\documentclass[11pt]{article}

\usepackage{mathrsfs}
\usepackage{enumerate}
\usepackage[section]{algorithm}
\usepackage{algorithmic}
\usepackage{multirow}
\usepackage{xcolor}

\usepackage{graphicx}
\usepackage{amsmath,amsfonts,amsthm,amsbsy,amssymb}
\usepackage{fixmath}
\usepackage{amssymb,latexsym}

\usepackage{hyperref}
\usepackage{cleveref}

\def\be{\pmb{e}}

\def\bw{\pmb{w}}
\def\bx{\pmb{x}}

\def\bz{\pmb{z}}

\def\bone{\pmb{1}}

\def\bbO{\mathbb{O}}
\def\bbP{\mathbb{P}}
\def\bbR{\mathbb{R}}

\def\scrH{\mathscr{H}}

\def\scrL{\mathscr{L}}
\def\scrM{\mathscr{M}}

\def\scrR{\mathscr{R}}

\def\cR{{\cal R}}

\def\wtd{\widetilde}
\def\what{\widehat}

\usepackage{accents}
\newcommand\munderbar[1]{%
  \underaccent{\bar}{#1}}

\DeclareMathOperator{\diag}{diag}

\DeclareMathOperator*{\opt}{opt}
\DeclareMathOperator{\rank}{rank}
\DeclareMathOperator{\rmd}{d}

\DeclareMathOperator{\tr}{tr}

\DeclareMathOperator{\F}{F}
\DeclareMathOperator{\HH}{H}
\DeclareMathOperator{\T}{T}

\DeclareMathOperator{\KKT}{KKT}

\DeclareMathOperator{\OptSM}{OptSM}

\def\scrL{\mathscr{L}}
\def\scrR{\mathscr{R}}

\newtheorem{theorem}{Theorem}[section]
\newtheorem{lemma}{Lemma}[section]

\theoremstyle{definition}

\newtheorem{remark}{Remark}[section]

\allowdisplaybreaks

\numberwithin{equation}{section}
\numberwithin{figure}{section}
\numberwithin{table}{section}
\title{NEPv Approach for Optimization on Stiefel Manifold with the $(2,1)$-norm Regularization}

\author{
Ren-Cang Li%
\thanks{Department of Mathematics, University of Texas at Arlington, Arlington, TX 76019-0408, USA.
        Supported in part by NSF DMS-2407692.
        Email: {\tt rcli@uta.edu}.}
\and Li Wang%
\thanks{Department of Mathematics, University of Texas at Arlington, Arlington, TX 76019-0408, USA.
        Supported in part by NSF DMS-2407692.
        Email: {\tt li.wang.edu}.}
\and Lei-Hong Zhang%
\thanks{School of Mathematical Sciences, Soochow University, Suzhou 215006, Jiangsu, China.
 Supported in part by the National Natural Science Foundation of China (NSFC-12471356, NSFC-12371380), Jiangsu Shuangchuang Project (JSSCTD202209),  Academic Degree and Postgraduate Education Reform Project of Jiangsu Province, and China Association of Higher Education under grant 23SX0403.
        Email: {\tt longzlh@suda.edu.cn}.}
 \and
Zhaojun Bai%
\thanks{Department of Computer Science and Department of Mathematics, University of California, Davis, CA 95616 USA.
        Email: {\tt zbai@ucdavis.edu}.}
}

\date{March 8, 2025
}

\begin{document}

\maketitle

\centerline{\em In memory of Professor Nicholas John Higham}

\begin{abstract}
Row-sparse projection provides a useful tool in machine learning (ML) when it comes to, for example, feature selection, aiming
to choose most relevant features for various ML objectives. One way to seek a high quality row-sparse projection is to combine
an ML objective, such as the ones for PCA, LDA, and OCCA, with the matrix $(2,1)$-norm regularization  which is nonsmooth.
Such combinations result in challenging optimization problems on the Stiefel manifold that need to be solved
efficiently. In this paper, a unifying NEPv framework is established to
efficiently deal with optimization on the Stiefel manifold with the $(2,1)$-norm regularization.
The effect of the $(2,1)$-norm regularization is also investigated. The wide applicability
of the framework
is demonstrated through the combinations of common learning objectives in today's data science applications with
the $(2,1)$-norm regularization.
Numerical experiments are presented to illustrate the use of the NEPv approach and
to gain insights as to what a proper regularizing parameter should have in real-world applications.
%able to achieve for applications such as
%feature selections.

%show the numerical behavior of the NEPv approach as regularizing parameter varies and
%thereby offer suggestions as
%. , providing
%a needed understanding of the regularization

\bigskip
\noindent
{\bf Keywords:}
Feature selection,
CCA,
MAXBET,
LDA,
$(2,1)$-norm regularization,
Sparse projection,
NEPv

\smallskip
\noindent
{\bf Mathematics Subject Classification}  58C40; 65F30; 65H17; 65K05; 90C26; 90C32
\end{abstract}

\clearpage
\tableofcontents

\clearpage
\section{Introduction}\label{sec:intro}
The matrix $(2,1)$-norm is often used as an effective regularizer to induce a row-sparse projection
in single and multi-view learning for the purpose of feature selection %\cite{lims:2023,walx:2024,lawa:2021,wrma:2022}.
\cite{fahw:2024,zhnl:2018}.
Given $P\in\bbR^{n\times k}$ (usually $k\ll n$), its $(2,1)$-norm is defined as
\begin{equation}\label{eq:(2,1)-norm:defn}
\|P\|_{2,1}=\sum_{i=1}^n\|P_{(i,:)}\|_2,
\end{equation}
where $P_{(i,:)}$ denotes the $i$th row of $P$ and $\|P_{(i,:)}\|_2$ is its vector 2-norm. In this paper, we are concerned with
the NEPv framework for
optimization on Stiefel manifold with the $(2,1)$-norm regularization. Specifically, we are interested
in solving
\begin{equation}\label{eq:OptSTM+L21}
\OptSM_{2,1}:\qquad\max_{P\in\bbO^{n\times k}}\Big\{f(P):=g(P)-\alpha\|P\|_{2,1}\Big\},
\end{equation}
where $g(\cdot)$ is  defined on some neighborhood of the Stiefel manifold
\begin{equation}\label{eq:STMkn}
\bbO^{n\times k}:=\{P\in\bbR^{n\times k}\,:\,P^{\T}P=I_k\}
\end{equation}
and is smooth, and $I_k$ is the $k\times k$ identity matrix.
It is noted that
$\|P\|_{2,1}$ is not differentiable as soon as one or more rows $P_{(i,:)}$ become zero rows, making
$\OptSM_{2,1}$~\eqref{eq:OptSTM+L21} a non-smooth optimization problem.

The ubiquity  of problem $\OptSM_{2,1}$~\eqref{eq:OptSTM+L21} lies in that
$g(P)$ can be taken to be any of today's common objectives
in data science such as
\begin{equation}\label{eq:obj-common}
\frac {\tr(P^{\T}A_2P)}{\tr(P^{\T}A_1P)}, \quad
\left[\frac {\tr(P^{\T}D)}{\sqrt{\tr(P^{\T}AP)}}\right]^2, \quad
\sum_{i=1}^N\|P^{\T}A_iP\|_{\F}^2, \quad
\left[\frac {\tr(P^{\T}A_2P)+\tr(P^{\T}D)}{[\tr(P^{\T}A_1P)]^{\theta}}\right]^2.
\end{equation}
These four objective functions are
for the orthogonal linear discriminant analysis (OLDA)
        \cite{cazb:2018,fish:1936,ngbs:2010,zhln:2010,zhln:2013},
the orthogonal canonical correlation analysis (OCCA)
        \cite{cugh:2015,zhwb:2022},
the uniform multidimensional scaling (UMDS) \cite{zhzl:2017},
and the $\theta$-trace ratio ($\Theta$TR) for multiview learning \cite{wazl:2023}, respectively.
It is noted that,  in \eqref{eq:obj-common}, we square the objectives of the original OCCA \cite{zhwb:2022} and $\Theta$TR \cite{wazl:2023}
for a reason that will become clear later in \cref{sec:CEgs}.
Given such ubiquity, a comprehensive investigation in terms of theory and computations for $\OptSM_{2,1}$~\eqref{eq:OptSTM+L21} is of high importance.
Our goal in this paper are two-fold:
\begin{enumerate}[(1)]
  \item Establish an NEPv  framework to solve $\OptSM_{2,1}$~\eqref{eq:OptSTM+L21} where $g(P)$ is
        some convex composition of atomic functions for NEPv recently introduced in \cite{li:2024}.
        Special attention will be given to those atomic functions: $\tr((P^{\T}AP)^m)$ and $\tr((P^{\T}D)^m)$
        that appear ubiquitously in subspace learning.
  \item Explain how the framework works for the common machine learning objectives, such as those in \eqref{eq:obj-common},
      from
        MAXBET \cite{wazl:2022a,zhys:2020} and the unbalanced Procrustes problem \cite{chtr:2001,edas:1999,elpa:1999,godi:2004,huca:1962,zhdu:2006},
        LDA \cite{cazb:2018,zhln:2010,zhln:2013}, OCCA \cite{cugh:2015,zhwb:2022}, and
        $\Theta$-trace ratios \cite{wazl:2023}, among others.
        %Machine learning examples will be provided to demonstrate the effectiveness of the NEPv framework.
\end{enumerate}

The rest of this paper is organized as follows. In \cref{sec:NEPv-rev}, we provide a brief review of the NEPv
framework in \cite{li:2024} to set up the stage for \cref{sec:NEPv+L21} in which we lay out our unifying NEPv framework
for optimization on Stiefel manifold with the $(2,1)$-norm regularization, including a theory and an SCF iteration.
In \cref{sec:LOCG}, we explain an LOCG (locally optimal conjugate gradient) type acceleration of the SCF iteration.
The effect of the $(2,1)$-norm regularization as  regularizing parameter $\alpha$ moves towards infinite is investigated in
\cref{sec:EffL21} and the results can and will be used to explain numerical behaviors of the NEPv approach in \cref{sec:egs}.
In \cref{sec:CEgs}, we explain how to apply the newly established NEPv framework to a few  objectives commonly used
in data science today and regularized by the matrix $(2,1)$-norm for generating row-sparse projections.
Numerical experiments are reported in \cref{sec:egs} to
%onfirm our claims in the earlier sections as well as
demonstrate the efficiency of the approach and its effectiveness in producing row sparse solutions.
Finally, we draw our conclusions in \cref{sec:concl}.

{\bf Notation.}
%\begin{itemize}
%  \item
  $\bbR^{m\times n}$  is the set of $m\times n$ real matrices,  $\bbR^n=\bbR^{n\times 1}$, and $\bbR=\bbR^1$;
        $\bbR_+:=\{x\in\bbR\,:\,x\ge 0\}$ and $\bbR_{++}:=\{x\in\bbR_+\,:\,x> 0\}$.
        %$\bbC^{m\times n}$,  $\bbC^n$, and $\bbC$ except for the complex numbers;
%  \item $\STM{k}{n}$  denotes the Stiefel manifold
%        $$
%        \bbO^{n\times k}=\{P\in\bbR^{n\times k}\,:\,P^{\T}P=I_k\}\subset\bbC^{n\times k};
%        $$
%  \item
  $I_n\in\bbR^{n\times n}$ is the identity matrix or simply $I$ if its size is clear from the context, and $\be_j$ is the $j$th column of $I$ of an apt size. By some neighborhood of the Stiefel manifold $\bbO^{n\times k}$, we mean
\begin{equation}\label{eq:O-delta}
\bbO^{n\times k}_{\delta}:=\{P\in\bbR^{n\times k}\,:\,\|P^{\T}P-I_k\|<\delta\},
\end{equation}
where $0<\delta$ is a constant and $\|\cdot\|$ is some matrix norm.
 % \item

  For $B\in\bbR^{m\times n}$, $B^{\T}$ stands for its transpose (as a matrix/vector),
  %$\cR(B)$ is the column subspace of a matrix $B$, spanned by its columns, whose dimension is
      $\rank(B)$ its rank and
      $B_{(i,:)}$ is its $i$th row. For $A\in\bbR^{n\times n}$, $A\succ 0\, (\succeq 0)$ means that it is symmetric and positive definite (semi-definite), and
        accordingly
        $A\prec 0\, (\preceq 0)$ if $-A\succ 0\, (\succeq 0)$.

%  \item
  The {\em thin\/} SVD of $B\in\bbR^{m\times n}$ ($m\ge n$) is $B=U\Sigma V^{\T}$ where
        $$
        \Sigma=\diag(\sigma_1(B),\sigma_2(B),\ldots,\sigma_n(B))\in\bbR^{n\times n},
        \,\,
        U\in\bbO^{m\times n},\,\,
        V\in\bbO^{n\times n}.
        $$
        %and $s=\min\{m,n\}$.
        The singular values $\sigma_j(B)$  are always arranged decreasingly as
        $
        \sigma_1(B)\ge\cdots\ge\sigma_n(B)\ge 0.
        $
        %and $\sigma_{\min}(B)=\sigma_n(B)$;
        Accordingly, $\|B\|_2$, $\|B\|_{\F}$, and $\|B\|_{\tr}$ are the spectral, Frobenius, and
        trace         norms of $B$:
        $$
        \|B\|_2=\sigma_1(B),\,\,
        \|B\|_{\F}=\Big(\sum_{i=1}^n[\sigma_i(B)]^2\Big)^{1/2},\,\,
        \|B\|_{\tr}=\sum_{i=1}^n\sigma_i(B),
        $$
        respectively. The trace norm is  also known as the {\em nuclear norm}. Unless otherwise explicitly stated, SVD always
        refers to {\em thin\/} SVD in this paper.
        With the SVD $B=U\Sigma V^{\T}$, a polar decomposition of $B$ is given by $B=QH$ \cite{high:2008} where $Q=UV^{\T}$
        and $H=V^{\T}\Sigma V$,  and
        $Q$ is called  an {\em orthogonal polar factor\/} and it is unique if $\rank(B)=n$ \cite{li:1993b,li:2014HLA}.

\section{The NEPv approach, a brief review}\label{sec:NEPv-rev}
In \cite{li:2024}, a unifying framework for the NEPv approach is established to solve
\begin{equation}\label{eq:OptSTM}
\OptSM:\max_{P\in\bbO^{n\times k}} g(P),
\end{equation}
i.e., $\OptSM_{2,1}$~\eqref{eq:OptSTM+L21} without the
$(2,1)$-norm regularizing term $-\alpha\|P\|_{2,1}$,
everything else being equal. It is a smooth optimization problem since $g(\cdot)$ is assumed to be smooth
on some  neighborhood $\bbO^{n\times k}_{\delta}$ of the Stiefel manifold.

\subsection{The NEPv Ansatz}
The NEPv framework established in \cite{li:2024} is built upon the following Ansatz  where
a subset $\bbP$ of $\bbO^{n\times k}$ is involved and has to be determined dependently of $g(P)$, such as the one in \Cref{tbl:AF-NEPv} later when $g(P)$ involves matrix trace function $\tr((P^{\T}D)^m)$.

\smallskip\noindent
{\bf The NEPv Ansatz.}
{\em
For function $g$ defined in some neighborhood $\bbO_{\delta}^{n\times k}$ of the Stiefel manifold $\bbO^{n\times k}$,
%maximization problem \eqref{eq:main-opt},
there is a
%Consider maximization problem \eqref{eq:main-opt}.
%There is
symmetric matrix-valued function $H(P)\in\bbR^{n\times n}$ such that for
$\what P\in\bbO^{n\times k},\,P\in\bbP\subseteq\bbO^{n\times k}$,
if
$$
\tr(\what P^{\T}H(P)\what P)\ge\tr(P^{\T}H(P)P)+\eta
\quad\mbox{for some $\eta\in\bbR$},
$$
then
there exists $Q\in\bbO^{k\times k}$ such that $\wtd P=\what PQ\in\bbP$ and
$g(\wtd P)\ge g(P)+\omega\eta$,
where $\omega$ is some positive constant, independent of $P$ and $\what P$.
}

When this Ansatz is satisfied, the SCF iteration:
      \begin{equation}\label{eq:SCF-form:NEPv:intro}
      \framebox{
      \parbox{12.0cm}{given $P_0$, iteratively
      compute partial eigendecomposition $H(P_{i-1})\what P_i=\what P_i\Omega_i$
      associated with the $k$ largest  eigenvalues of $H(P_{i-1})$
                  for $\what P_i\in\bbO^{n\times k}$,  and postprocess $\what P_i$ to $P_i$.
      }}
      \end{equation}
can be called to solve $\OptSM$~\eqref{eq:OptSTM} and it is guaranteed that $\{g(P_i)\}_{i=0}^{\infty}$
is monotonic increasing, and any accumulation point of $\{P_i\}_{i=0}^{\infty}$ is
a solution of the following nonlinear eigenvalue problem with eigenvector dependency (NEPv)
\begin{equation}\label{eq:OptSTM:NEPv}
H(P)\,P=P\Omega, \quad P\in\bbO^{n\times k},\quad \Omega=\Omega^{\T}\in\bbR^{k\times k}.
\end{equation}
%while objective value $g(P_i)$ monotonically increases.
It turns out that for the
objectives,  previously appeared in the literature, such as those in \eqref{eq:obj-common},
{\bf the NEPv Ansatz} is indeed fulfilled, albeit unawarely when they were first studied before the emergence of
the Ansatz in \cite{li:2024}. In fact, each of these objectives was investigated individually, but, now with the Ansatz,
they can be collectively investigated together under one umbrella \cite{li:2024}.

We notice that, in SCF \eqref{eq:SCF-form:NEPv:intro}, there is an unsettling postprocessing step from $\what P_i$ to $P_i$,
which precisely is due to the existence of the $Q$-matrix in the Ansatz to go from $\what P$ to $\wtd P$. More comments on
this issue will come after \Cref{alg:NEPvSCF4+L21} and in \cref{sec:CEgs}.

%We notice that, in SCF \eqref{eq:SCF-form:NEPv:intro}, there is an unsettling postprocessing step from $\what P_i$ to $P_i$, which precisely is about
%the existence of a $Q$-matrix in the Ansatz to go from $\what P$ to $\wtd P$. For all the concrete objectives
%later in \cref{sec:CEgs}, either $P_i=\what P_i$ or $P_i=\what P_iQ_i$ where $Q_i$ is an orthogonal polar factor
%of $\what P_i^{\T}D$ when $\tr((P^{\T}D)^m)$ is involved in the objective of interest.

The first order optimality condition, also known as the KKT condition, of $\OptSM$~\eqref{eq:OptSTM}
is given by \cite{abms:2008,li:2024}
\begin{equation}\label{eq:KKT}
\frac{\partial g(P)}{\partial P}=P\Lambda
\quad \mbox{with} \quad
\Lambda^{\T}=\Lambda\in\bbR^{k\times k},\quad P\in\bbO^{n\times k}.
\end{equation}
In \cite[Theorem~6.1]{li:2024}, sufficient and necessary conditions are also given, as to the equivalency
between NEPv~\eqref{eq:OptSTM:NEPv} and the KKT condition~\eqref{eq:KKT}. It is noted that \Cref{thm:H(P)-eligibility}
below is independent of {\bf the NEPv Ansatz}.

\begin{theorem}[{\cite[Theorem~6.1]{li:2024}}]\label{thm:H(P)-eligibility}
Let $H(P)\in\bbR^{n\times n}$ be a symmetric matrix-valued function for $P\in\bbO^{n\times k}$. Suppose that
\begin{equation}\label{eq:cond:KKT=NEPv}
H(P)P-\frac{\partial g(P)}{\partial P}=P\,\scrM(P)\quad\mbox{for $P\in\bbO^{n\times k}$},
\end{equation}
where $\scrM(P)\in\bbR^{k\times k}$ is some matrix-valued function.
$P\in\bbO^{n\times k}$ is a solution to the KKT condition \eqref{eq:KKT} if and only if
it is a solution to NEPv \eqref{eq:OptSTM:NEPv}
and $\scrM(P)$ is  symmetric.
\end{theorem}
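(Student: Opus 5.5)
The plan is to rewrite condition \eqref{eq:cond:KKT=NEPv} as
\[
\frac{\partial g(P)}{\partial P}=H(P)P-P\,\scrM(P),\qquad P\in\bbO^{n\times k},
\]
and then prove the two implications separately. The only facts needed are $P^{\T}P=I_k$ and the symmetry of $H(P)$.

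\emph{KKT $\Rightarrow$ NEPv with $\scrM(P)$ symmetric.} Suppose $P\in\bbO^{n\times k}$ satisfies \eqref{eq:KKT} with a symmetric $\Lambda$. Substituting the rewritten identity gives $H(P)P=P(\Lambda+\scrM(P))$. Premultiplying by $P^{\T}$ and using $P^{\T}P=I_k$ gives
\[
\Omega:=P^{\T}H(P)P=\Lambda+\scrM(P).
\]
The matrix $\Omega$ is symmetric because $H(P)$ is. Hence $H(P)P=P\Omega$ with $\Omega=\Omega^{\T}$, which is NEPv \eqref{eq:OptSTM:NEPv}. Moreover $\scrM(P)=\Omega-\Lambda$ is a difference of symmetric matrices, so it is symmetric.

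\emph{NEPv with $\scrM(P)$ symmetric $\Rightarrow$ KKT.} Suppose $H(P)P=P\Omega$ with $\Omega$ symmetric and $\scrM(P)$ symmetric. Then
\[
\frac{\partial g(P)}{\partial P}=H(P)P-P\scrM(P)=P\bigl(\Omega-\scrM(P)\bigr).
\]
Setting $\Lambda:=\Omega-\scrM(P)$ gives a symmetric matrix satisfying \eqref{eq:KKT}.

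I do not expect any step to be a genuine obstacle. The only delicate point is in the forward direction: there $\Omega$ must be identified as $P^{\T}H(P)P$, so that its symmetry comes from $H(P)$ and not from an assumption. Symmetry of $\scrM(P)$ is then a conclusion in that direction, while in the converse it is a hypothesis.
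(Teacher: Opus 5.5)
Your proof is correct and is essentially the standard direct argument behind this cited result; the paper itself gives no proof and only refers to the source. In the forward direction you substitute the identity into the KKT condition and premultiply by $P^{\T}$ to get $\Omega=P^{\T}H(P)P=\Lambda+\scrM(P)$, so $\scrM(P)=\Omega-\Lambda$ is symmetric; in the converse you take $\Lambda=\Omega-\scrM(P)$.
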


\subsection{Atomic function}
In \cite{li:2024}, a theory of atomic function for NEPv
is invented to generate a large collection of objectives that satisfy the Ansatz. Specifically,
we call a function $g$ defined in some neighborhood $\bbO_{\delta}^{n\times k}$ {\em an atomic function
for NEPv\/} if there exists a symmetric matrix-valued function $H(P)\in\bbR^{n\times n}$
%
%Suppose that for  function $f$ defined on some neighborhood $\bbO^{n\times k}_{\delta}$, we have symmetric matrix-valued function $H(P)$ that satisfies condition~\eqref{eq:cond:KKT=NEPv}.
%\marginpar{\tiny updated}
such that
\begin{subequations}\label{eq:cond4AF-NEPv}
\begin{align}
\tr(P^{\T}H(P)P)&=\munderbar\gamma\, g(P)\quad\mbox{for $P\in\bbP\subseteq\bbO^{n\times k}$},
              \label{eq:cond4AF-NEPv-a} \\
\intertext{and given $P\in\bbP$ and $\what P\in\bbO^{n\times k}$, there exists $Q\in\bbO^{k\times k}$ such
that $\wtd P=\what PQ\in\bbP$ and}
\tr(\what P^{\T}H(P)\what P)&\le\munderbar\alpha \,g(\wtd P)+\munderbar\beta \,g(P),
           %\quad\mbox{for $P\in\bbP\subseteq\bbO^{n\times k}$},
      \label{eq:cond4AF-NEPv-b}
\end{align}
\end{subequations}
where $\munderbar\alpha>0,\,\munderbar\beta\ge 0$,  and $\munderbar\gamma=\munderbar\alpha+\munderbar\beta$ are constants.
In particular, it is shown that
both
%\footnote {More generally $\tr((P^{\T}D)^m)$ and $\tr((P^{\T}AP)^m)$, with appropriate $H(P)$, are atomic functions for NEPv.
%       Since we won't use them in our concrete examples, we omit such generality.}
$\tr((P^{\T}D)^m)$ and $\tr((P^{\T}AP)^m)$
%\begin{align}
%\tr(P^{\T}D) &\quad\mbox{with}\quad H(P)=DP^{\T}+PD^{\T}\,\,\mbox{and}\,\,\bbP=\{P\in\bbO^{n\times k}\,:\,P^{\T}D\succeq 0\},
%         \label{eq:trPtD} \\
%\tr(P^{\T}AP) &\quad\mbox{with}\quad H(P)=2A\,\,\mbox{and}\,\,\bbP=\bbO^{n\times k}, \label{eq:trPtAP}
%\end{align}
are atomic functions for NEPv, with details shown in \Cref{tbl:AF-NEPv}
\cite[Theorems~7.4 and~7.5]{li:2024} where $m\ge 1$ is an integer.
We caution the reader that \eqref{eq:cond4AF-NEPv-b} may be an equality for some
atomic functions and that is significant.
It is shown that any atomic function for NEPv satisfies the Ansatz with $\omega=1/\munderbar\alpha$
\cite{li:2024}.

\iffalse
\begin{table}[t]
\renewcommand{\arraystretch}{1.3}
\caption{\small Atomic functions $\tr(P^{\T}D)$ and $\tr(P^{\T}AP)$}\label{tbl:AF-NEPv}
\centerline{\small
\begin{tabular}{|c|c|c|c|c|}
  \hline
             & $H(P)$ & $\bbP$ & $(\munderbar\alpha,\munderbar\beta)$ & \eqref{eq:cond4AF-NEPv-b} \\ \hline
$\tr(P^{\T}D)$  & $DP^{\T}+PD^{\T}$ & $\bbO_{D+}^{n\times k}:=\{P\in\bbO^{n\times k}\,:\,P^{\T}D\succeq 0\}$ & $(2,0)$ & inequality \\ \hline
$\tr(P^{\T}AP)$ & $2A$ & $\bbO^{n\times k}$ or $\bbO_{D+}^{n\times k}$ & $(2,0)$ & equality \\ \hline
\end{tabular}
}
\end{table}
\fi

\begin{table}[t]
\renewcommand{\arraystretch}{1.3}
\caption{\small Atomic functions $\tr((P^{\T}D)^m)$ and $\tr((P^{\T}AP)^m)$}\label{tbl:AF-NEPv}
\centerline{\small
\begin{tabular}{|c|c|c|c|c|}
  \hline
             & $m$ & $H(P)$ & $\bbP$ &  \eqref{eq:cond4AF-NEPv-b} \\ \hline
$\tr((P^{\T}D)^m)$ & $\ge 1$  & { $m\,\big[D(P^{\T}D)^{m-1}P^{\T}+P(D^{\T}P)^{m-1}D^{\T}\big]$}
                & $\bbO_{D+}^{n\times k}$ &  inequality \\ \hline
\multirow{2}{*}{$\tr((P^{\T}AP)^m)$} & $1$ & $2A$ & $\bbO^{n\times k}$  &  equality \\ \cline{2-5}
 & $>1$ & $2m\,A(PP^{\T}A)^{m-1}$ ($A\succeq 0$ required) & $\bbO^{n\times k}$  &  inequality \\ \hline
\multicolumn{5}{l}{\small * $\bbO_{D+}^{n\times k}:=\{P\in\bbO^{n\times k}\,:\,P^{\T}D\succeq 0\}$, and
                            $(\munderbar\alpha,\munderbar\beta)=(2,2(m-1))$ in \eqref{eq:cond4AF-NEPv} for all.}
\end{tabular}
}
\end{table}

\subsection{Convex composition}
Consider now a convex composition of atomic functions for NEPv:
\begin{equation}\label{eq:g-cvx-comp}
g(P)=\psi\circ T_0(P)
\quad\mbox{with}\quad
T_0(P)=\begin{bmatrix}
                                        g_1(P) \\
                                        g_2(P) \\
                                        \vdots \\
                                        g_N(P) \\
                                      \end{bmatrix},
\end{equation}
where $\psi\,:\,\mathfrak{D}\subseteq\bbR^N\to\bbR$ is
convex and differentiable, $T_0\,:\, P\in\bbO^{n\times k} \to T_0(P)\in\mathfrak{D}$, and
each $g_i(P)$ is an atomic function,
with some symmetric matrix-valued function $H_i(P)\in\bbR^{n\times n}$, such that:
for $1\le i\le N$
\begin{subequations}\label{eq:cond4AF-NEPv:cvx}
\begin{align}
\tr(P^{\T}H_i(P)P)&=\munderbar\gamma_i\, g_i(P)\quad\mbox{for $P\in\bbP\subseteq\bbO^{n\times k}$},
              \label{eq:cond4AF-NEPv:cvx-a} \\
\intertext{and given $\what P\in\bbO^{n\times k}$ and $P\in\bbP$, there exists $Q\in\bbO^{k\times k}$ such
that $\wtd P=\what PQ\in\bbP$ and} %\subseteq\bbO^{n\times k}$ and}
\tr(\what P^{\T}H_i(P)\what P)
   &\le\munderbar\alpha g_i(\wtd P)+\munderbar\beta_i g_i(P),
   % \quad\mbox{for $P,\,\wtd P\in\bbP\subseteq\bbO^{n\times k}$},
      \label{eq:cond4AF-NEPv:cvx-b}
\end{align}
\end{subequations}
%and $Q$ can be taken to be $I_k$ if $\bbP=\bbO^{n\times k}$,
where $\munderbar\alpha>0,\,\munderbar\beta_i\ge 0$,  and
$\munderbar\gamma_i=\munderbar\alpha+\munderbar\beta_i$ are  constants. It is noted that
there are three consistency assumptions among those atomic functions $g_i(\cdot)$:
1) the same $\bbP$, 2) the same $Q$ for given $P$ and $\what P$, and 3) the same
$\munderbar\alpha>0$.

Denote the partial derivatives of $\psi$
with respect to $\bx=[x_1,x_2,\ldots,x_N]^{\T}\in\mathfrak{D}\subseteq\bbR^N$ by
\begin{equation}\label{eq:psi-i}
\psi_i(\bx)=\frac {\partial \psi(\bx)}{\partial x_i}\quad\mbox{for $1\le i\le N$},
\end{equation}
and consider the symmetric matrix-valued function
\begin{equation}\label{eq:H(P)-comp-form}
H(P)=\sum_{i=1}^N\psi_i(T_0(P))\,H_i(P)
\end{equation}
and the associated NEPv \eqref{eq:OptSTM:NEPv} to go with $g(\cdot)$ in \eqref{eq:g-cvx-comp}.
%It is shown that $g(P)$ of \eqref{eq:g-cvx-comp}, with $H(P)$ as in \eqref{eq:H(P)-comp-form}, satisfies
%{\bf the NEPv Ansatz} with $\omega=1/\munderbar\alpha$
%if $\psi_i(\bx)\ge 0$ for those $i$ for which \eqref{eq:cond4AF-NEPv:cvx-b} does not become an equality.

\begin{theorem}\label{thm:main-NEPv-cvx}
Consider $g$ as in \eqref{eq:g-cvx-comp} and $\psi$ is convex and differentiable with partial derivatives denoted by
$\psi_i$ as in \eqref{eq:psi-i}. Let $H(P)$ be given by \eqref{eq:H(P)-comp-form} with $H_i(P)$ for $1\le i\le N$
satisfying \eqref{eq:cond4AF-NEPv:cvx}.
\begin{enumerate}[{\rm (a)}]
  \item {\rm \cite[Theorem 8.1]{li:2024}} If $\psi_i(\bx)\ge 0$ for those $i$ for which \eqref{eq:cond4AF-NEPv:cvx-b} does not become an equality, then
        {\bf the NEPv Ansatz}  holds   with $\omega=1/\munderbar\alpha$.
         %for $g=\psi\circ T_0$ with $H(\cdot)$ in \eqref{eq:H(P)-comp-form}.
  \item {\rm \cite[Theorem 6.3]{li:2024}}
        Let $P_*\in\bbP$ be an maximizer of $\OptSM$~\eqref{eq:OptSTM}. Then $P_*$ satisfies
        NEPv \eqref{eq:OptSTM:NEPv}, and the eigenvalues of $\Omega_*:=P_*^{\T}H(P_*)P_*$ are
        the $k$ largest eigenvalues of $H(P_*)$.
\end{enumerate}
\end{theorem}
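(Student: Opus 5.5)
For part (a), I would fix $P\in\bbP$ and $\what P\in\bbO^{n\times k}$ with $\tr(\what P^{\T}H(P)\what P)\ge\tr(P^{\T}H(P)P)+\eta$. I would then take the common $Q$ supplied by \eqref{eq:cond4AF-NEPv:cvx-b}; the consistency assumptions guarantee it is the same for every $i$. This gives $\wtd P=\what PQ\in\bbP$. Write $\bx=T_0(P)$ and $\wtd\bx=T_0(\wtd P)$, and abbreviate $\psi_i:=\psi_i(\bx)$.

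Expanding $H(P)$ by \eqref{eq:H(P)-comp-form} and using \eqref{eq:cond4AF-NEPv:cvx-a} gives
$\tr(P^{\T}H(P)P)=\sum_i\psi_i\munderbar\gamma_i x_i$. For the other side I would multiply \eqref{eq:cond4AF-NEPv:cvx-b} by $\psi_i$. This is legitimate when $\psi_i\ge0$, and also for an index of either sign when \eqref{eq:cond4AF-NEPv:cvx-b} is an equality. This sign bookkeeping is the one delicate point, and it is exactly what the hypothesis of (a) is designed to handle. The result is
\[
\tr(\what P^{\T}H(P)\what P)\le\sum_i\psi_i\big(\munderbar\alpha\wtd x_i+\munderbar\beta_i x_i\big).
\]
Combining the two displays with the assumed inequality and using $\munderbar\gamma_i=\munderbar\alpha+\munderbar\beta_i$, the $\munderbar\beta_i$ terms cancel and we get
\[
\munderbar\alpha\sum_i\psi_i(\bx)(\wtd x_i-x_i)\ge\eta .
\]
Convexity and differentiability of $\psi$ then give $\psi(\wtd\bx)-\psi(\bx)\ge\nabla\psi(\bx)^{\T}(\wtd\bx-\bx)\ge\eta/\munderbar\alpha$. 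Hence $g(\wtd P)\ge g(P)+\eta/\munderbar\alpha$, which is the Ansatz with $\omega=1/\munderbar\alpha$.

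For part (b), let $P_*\in\bbP$ be a maximizer and let $\what P$ consist of orthonormal eigenvectors for the $k$ largest eigenvalues of $H(P_*)$. I would set $\eta=\tr(\what P^{\T}H(P_*)\what P)-\tr(P_*^{\T}H(P_*)P_*)$, which is nonnegative by Ky Fan's maximum principle. Part (a) then yields $\wtd P$ with $g(\wtd P)\ge g(P_*)+\eta/\munderbar\alpha$. Maximality of $P_*$ forces $\eta\le0$, so $\eta=0$. Thus $P_*$ attains the Ky Fan maximum, so $\tr(P_*^{\T}H(P_*)P_*)$ equals the sum of the $k$ largest eigenvalues.

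To conclude I would invoke the standard equality case of Ky Fan: equality means $\mathcal{R}(P_*)$ is an invariant subspace of $H(P_*)$ spanned by eigenvectors of the top $k$ eigenvalues, counting ties appropriately. Hence $H(P_*)P_*=P_*\Omega_*$ with $\Omega_*=P_*^{\T}H(P_*)P_*$, and the eigenvalues of $\Omega_*$ are the $k$ largest eigenvalues of $H(P_*)$. The main obstacle is careful handling of ties at the $k$th eigenvalue in this equality case. I would resolve it by writing $P_*$ in the eigenbasis and noting that any weight outside the top $k$ eigenvalues strictly lowers the trace unless those eigenvalues equal the $k$th one.
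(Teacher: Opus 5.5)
Your proposal is correct and follows essentially the same route as the results the paper quotes without reproof, \cite[Theorems~8.1 and~6.3]{li:2024}. For (a), you weight the atomic inequalities by $\psi_i(\bx)$, cancel the $\munderbar\beta_i$ terms via $\munderbar\gamma_i=\munderbar\alpha+\munderbar\beta_i$, and close with the gradient inequality for convex $\psi$. For (b), you apply the Ansatz with $\eta$ equal to the Ky Fan gap, force $\eta=0$ by maximality, and read off both the NEPv and the top-$k$ eigenvalues from the equality case of Ky Fan's principle.

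One point worth making explicit: (b), as written, tacitly inherits the sign hypothesis of (a), so routing your proof of (b) through (a), as you do, is exactly right. Without that hypothesis the top-$k$ conclusion can fail. For example, take $g(P)=-\tr((P^{\T}AP)^2)$ with $A=\diag(2,1)$ and $k=1$: the maximizer $P_*=\be_2$ corresponds to the smaller eigenvalue of $H(P_*)=-4\be_2\be_2^{\T}$.
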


\section{The NEPv framework for $(2,1)$-norm regularization}\label{sec:NEPv+L21}
We return to $\OptSM_{2,1}$~\eqref{eq:OptSTM+L21}  with $g(P)$ as given in \eqref{eq:g-cvx-comp}
satisfying \eqref{eq:cond4AF-NEPv:cvx} with $\munderbar\alpha=2$.
Rewrite
$$
\|P\|_{2,1}=\sum_{i=1}^n\|\be_i^{\T}P\|_2
    =\sum_{i=1}^n\sqrt{\be_i^{\T}PP^{\T}\be_i}
    =\sum_{i=1}^n\sqrt{\tr(P^{\T}\be_i\be_i^{\T}P)},
$$
and hence
\begin{subequations}\label{eq:OptSTM+L21:-obj}
\begin{align}
f(P)=g(P)-\alpha\|P\|_{2,1}&=g(P)-\alpha\sum_{i=1}^n\sqrt{\tr(P^{\T}\be_i\be_i^{\T}P)} \label{eq:OptSTM+L21:-obj-1} \\
    &=\phi\circ T(P), \label{eq:OptSTM+L21:-obj-2}
\end{align}
where, for $\bx=[x_i]\in\bbR^{N+n}$,
\begin{equation}\label{eq:OptSTM+L21:-obj-3}
\phi(\bx)=\psi(\bx_{(1:N)})-\alpha\sum_{i=1}^n\sqrt{x_{N+i}}, \quad
T(P)=\begin{bmatrix}
       T_0(P) \\
       \tr(P^{\T}\be_1\be_1^{\T}P) \\
       \vdots \\
       \tr(P^{\T}\be_n\be_n^{\T}P)
     \end{bmatrix}
     \equiv\begin{bmatrix}
       g_1(P) \\
                                        g_2(P) \\
                                        \vdots \\
                                        g_N(P) \\
       \tr(P^{\T}\be_1\be_1^{\T}P) \\
       \vdots \\
       \tr(P^{\T}\be_n\be_n^{\T}P)
     \end{bmatrix},
\end{equation}
\end{subequations}
and $\bx_{(1:N)}\in\bbR^N$ is the subvector of $\bx$, consisting of its first $N$ components.

Since $\psi(\bx_{(1:N)})$ is convex in $\bx_{(1:N)}\in\mathfrak{D}\subseteq\bbR^N$ by assumption and
$-\alpha\sqrt{x}$ is convex in $\bbR_+$ because of
$$
\frac {\rmd^2 (-\alpha\sqrt{x})}
      {\rmd x^2}=\frac 14\alpha x^{-3/2}\ge 0,
$$
we find that $\phi(\bx)$ is convex for $\bx\in\mathfrak{D}\times\bbR_+^n$. Already, $g_i(P)$ for $1\le i\le N$
are assumed atomic functions that satisfy \eqref{eq:cond4AF-NEPv:cvx} with $\munderbar\alpha=2$ and we point out that
each $\tr(P^{\T}\be_i\be_i^{\T}P)$ is also atomic function with $H_{N+i}(P)=2\be_i\be_i^{\T}$ and with $\munderbar\alpha=2$,
according to \Cref{tbl:AF-NEPv}. Hence the NEPv approach reviewed in \cref{sec:NEPv-rev}
may be applicable, with symmetric matrix-valued function
\begin{equation}\label{eq:H(P)-comp-form+L21}
H(P)=\sum_{i=1}^N\phi_i(T(P))\,H_i(P)-\alpha\sum_{i=1}^n\frac {\be_i\be_i^{\T}}{\sqrt{\tr(P^{\T}\be_i\be_i^{\T}P)}}
   \in\bbR^{n\times n}.
\end{equation}
However, there is one obstacle when it comes to the application, i.e.,
each $\sqrt{\tr(P^{\T}\be_i\be_i^{\T}P)}$ is not
differentiable in $P$ when $\tr(P^{\T}\be_i\be_i^{\T}P)=0$. This is also reflected
in the expression of $H(P)$ in \eqref{eq:H(P)-comp-form+L21}
where $\sqrt{\tr(P^{\T}\be_i\be_i^{\T}P)}$ for $1\le i\le n$ appear in the denominators.
A singularity occurs whenever one or more of $\tr(P^{\T}\be_i\be_i^{\T}P)$ are $0$.
Even if such a singularity does not occur, near singularity is bound to happen,
after all the precise purpose of introducing
the (2,1)-norm regularization is to promote row sparsity in $P$ in the first place, i.e., making some of the rows of $P$
extremely small, i.e., some $\|\be_i^{\T}P\|_2\ll 1$, if not exactly $0$.
When $\|\be_i^{\T}P\|_2\ll 1$, it means that, from the feature selection point of view,
the $i$th feature is insignificant and may be deselected. In practice,
this can be realized by pre-selecting a small tolerance
$\varepsilon_0$ and then regarding any rows such that
$\|\be_i^{\T}P\|_2\le\varepsilon_0$ insignificant. How small should $\varepsilon_0$ be? Since
$$
\sum_{i=1}^n\|\be_i^{\T}P\|_2^2=k
\quad\Rightarrow\quad
\frac kn\le\max_{1\le i\le n}\|\be_i^{\T}P\|_2^2\le 1.
$$
For data science applications, $\varepsilon_0=10^{-3}\sqrt{k/n}$ should be good enough.
%Usually $\varepsilon_0$ about $10^{-3}\sim 10^{-5}$ is sufficient for any data science application.

Near singularity creates numerical difficulty.
In view of these discussions, instead of $\OptSM_{2,1}$~\eqref{eq:OptSTM+L21} with $f(P)$ as in \eqref{eq:OptSTM+L21:-obj}, we may solve
a perturbed problem as follows:
\begin{equation}\label{eq:OptSTM+L21:eps}
\OptSM_{2,1}^{\varepsilon}:\qquad
\max_{P\in\bbO^{n\times k}} \left\{ f_{\varepsilon_0}(P):=g(P)-\alpha\sum_{i=1}^n\sqrt{\tr(P^{\T}\be_i\be_i^{\T}P)+\varepsilon_0^2}\right\},
\end{equation}
again with $g(P)$ as in \eqref{eq:g-cvx-comp} satisfying \eqref{eq:cond4AF-NEPv:cvx}.
With the same $T(P)$ as in \eqref{eq:OptSTM+L21:-obj-3}, we find that
\begin{equation}\label{eq:feps->form}
f_{\varepsilon_0}(P)=\phi_{\varepsilon_0}\circ T(P)
\quad\mbox{with}\quad
\phi_{\varepsilon_0}(\bx)=\psi(\bx_{(1:N)})-\alpha\sum_{i=1}^n\sqrt{x_{N+i}+\varepsilon_0^2}.
\end{equation}
%Function $\phi_{\varepsilon_0}(\bx)$ is still convex for for $\bx\in\mathfrak{D}\times\bbR_+^n$.
%Accordingly, we have
%\begin{subequations}\label{eq:OptSTM+L21:KKT'-eps}
%\begin{equation}\label{eq:OptSTM+L21:scrH-eps}
%\scrH_{\varepsilon_0}(P):=\frac {\partial f_{\varepsilon_0}(P)}{\partial P}
%    =-2AP+2D-\alpha\sum_{i=1}^n\frac {\be_i\be_i^{\T}P}{\sqrt{\tr(P^{\T}\be_i\be_i^{\T}P)+\varepsilon_0^2}},
%\end{equation}
%and the first order optimality condition of \eqref{eq:OptSTM+L21:eps}
%\cite[section~2]{li:2024}
%\begin{equation}\label{eq:OptSTM+L21:KKT-eps}
%\scrH_{\varepsilon_0}(P)=P\Lambda, \quad P\in\bbO^{n\times k},\quad \Lambda=\Lambda^{\T}\in\bbR^{k\times k}.
%\end{equation}
%\end{subequations}
Correspondingly, we use the following symmetric matrix-valued function
\begin{subequations}\label{eq:OptSTM+L21:NEPv'-eps}
\begin{equation}\label{eq:OptSTM+L21:H(P)-eps}
H_{\varepsilon_0}(P)=\sum_{i=1}^N\psi_i(T_0(P))\,H_i(P)
   -\alpha\sum_{i=1}^n\frac {\be_i\be_i^{\T}}{\sqrt{\tr(P^{\T}\be_i\be_i^{\T}P)+\varepsilon_0^2}}
   \in\bbR^{n\times n},
\end{equation}
whose associated NEPv is
\begin{equation}\label{eq:OptSTM+L21:NEPv-eps}
H_{\varepsilon_0}(P)\,P=P\Omega, \quad P\in\bbO^{n\times k},\quad \Omega=\Omega^{\T}\in\bbR^{k\times k}.
\end{equation}
\end{subequations}
Also note that the KKT condition of $\OptSM_{2,1}^{\varepsilon}$~\eqref{eq:OptSTM+L21:eps} is
\begin{subequations}\label{eq:KKT+L21:work-eps}
\begin{equation}\label{eq:KKT+L21:work-eps-1}
\scrH_{\varepsilon_0}(P)
   =P\Lambda
\quad \mbox{with} \quad
\Lambda^{\T}=\Lambda\in\bbR^{k\times k},\quad P\in\bbO^{n\times k},
\end{equation}
where
\begin{equation}\label{eq:partD+L21:work-eps}
\scrH_{\varepsilon_0}(P):=\frac{\partial f_{\varepsilon_0}(P)}{\partial P}
   =\sum_{i=1}^N\psi_i(T_0(P))\,\frac{\partial g_i(P)}{\partial P}
     -\alpha\sum_{i=1}^n\frac {\be_i\be_i^{\T}P}{\sqrt{\tr(P^{\T}\be_i\be_i^{\T}P)+\varepsilon_0^2}}.
\end{equation}
\end{subequations}

\Cref{thm:main-NEPv-cvx+L21:eps} below forms the foundation of the eventual SCF iteration in
\Cref{alg:NEPvSCF4+L21} to solve NEPv~\eqref{eq:OptSTM+L21:NEPv-eps} and in return the optimization problem~\eqref{eq:OptSTM+L21:eps}.
%Again we find that for $P\in\bbO^{n\times k}$
%\begin{equation}\label{eq:H(p)-scrH(P)-eps}
%H_{\varepsilon_0}(P)P-\scrH_{\varepsilon_0}(P)=P\big(2D^{\T}P\big).
%\end{equation}
%We will also have the corresponding versions of \Cref{thm:H(P)-eligibility,thm:main-NEPv-cvx}, without the need to
%assume that $P$ has no zero rows. They are stated as follows.

\begin{algorithm}[t]
\caption{The NEPv approach for solving $\OptSM_{2,1}^{\varepsilon}$~\eqref{eq:OptSTM+L21:eps}} \label{alg:NEPvSCF4+L21}
\begin{algorithmic}[1]
\REQUIRE $g(P)$ as in \eqref{eq:g-cvx-comp} satisfying \eqref{eq:cond4AF-NEPv:cvx} with $\munderbar\alpha=2$,
         regularization parameter $\alpha>0$,
         $\varepsilon_0>0$,
         and initial approximation $P^{(0)}\in\bbP$;
\ENSURE  an approximate maximizer of $\OptSM_{2,1}^{\varepsilon}$~\eqref{eq:OptSTM+L21:eps}.
\FOR{$j=0,1,\ldots$ until convergence}
    \STATE compute $H^{(j)}=H_{\varepsilon_0}(P^{(j)})\in\bbR^{n\times n}$ where $H_{\varepsilon_0}(P)$
           is as in \eqref{eq:OptSTM+L21:H(P)-eps};
    \STATE solve symmetric eigenvalue problem (SEP)
           $H^{(j)}\what P^{(j)}=\what P^{(j)}\Omega_j$ for $\what P^{(j)}\in\bbO^{n\times k}$,
           an orthonormal basis matrix of the eigenspace of $H^{(j)}$ associated with its first $k$ largest eigenvalues;
    \STATE calculate $Q_j\in\bbO^{k\times k}$,
           according to and as required by \eqref{eq:cond4AF-NEPv:cvx}, and let $P^{(j+1)}=\what P^{(j)}Q_j\in\bbP$;
\ENDFOR
\RETURN the last $P^{(j)}$.
\end{algorithmic}
\end{algorithm}

%\begin{theorem}[{\cite[Theorem 6.1]{li:2024}}]\label{thm:H(P)-eligibility-eps}
%$P\in\bbO^{n\times k}$ is a solution to the KKT condition \eqref{eq:OptSTM+L21:KKT'-eps} if and only if
%it is a solution to NEPv \eqref{eq:OptSTM+L21:NEPv'-eps}
%and $D^{\T}P$ is  symmetric.
%\end{theorem}

\begin{theorem}%[{\cite[Theorem 8.1]{li:2024}}]
\label{thm:main-NEPv-cvx+L21:eps}
Consider $\OptSM_{2,1}^{\varepsilon}$~\eqref{eq:OptSTM+L21:eps}. Let $H_{\varepsilon_0}(P)$ be given by \eqref{eq:OptSTM+L21:NEPv'-eps} and
$\bbP$ be the one inherited from \eqref{eq:cond4AF-NEPv:cvx} for $g_i(P)$.
Suppose that $\munderbar\alpha=2$ in \eqref{eq:cond4AF-NEPv:cvx} and that $\psi_i(\bx_{(1:N)})\ge 0$ for those $i$ for which \eqref{eq:cond4AF-NEPv:cvx-b} does not become an equality.
%$$
%\bbP:=\{P\in\bbO^{n\times k}\,:\,P^{\T}D\succeq 0\}\subseteq\bbO^{n\times k}.
%$$
\begin{enumerate}[{\rm (a)}]
  \item {\rm \cite[Theorem 8.1]{li:2024}} For $\what P\in\bbO^{n\times k}$ and $P\in\bbP$,  if
        \begin{equation}\label{eq:NEPv-assume-eps}
        \tr(\what P^{\T}H_{\varepsilon_0}(P)\what P)\ge\tr(P^{\T}H_{\varepsilon_0}(P)P)+\eta
        \quad\mbox{for some $\eta\in\bbR$},
        \end{equation}
        then
        there exists $Q\in\bbO^{k\times k}$ such that $\wtd P=\what PQ\in\bbP$ and
        $f_{\varepsilon_0}(\wtd P)\ge f_{\varepsilon_0}(P)+\eta/2$.
  \item {\rm \cite[Theorem 6.3]{li:2024}}
        Let $P_*\in\bbP$ be an maximizer of $\OptSM_{2,1}^{\varepsilon}$~\eqref{eq:OptSTM+L21:eps}. Then $P_*$ satisfies
        NEPv \eqref{eq:OptSTM+L21:NEPv-eps}, and the eigenvalues of $\Omega_*:=P_*^{\T}H_{\varepsilon_0}(P_*)P_*$ are
        the $k$ largest eigenvalues of $H_{\varepsilon_0}(P_*)$.
\end{enumerate}
\end{theorem}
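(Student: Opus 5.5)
The plan is to show that $f_{\varepsilon_0}=\phi_{\varepsilon_0}\circ T$ is itself a convex composition of atomic functions for NEPv in the sense of \eqref{eq:g-cvx-comp}--\eqref{eq:cond4AF-NEPv:cvx}, with $N+n$ atomic components. Both parts then follow directly from \Cref{thm:main-NEPv-cvx}(a) and~(b), applied with $\psi$ replaced by $\phi_{\varepsilon_0}$, $T_0$ by $T$, and $\munderbar\alpha=2$.

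First I verify the composition structure. The outer function $\phi_{\varepsilon_0}(\bx)=\psi(\bx_{(1:N)})-\alpha\sum_{i=1}^n\sqrt{x_{N+i}+\varepsilon_0^2}$ is convex on $\mathfrak{D}\times\bbR_+^n$, since $-\alpha\sqrt{x+\varepsilon_0^2}$ has second derivative $\frac14\alpha(x+\varepsilon_0^2)^{-3/2}>0$. It is also differentiable everywhere on $\mathfrak{D}\times\bbR_+^n$, including at $x_{N+i}=0$; this is the whole reason for introducing $\varepsilon_0>0$. Its partial derivatives are $\psi_i(\bx_{(1:N)})$ for $i\le N$ and $-\alpha/(2\sqrt{x_{N+i}+\varepsilon_0^2})$ for the last $n$ components.

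Next I check the atomic-function requirements with a common $\bbP$, a common $Q$ and $\munderbar\alpha=2$. The first $N$ components $g_i$ satisfy \eqref{eq:cond4AF-NEPv:cvx} by hypothesis. For $g_{N+i}(P)=\tr(P^{\T}\be_i\be_i^{\T}P)$ I take $H_{N+i}=2\be_i\be_i^{\T}$, following the $m=1$ row of \Cref{tbl:AF-NEPv}. Then \eqref{eq:cond4AF-NEPv:cvx-a} holds with $\munderbar\gamma=2$. Moreover \eqref{eq:cond4AF-NEPv:cvx-b} is an equality with $\munderbar\beta_{N+i}=0$ for every $Q\in\bbO^{k\times k}$, because $\tr(\what P^{\T}\be_i\be_i^{\T}\what P)=\tr((\what PQ)^{\T}\be_i\be_i^{\T}\what PQ)$. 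Hence these components accept whatever $Q$ the $g_i$'s require, and they impose no restriction on $\bbP$. Assembling \eqref{eq:H(P)-comp-form} gives
\[
\sum_{i=1}^N\psi_i(T_0(P))H_i(P)+\sum_{i=1}^n\frac{-\alpha}{2\sqrt{\tr(P^{\T}\be_i\be_i^{\T}P)+\varepsilon_0^2}}\,2\be_i\be_i^{\T},
\]
which is exactly $H_{\varepsilon_0}(P)$ in \eqref{eq:OptSTM+L21:H(P)-eps}.

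Then I check the sign hypothesis of \Cref{thm:main-NEPv-cvx}(a). The new partial derivatives are negative, but their inequalities \eqref{eq:cond4AF-NEPv:cvx-b} are equalities, so no sign condition is needed for them. For the original indices, nonnegativity of $\psi_i$ where the inequality is strict is assumed in the statement. Part (a) then gives the Ansatz with $\omega=1/\munderbar\alpha=1/2$, which is precisely $f_{\varepsilon_0}(\wtd P)\ge f_{\varepsilon_0}(P)+\eta/2$.

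For part (b) I invoke \Cref{thm:main-NEPv-cvx}(b) with the same data. That result is stated for $g$ given by a convex composition and depends only on this structure, so it applies verbatim to $f_{\varepsilon_0}$.

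The main obstacle I anticipate is checking that the cited results remain valid when some partial derivatives of the outer function are negative. In the proof of \cite[Theorem 8.1]{li:2024}, convexity gives $\phi(T(\wtd P))\ge\phi(T(P))+\sum_i\phi_i(T(P))\,[g_i(\wtd P)-g_i(P)]$. One then needs $\sum_i\phi_i\,[\munderbar\alpha g_i(\wtd P)+\munderbar\beta_i g_i(P)]\ge\tr(\what P^{\T}H\what P)$, and the multiplication by $\phi_i$ preserves this inequality only when $\phi_i\ge0$ or the relation is an equality. This is exactly why the equality property of the $\tr(P^{\T}\be_i\be_i^{\T}P)$ terms, and the smoothing by $\varepsilon_0$ that makes $\phi_{\varepsilon_0}$ differentiable at zero rows, are the points I will verify with care.
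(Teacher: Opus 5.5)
Your proposal is correct and follows the paper's own route. The paper likewise writes $f_{\varepsilon_0}=\phi_{\varepsilon_0}\circ T$ as a convex composition in which each $\tr(P^{\T}\be_i\be_i^{\T}P)$ is an atomic function with $H_{N+i}=2\be_i\be_i^{\T}$, $\munderbar\alpha=2$ and equality in \eqref{eq:cond4AF-NEPv:cvx-b}, so the negative partial derivatives are harmless, and then invokes \cite[Theorems 8.1 and 6.3]{li:2024}, i.e., \Cref{thm:main-NEPv-cvx}; your explicit checks that the assembled $H$ reproduces $H_{\varepsilon_0}(P)$ and that $\varepsilon_0>0$ makes $\phi_{\varepsilon_0}$ differentiable supply details the paper leaves implicit.
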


A self-consistent-field (SCF) iteration
to solve \eqref{eq:OptSTM+L21:NEPv-eps} is outlined in \Cref{alg:NEPvSCF4+L21}, based on
\Cref{thm:main-NEPv-cvx+L21:eps}(a).
A few comments regarding its implementation are in order.
\begin{enumerate}[(1)]
  \item A reasonable stopping criterion at Line 1 is
         \begin{equation}\label{eq:stop-1}
         \varepsilon_{\KKT}:=\frac {\big\|\scrH_{\varepsilon_0}(P)-P\Lambda_{\varepsilon_0}(P)\big\|_{\F}}
                                   {\xi}
               \le\epsilon,
%         \varepsilon_{\KKT}:=\frac {\big\|H_{\varepsilon_0}(P)-P\big[P^{\T}H_{\varepsilon_0}(P)P\big]\big\|_{\F}}
%                                   {\xi}
%               \le\epsilon,
         \end{equation}
         where $\Lambda_{\varepsilon_0}(P):=\big(P^{\T}\scrH_{\varepsilon_0}(P)+[\scrH_{\varepsilon_0}(P)]^{\T}P\big)/2$,
         $\epsilon$ is a preselected tolerance, and $\xi$ is an appropriate normalization factor, dependent on
         the expression for $\scrH_{\varepsilon_0}(P)$. In general,
         $$
         \xi=\sum_{i=1}^N|\psi_i(T_0(P))|\,\left\|\frac{\partial g_i(P)}{\partial P}\right\|_{\F}+n\alpha
         $$
         may work well. Later for our concrete examples in \cref{sec:CEgs}, we will use something that is specifically related to $\scrH_{\varepsilon_0}(P)$ in question.
  \item  According to \Cref{thm:main-NEPv-cvx+L21:eps}(a), there is no need to compute the partial eigendecomposition at Line~3
         accurately up to the working precision but rather it suffices to make \eqref{eq:NEPv-assume-eps} satisfied with a relatively large $\eta>0$.
         This is helpful for overall computational efficiency  when $n$ is large and the partial eigendecomposition is computed iteratively \cite{demm:1997,knya:2001,li:2015,parl:1998,saad:1992}.
  \item  At Line 4 it refers to \eqref{eq:cond4AF-NEPv:cvx} for the calculation of $Q_j$.
         Exactly how it is computed depends on the structure of $g$ in \eqref{eq:g-cvx-comp}.
         For example, when $\tr((P^{\T}D)^m)$ is involved, often $Q_j$ is an orthogonal polar factor of
         $\big[\what P^{(j)}\big]^{\T}D$. We will revisit this issue in \cref{sec:CEgs} for each
         concrete $g(P)$.
\end{enumerate}
Finally with \Cref{thm:main-NEPv-cvx+L21:eps}(a), the general convergence theorems, \cite[Theorems~6.3~and~6.4]{li:2024},
apply.
To save space, we omit  stating them here, except mentioning that
$\{f_{\varepsilon_0}(P^{(j)})\}_{j=0}^{\infty}$
is monotonically increasing, and any accumulation point of $\{P^{(j)}\}_{j=0}^{\infty}$ is
a solution to NEPv \eqref{eq:OptSTM+L21:NEPv-eps}.

\section{Acceleration via LOCG}\label{sec:LOCG}
Algorithm~\ref{alg:NEPvSCF4+L21} involves solving a large eigenvalue problem at its line 3 for large $n$.
One option is to employ an iterative eigen-solver. Another option
is to borrow the idea of the locally optimal conjugate gradient technique (LOCG),
which draws inspiration from optimization \cite{poly:1987,taka:1965} and has been
increasingly used in numerical linear algebra for linear systems and eigenvalue problems \cite{beli:2022,imlz:2016,knya:2001,li:2015,yali:2021}
and more recently in \cite{wazl:2022a} for maximizing the sum of coupled traces
and in \cite{li:2024} for general optimization on the Stiefel manifold.

Without loss of generality, let $P^{(-1)}\in\bbO^{n\times k}$ be the approximate maximizer of $\OptSM_{2,1}^{\varepsilon}$~\eqref{eq:OptSTM+L21:eps}
from the very previous iterative step, and $P\in\bbO^{n\times k}$ the current approximate maximizer.
We are now looking for the next approximate maximizer
$P^{(1)}$, along the line of LOCG, according to
\begin{equation}\label{eq:LOCG}
P^{(1)}=\arg\max_{Y\in\bbO^{n\times k}}f_{\varepsilon_0}(Y),\,\,\mbox{s.t.}\,\, \cR(Y)\subseteq\cR([P,\scrR(P),P^{(-1)}]),
\end{equation}
where $\scrR(P)$ is the gradient of $f_{\varepsilon_0}(\cdot)$ at $P$ with respect to the Stiefel manifold:
\begin{equation}\label{eq:R(P)}
\scrR(P)%:=\grad f_{|{{\mathbb O}^{n\times k}}}(P)=
=\scrH_{\varepsilon_0}(P)-P\cdot\frac 12\Big[P^{\T}\scrH_{\varepsilon_0}(P)+\scrH_{\varepsilon_0}(P)^{\T}P\Big],
\end{equation}
with $\scrH_{\varepsilon_0}(P)$ given as in \eqref{eq:partD+L21:work-eps}.
%\begin{equation}\label{eq:opt-OptSTM+L21:scrH-eps}
%\scrH_{\varepsilon_0}(P):=\frac {\partial f_{\varepsilon_0}(P)}{\partial P}
%    =\sum_{i=1}^N\psi_i(T_0(P))\,\frac {\partial g_i(P)}{\partial P}-\alpha\sum_{i=1}^n\frac {\be_i\be_i^{\T}P}{\sqrt{\tr(P^{\T}\be_i\be_i^{\T}P)+\varepsilon_0^2}},
%\end{equation}
Initially for the first iteration, we don't have $P^{(-1)}$ and
it is understood that $P^{(-1)}$ is absent from \eqref{eq:LOCG}, i.e.,
simply $\cR(Y)\subseteq\cR([P,\scrR(P)])$.

We still have to numerically solve \eqref{eq:LOCG}. For that purpose, let $W\in\bbO^{n\times m}$ be an orthonormal basis matrix of subspace
$\cR([P,\scrR(P),P^{(-1)}])$. Generically, $m=3k$ but $m<3k$ can happen.
It can be implemented by the Gram-Schmidt orthogonalization process, starting with orthogonalizing the columns of $\scrR(P)$ against $P$ since
$P\in\bbO^{n\times k}$ already. In MATLAB, to fully take advantage of its optimized functions, we simply set
$W=[\scrR(P),P^{(-1)}]$ (or $W=\scrR(P)$ for the first iteration) and then  do
\begin{equation}\label{eq:W-compute}
\framebox{
\begin{minipage}{10cm}
\tt      W=W-P*(P'*W); W=orth(W); W=W-P*(P'*W); W=orth(W);\\
      W=[P,W];
\end{minipage}
}
\end{equation}
where the first line  performs the classical Gram-Schmidt orthogonalization twice to almost ensure that
the resulting  columns of $W$ are fully orthogonal to the columns of $P$ at the end of the first line,
and {\tt orth} is a MATLAB function for orthogonalization, which uses the thin SVD. Another alternative is the thin QR:
{\tt [W,$\sim$]=qr(W,0)}, which is cheaper.
It is important to note that the first $k$ columns of
the final $W$ are the same as those of $P$.

Now it follows from $\cR(Y)\subseteq\cR([P,\scrR(P),P^{(-1)}])=\cR(W)$ that in \eqref{eq:LOCG}
\begin{subequations}\label{eq:LOCGsub}
\begin{equation}\label{eq:LOCGsub:Y}
Y=WZ\quad\mbox{for $Z\in\bbO^{m\times k}$}.
\end{equation}
Problem \eqref{eq:LOCG} becomes
\begin{equation}\label{eq:LOCGsub-1}
Z_{\opt}=\arg\max_{Z\in\bbO^{m\times k}} \wtd f_{\varepsilon_0}(Z),
\end{equation}
where, upon setting $\bw_i^{\T}=\be_i^{\T}W$,
\begin{equation}\label{eq:LOCGsub-2}
\wtd f_{\varepsilon_0}(Z):=f_{\varepsilon_0}(WZ)
   =g(WZ)-\alpha\sum_{i=1}^n\sqrt{\tr(Z^{\T}\bw_i\bw_i^{\T}Z)+\varepsilon_0^2}\,.
\end{equation}
\end{subequations}
Finally $P^{(1)}=WZ_{\opt}$ for \eqref{eq:LOCG}.
According to \cite[subsection~6.3]{li:2024},  $\wtd f_{\varepsilon_0}(Z)$ satisfies
{\bf the NEPv Ansatz}, with the symmetric matrix-valued function
\begin{align}
\wtd H_{\varepsilon_0}(Z)
    &=W^{\T}H_{\varepsilon_0}(WZ)W \nonumber\\
    &=\sum_{i=1}^N\psi_i(T_0(WZ))\,W^{\T}H_i(WZ)W
    -\alpha\sum_{i=1}^n\frac {\bw_i\bw_i^{\T}}{\sqrt{\tr(Z^{\T}\bw_i\bw_i^{\T}Z)+\varepsilon_0^2}}
   \in\bbR^{m\times m}, \label{eq:NEPv-LOCG-2}
\end{align}
a much smaller matrix in size than $H_{\varepsilon_0}(P)$.
The associated NEPv is
\begin{equation}\label{eq:NEPv-LOCG}
\wtd H_{\varepsilon_0}(Z)\,Z=Z\wtd\Omega, \quad Z\in\bbO^{m\times k},\quad \wtd\Omega=\wtd\Omega^{\T}\in\bbR^{k\times k},
\end{equation}
There are corresponding versions of both \Cref{thm:main-NEPv-cvx+L21:eps} and, in principle,
\Cref{alg:NEPvSCF4+L21} can be used to solve NEPv~\eqref{eq:NEPv-LOCG}.
\Cref{alg:NEPvLOCG} summarizes the LOCG-accelerated NEPv approach.

\begin{algorithm}[t]
\caption{The LOCG-accelerated NEPv approach for solving $\OptSM_{2,1}^{\varepsilon}$~\eqref{eq:OptSTM+L21:eps}}
\label{alg:NEPvLOCG}
\begin{algorithmic}[1]
\REQUIRE $g(P)$ as in \eqref{eq:g-cvx-comp} satisfying \eqref{eq:cond4AF-NEPv:cvx} with $\munderbar\alpha=2$,
         regularization parameter $\alpha>0$,
         $\varepsilon_0>0$,
         and initial approximation $P^{(0)}\in\bbP$;
\ENSURE  an approximate maximizer of $\OptSM_{2,1}^{\varepsilon}$~\eqref{eq:OptSTM+L21:eps}.
\STATE $P^{(-1)}=[\,]$; \% null matrix
\FOR{$j=0,1,\ldots$ until convergence}
    \STATE compute $W\in\bbO^{n\times m}$ such that $\cR(W)=\cR(\big[P^{(j)},\scrR(P^{(j)}),P^{(j-1)}\big])$ as in \eqref{eq:W-compute}, where
           $\scrR(P^{(j)})$ is calculated according to \eqref{eq:R(P)};
    \STATE solve \eqref{eq:LOCGsub-1} for $Z_{\opt}$ by \Cref{alg:NEPvSCF4+L21} with
    $\wtd H_{\varepsilon_0}(\cdot)$ in \eqref{eq:NEPv-LOCG-2}, initially $Z^{(0)}$
           being the first $k$ columns of $I_m$;
    \STATE $P^{(j+1)}=WZ_{\opt}$;
\ENDFOR
\RETURN the last $P^{(j)}$.
\end{algorithmic}
\end{algorithm}

\begin{remark}\label{rk:SCF4npd+LOCG}
There are a few  comments in order, regarding \Cref{alg:NEPvLOCG}.
\begin{enumerate}[(i)]
  %\item The comment we made for \Cref{alg:NEPvSCF4+L21} about supplying a decent initial still applies.
  \item The stopping criterion \eqref{eq:stop-1} can be used at Line 2.
  \item It is important to compute $W$ at Line~3 in such a way, as explained moments ago, that its first $k$
        columns are exactly the same as those of $P^{(j)}$.
        This is because as $P^{(j)}$ converges, $P^{(j+1)}$ changes little from $P^{(j)}$ and hence $Z_{\opt}$
        is increasingly close to the first $k$ columns of $I_m$. This explains the choice of $Z^{(0)}$
        at Line~4.
  \item At Line 4, some saving can be achieved by reusing qualities that are already computed.
        This will be explained in more detail in our concrete examples in \cref{sec:CEgs}.
  \item An area of improvement is to solve \eqref{eq:LOCGsub-1} with an accuracy, fractionally better than the
        current $P^{(j)}$ as an approximate solution of $\OptSM_{2,1}^{\varepsilon}$~\eqref{eq:OptSTM+L21:eps}.
        Specifically, if we use
        \eqref{eq:stop-1} at Line~2 here to stop the for-loop: Lines 2--6, with tolerance $\epsilon$, then instead of using the same
        $\epsilon$ for \Cref{alg:NEPvSCF4+L21} at its Line~1,
        we can use a fraction, say $1/8$,
        of $\varepsilon_{\KKT}$ evaluated at the current approximation $P=P^{(j)}$ as the stopping tolerance within \Cref{alg:NEPvSCF4+L21}.
\end{enumerate}
\end{remark}
%The reader is referred to \cite{li:2024}

%
%It can verified that
%\begin{subequations}\label{eq:KKT-reduced}
%\begin{equation}\label{eq:KKT-reduced-1}
%\wtd\scrH(Z):=\frac {\partial \wtd f(Z)}{\partial Z}=\left.W^{\T}\frac {\partial f(P)}{\partial P}\right|_{P=WZ}=W^{\T}\scrH(WZ),
%\end{equation}
%and the first order optimality condition for \eqref{eq:LOCGsub-1} is
%\begin{equation}\label{eq:KKT-reduced-2}
%\wtd\scrH(Z)=Z\wtd\Lambda
%\quad \mbox{with} \quad
%\wtd\Lambda^{\T}=\wtd\Lambda\in\bbR^{k\times k},\quad Z\in\bbO^{m\times k}.
%\end{equation}
%\end{subequations}

\section{Effect of $(2,1)$-norm regularization}\label{sec:EffL21}
For a properly chosen $\alpha$, it is expected that optimal $P$ of $\OptSM_{2,1}$~\eqref{eq:OptSTM+L21} will have a few (or many) rows with very small
$\|P_{(i,:)}\|_2$, which suggests that the $i$th feature in the data points  is insignificant and thus can be
deselected. But owing to the fact that $P\in\bbO^{n\times k}$, $P$ has at least $k$ rows that have
nontrivial norms, i.e., not small, as guaranteed by \Cref{thm:not-small} below.

\begin{theorem}[{\cite{wazl:2026}}]\label{thm:not-small}
Let $P\in\bbO^{n\times k}$ and rearrange $\{\|P_{(i,:)}\|_2\}_{i=1}^n$ descendingly as
$$
\|P_{(i_1,:)}\|_2\ge\|P_{(i_2,:)}\|_2\ge\cdots\ge\|P_{(i_n,:)}\|_2.
$$
Then
$$
\|P_{(i_j,:)}\|_2\ge\sqrt{\frac {k-j+1}{n-j+1}}\ge\frac 1{\sqrt{n-k+1}}\quad\mbox{for $1\le j\le k$}.
$$
\end{theorem}

%\begin{proof}
%For $1\le j\le k$, noticing $\|P_{(i,:)}\|_2\le 1$ for any $i$, we have
%$$
%k=\sum_{j=1}^n\|P_{(i_j,:)}\|_2^2
% \le (j-1)+(n-j+1)\|P_{(i_j,:)}\|_2^2,
%$$
%implying
%$$
%\|P_{(i_j,:)}\|_2^2\ge\frac {k-j+1}{n-j+1}\ge\frac 1{n-k+1},
%$$
%as was to be shown.
%\end{proof}

As a consequence of this theorem, $P\in\bbO^{n\times k}$ has at most $n-k$ rows potentially zeros. In fact, such $P$ exists,
for example, any $P$ taking the form
\begin{equation}\label{eq:extreme-STM-pt}
\begin{bmatrix}
                          \what P \\
                          0_{(n-k)\times k}
                        \end{bmatrix}
\quad\mbox{with $\what P\in\bbO^{k\times k}$},
\end{equation}
and, any row-permuted one of it,
have exactly $n-k$ zero rows.
Consider again $\OptSM_{2,1}$~\eqref{eq:OptSTM+L21}. By making $\alpha$ larger and larger, increasingly it becomes minimizing
$\|P\|_{2,1}$ over $P\in\bbO^{n\times k}$. Naturally, we are wondering
\begin{equation}\label{eq:minL21}
\mbox{what is}\quad \arg\min_{P\in\bbO^{n\times k}}\|P\|_{2,1}?
\end{equation}
In what follows, $\|\bx\|_p$ (for $1\le p\le\infty$) stands for the $\ell_p$-norm of a column vector $\bx$.

\begin{lemma}\label{lm:basic-1}
Given $\bz\equiv [z_i]\in\bbR^n$ satisfying
\begin{equation}\label{eq:min||z||1a}
0\le z_i\le 1\,\,\mbox{for $1\le i\le n$, and}\,\,
\|\bz\|_2^2\equiv\sum_{i=1}^nz_i^2=k
\end{equation}
where integer $1\le k\le n$, we have
\begin{equation}\label{eq:min||z||1b}
\sqrt{kn}\ge\|\bz\|_1\equiv\sum_{i=1}^nz_i\ge k
\end{equation}
with the upper bound $\sqrt{kn}$ achieved by and only by $z_i=\sqrt{k/n}$ for $1\le i\le n$ and
the lower bound $k$ achieved by and only by those $\bz\equiv [z_i]$:
\begin{equation}\label{eq:optimal-z}
\mbox{$(z_1,\ldots,z_n)$ is a permutation of $(\,\underbrace{1,\ldots,1}_{k},\underbrace{0,\ldots,0}_{n-k}\,)$}.
\end{equation}
% such that $k$ of the $z_i$'s are $1$ and the rest $n-k$ of
%the $z_i$'s are $0$, i.e., $[\bone_k^{\T},0,\ldots,0]^{\T}$ and all of its permutations.
\end{lemma}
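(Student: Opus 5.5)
The two bounds in \eqref{eq:min||z||1b} are handled separately, and each comes with its own equality analysis.

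\emph{Upper bound.} Apply the Cauchy--Schwarz inequality to $\bz$ and the all-ones vector $\bone\in\bbR^n$:
$$
\|\bz\|_1=\sum_{i=1}^n z_i\cdot 1\le \|\bz\|_2\,\|\bone\|_2=\sqrt{k}\,\sqrt{n}=\sqrt{kn}.
$$
Here $\sum_i z_i=\|\bz\|_1$ because every $z_i\ge 0$. Equality in Cauchy--Schwarz holds if and only if $\bz$ is a scalar multiple of $\bone$, so $z_i=c$ for all $i$. The constraint $\|\bz\|_2^2=k$ then forces $nc^2=k$, and $c\ge 0$ gives $c=\sqrt{k/n}$. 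This value lies in $[0,1]$ because $k\le n$, so it is feasible. Hence the upper bound is attained exactly at the stated vector.

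\emph{Lower bound.} Use the box constraint $0\le z_i\le 1$, which gives $z_i^2\le z_i$ for every $i$. Summing over $i$,
$$
k=\sum_{i=1}^n z_i^2\le \sum_{i=1}^n z_i=\|\bz\|_1 .
$$
Equality holds if and only if $z_i^2=z_i$ for every $i$, that is, $z_i\in\{0,1\}$. The constraint $\sum_i z_i^2=k$ then says exactly $k$ entries equal $1$ and the remaining $n-k$ entries are $0$. This is precisely \eqref{eq:optimal-z}. Conversely, any such permutation satisfies \eqref{eq:min||z||1a} and has $\|\bz\|_1=k$, so the lower bound is achieved by these vectors and only by them.

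No step is a real obstacle. The one point to watch is the "only by" claims: equality must be characterized termwise, and one must confirm that the extremal vectors actually satisfy the constraints \eqref{eq:min||z||1a}, namely $\sqrt{k/n}\le 1$ and integrality of $k$.
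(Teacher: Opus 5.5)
Your proof is correct. The upper bound and the inequality $\|\bz\|_1\ge k$ are argued as in the paper (Cauchy--Bunyakovsky--Schwarz, and $z_i^2\le z_i$). For the ``only by'' part of the lower bound, however, you take a genuinely different and much shorter route.

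The paper characterizes the minimizers by a calculus argument. It enumerates critical points of $\|\bz\|_1$ on $\{\|\bz\|_2^2=k\}\cap\mathfrak{D}_n$. A Lagrangian shows that the only interior critical point has all $z_i=\sqrt{k/n}$. Each boundary stratum $\mathfrak{B}_{(n_0,n_1)}$ is then treated by reducing it to the same problem in fewer variables.

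You instead observe that $\|\bz\|_1-k=\sum_i z_i(1-z_i)$ is a sum of nonnegative terms, so equality forces every $z_i\in\{0,1\}$. This is a one-line, fully rigorous argument. It avoids the machinery the paper relies on: that a minimizer on this stratified set must be a critical point on its face, and the implicitly inductive handling of the boundary faces.

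The paper's critical-point template is presumably chosen because it is reused almost verbatim for \Cref{lm:basic-2}. Your termwise idea extends there too. Since $t\mapsto\sqrt{t+\epsilon_0^2}$ is strictly concave on $[0,1]$, it lies above its chord:
$\sqrt{z_i^2+\epsilon_0^2}\ge\epsilon_0+z_i^2\big(\sqrt{1+\epsilon_0^2}-\epsilon_0\big)$, with equality if and only if $z_i\in\{0,1\}$. Summing and using $\sum_i z_i^2=k$ gives the lower bound $k\sqrt{1+\epsilon_0^2}+(n-k)\epsilon_0$ together with its equality case. Your inequality $z_i\ge z_i^2$ is exactly the case $\epsilon_0=0$.
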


\begin{proof}
The case $k=n$ is a trivial one because then \eqref{eq:min||z||1a} implies all $z_i=1$. Thus $\|\bz\|_1=n=k$.

Assume in what follows that $1\le k<n$.
By the  Cauchy-Bunyakovsky-Schwarz inequality, we get
$$
\sum_{i=1}^nz_i\le\sqrt n\sqrt{\sum_{i=1}^nz_i^2}=\sqrt{nk}
$$
with equality if and only if $\bz$ parallels to the vector of all ones, yielding the first inequality in \eqref{eq:min||z||1b}
and the unique optimal $\bz$ with all $z_i=\sqrt{k/n}$.
On the other hand,
notice that $0\le z_i\le 1$ implies $0\le z_i^2\le z_i\le 1$ and hence
$$
\sum_{i=1}^nz_i\ge \sum_{i=1}^nz_i^2=k,
$$
yielding the second inequality in \eqref{eq:min||z||1b} immediately.

It remains to justify the characterization \eqref{eq:optimal-z} on $\bz\equiv [z_i]$ that achieves the lower bound $k$ in \eqref{eq:min||z||1b}.
%: $k$ of the $z_i$'s are $1$ and the rest $n-k$ of the $z_i$'s are $0$.
To that end, we consider
\begin{equation}\label{eq:min||z||1b:pf-0'}
\min_{\bz\in\mathfrak{D}_n}\|\bz\|_1
\quad\mbox{subject to}\,\, \|\bz\|_2^2=k\,\,\mbox{and all $0\le z_i\le 1$},
\end{equation}
%minimizing $\|\bz\|_1$ over $\bz\in\mathfrak{D}_n$ subject to
%$\|\bz\|_2^2=k$,
where
$
\mathfrak{D}_n:=\{\bz\equiv [z_i]\in\bbR^n\,:\, 0\le z_i\le 1\,\,\,\mbox{for $1\le i\le n$}\}
$
is the $n$-dimensional unit cube. The minimum of $\|\bz\|_1$ in $\mathfrak{D}_n$ subject to $\|\bz\|_2^2=k$ occurs at
one of the critical points of $\|\bz\|_1$ over $\bz\in\mathfrak{D}_n$, subject to $\|\bz\|_2^2=k$.
We know that the critical points consist of the ones in the interior of $\mathfrak{D}_n$
and on its boundary, subject to $\|\bz\|_2^2=k$.
\begin{enumerate}[(a)]
  \item The critical points in the interior of $\mathfrak{D}_n$ subject to $\|\bz\|_2^2=k$ are determined through
        Lagrangian
        $$
        \scrL(\bz, \lambda)=\sum_{i=1}^nz_i-\frac {\lambda}2\big(\sum_{i=1}^nz_i^2-k\big).
        $$
        Specifically, the interior critical points satisfy
        $$
        \frac {\partial \scrL(\bz, \lambda)}{\partial z_i}=1-\lambda z_i=0
        \quad\Rightarrow\quad
        z_i=1/\lambda\,\,\mbox{for $1\le i\le n$},
        $$
        and thus $z_i=\sqrt{k/n}$ upon using $\|\bz\|_2^2=k$. So there is just one interior critical
        point: $\bz$ with all $z_i=\sqrt{k/n}$, for which $\|\bz\|_1=\sqrt{kn}>k$.
  \item Consider the boundary of $\mathfrak{D}_n$, i.e., some $z_i\in\{0,1\}$. It can be seen that
        the boundary points can be divided into groups
        $$
        \mathfrak{B}_{(n_0,n_1)}
        =\{\mbox{$\bz\in\mathfrak{D}_n$ has $n_0$ entries  $0$ and $n_1$ entries  $1$ and the rest in $(0,1)$}\}.
        $$
        The entries of $\bz\in\mathfrak{B}_{(n_0,n_1)}$, upon a proper permutation, can be listed as
        $$
        \underbrace{z_{i_1},\ldots, z_{i_{\hat n}}}_{\hat n}, \underbrace{0,\ldots,0}_{n_0}, \underbrace{1,\ldots,1}_{n_1},
        $$
        where $\hat n=n-n_0-n_1$ and $1\le i_1<\cdots<i_{\hat n}\le n$. For our purpose, we are interested only in those such that
        \begin{equation}\label{eq:min||z||1b:pf-0}
        \|\bz\|_2^2=n_1+\sum_{j=1}^{\hat n}z_{i_j}^2=k
        \quad\Rightarrow\quad
        \sum_{j=1}^{\hat n}z_{i_j}^2=k-n_1=:\hat k.
        \end{equation}
        In particular, $n> k\ge n_1$. There are two cases: $n_1=k$ or $n_1<k$, to consider.

        If $n_1=k$ then $\hat n=0$ and thus $n_0=n-k$
        for which case $\bz$ meets the characterization \eqref{eq:optimal-z} already and $\|\bz\|_1=k$.

        Next consider $n_1< k$, and then $\hat n>\hat k$ by \eqref{eq:min||z||1b:pf-0} because all $z_{i_j}\in(0,1)$.
        Therefore, minimizing $\|\bz\|_1$ over $\bz\in\mathfrak{B}_{(n_0,n_1)}$ is turned into, for any given indices $1\le i_1<\cdots<i_{\hat n}\le n$,
        \begin{equation}\label{eq:min||z||1b:pf-1}
        \min\sum_{j=1}^{\hat n}z_{i_j}
        \quad\mbox{subject to}\,\, \sum_{j=1}^{\hat n}z_{i_j}^2=\hat k\,\,\mbox{and all $0\le z_{i_j}\le 1$},
        \end{equation}
        where $\hat n>\hat k$. We allow $z_{i_j}$ possibly taking $0$ or $1$ to so that
        \eqref{eq:min||z||1b:pf-1} is essentially the problem of minimizing $\|\bw\|_1$
        over $\bw\in\mathfrak{D}_{\hat n}$ subject to $\|\bw\|_2^2=\hat k$, having
        the same type as \eqref{eq:min||z||1b:pf-0'}.
%        minimizing $\|\bz\|_1$
%        over $\mathfrak{D}_n$ subject to $\|\bz\|_2^2=k$.
        It can be seen that any  boundary case of \eqref{eq:min||z||1b:pf-1}, i.e., some $z_{i_j}\in\{0,1\}$,
        is a boundary case of original \eqref{eq:min||z||1b:pf-0'}, and so
        we only need to worry about interior critical point(s) for \eqref{eq:min||z||1b:pf-1}.
        By what we have done in item~(a),
        problem \eqref{eq:min||z||1b:pf-1} has one interior critical point in $\mathfrak{D}_{\hat n}$ given by all $z_{i_j}=\sqrt{\hat k/\hat n}$, yielding
        $$
        \|\bz\|_1=\hat n\cdot\sqrt{\hat k/\hat n}+n_1=\sqrt{\hat n\hat k}+n_1> \hat k+n_1=k.
        $$
%        All cases for the boundary of $\mathfrak{D}_{\hat n}$ for \eqref{eq:min||z||1b:pf-1} are covered
%        as these cases are part of those for the boundary of $\mathfrak{D}_n$ for minimizing $\|\bz\|_1$ and thus
%        do not need to be investigated separately.

        In summary, we find $\|\bz\|_1>k$ at all critical points $\bz$ on the boundary of $\mathfrak{D}_n$,
        subject to $\|\bz\|_2^2=k$,  except for those that meet the characterization \eqref{eq:optimal-z}, and for the latter, clearly $\|\bz\|_1=k$.
\end{enumerate}
The proof is completed.
\end{proof}

\begin{theorem}\label{thm:extreme-STM-pt}
Let $P\in\bbO^{n\times k}$. Then $\|P\|_{2,1}\ge k$ with equality if and only if
$P$ can be permuted, through permuting its rows, into a matrix in the form of \eqref{eq:extreme-STM-pt}.
% is essential a permutation matrix, except that its $k$ entries of $1$ can be either $1$ or $-1$.
\end{theorem}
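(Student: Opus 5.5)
The plan is to reduce \Cref{thm:extreme-STM-pt} to \Cref{lm:basic-1} by setting $z_i:=\|P_{(i,:)}\|_2$ for $1\le i\le n$. Then $\|P\|_{2,1}=\|\bz\|_1$, so the whole task is to check that $\bz$ meets the hypotheses \eqref{eq:min||z||1a}.

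First, $\sum_i z_i^2=\|P\|_{\F}^2=\tr(P^{\T}P)=\tr(I_k)=k$. Second, each $z_i\le 1$. To see this, note that $PP^{\T}$ is an orthogonal projector, so its diagonal entries satisfy $0\le(PP^{\T})_{ii}=\|P_{(i,:)}\|_2^2\le 1$. Equivalently, complete $P$ to an orthogonal matrix $[P,P_\perp]\in\bbO^{n\times n}$, whose rows have unit norm. Nonnegativity of the $z_i$ is clear. \Cref{lm:basic-1} then gives $\|P\|_{2,1}=\|\bz\|_1\ge k$, with equality if and only if $\bz$ is a permutation of $(1,\ldots,1,0,\ldots,0)$ with exactly $k$ ones.

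For the equality characterization, suppose $\bz$ has exactly $k$ entries equal to $1$ and the rest equal to $0$. Permute the rows of $P$ so that the unit-norm rows come first; this keeps $P$ in $\bbO^{n\times k}$. The last $n-k$ rows are then zero, so $P=\begin{bmatrix}\what P\\ 0\end{bmatrix}$ with $\what P\in\bbR^{k\times k}$. From $P^{\T}P=\what P^{\T}\what P=I_k$ we get $\what P\in\bbO^{k\times k}$, which is the form \eqref{eq:extreme-STM-pt}.

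Conversely, if $P$ is a row permutation of a matrix of the form \eqref{eq:extreme-STM-pt}, its nonzero rows are the rows of the orthogonal matrix $\what P$. These have unit norm, so $\|P\|_{2,1}=k$.

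The only mildly nontrivial step is the bound $z_i\le1$, which is needed to invoke \Cref{lm:basic-1}. The projector or orthogonal-completion argument above handles it directly. The rest is bookkeeping.
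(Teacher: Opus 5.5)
Your proposal is correct and follows essentially the same route as the paper: set $z_i=\|P_{(i,:)}\|_2$, apply \Cref{lm:basic-1}, and in the equality case permute rows so the $k$ unit-norm rows come first. Where you differ, you only add detail the paper leaves implicit, namely the bound $z_i\le 1$ via the orthogonal projector $PP^{\T}$ and the explicit check that the top block $\what P$ lies in $\bbO^{k\times k}$.
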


\begin{proof}
Let $\bz\equiv [z_i]\in\bbR^n$ with $z_i=\|P_{(i,:)}\|_2$. Then $\bz$ satisfies \eqref{eq:min||z||1a}, and thus
$\|P\|_{2,1}=\|\bz\|_1\ge k$ by \Cref{lm:basic-1}.   When $\|\bz\|_1=\|P\|_{2,1}=k$, by \Cref{lm:basic-1},
$\bz$ can be permuted to $[\bone_k^{\T},0,\ldots,0]^{\T}$. The same permutation will permute
$P$, through permuting its rows, to a matrix in the form of \eqref{eq:extreme-STM-pt} which is also in $\bbO^{n\times k}$
because $P$ is. On the other hand, if $P$ can be permuted, through permuting its rows,
to a matrix as in \eqref{eq:extreme-STM-pt},
then  $\|P\|_{2,1}=k$.
\end{proof}

\begin{theorem}\label{thm:extreme-STM-pt:lim}
For any $\alpha>0$, let $P_{\alpha}$ be a maximizer of $\OptSM_{2,1}$~\eqref{eq:OptSTM+L21}. Consider $\{P_{\alpha}\,:\alpha>0\}$ and let $P_*$
be an accumulation point of $\{P_{\alpha}\,:\alpha>0\}$ as $\alpha\to\infty$. Then $P_*\in\bbO^{n\times k}$ and $\|P_*\|_{2,1}=k$, and thus $P_*$ can be permuted, through permuting its rows, to a matrix in the form of \eqref{eq:extreme-STM-pt}.
\end{theorem}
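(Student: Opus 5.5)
The plan is a comparison argument: test each maximizer $P_{\alpha}$ against a competitor with $(2,1)$-norm exactly $k$, deduce that $\|P_{\alpha}\|_{2,1}\to k$, and then pass to the limit using \Cref{thm:extreme-STM-pt}.

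First, since $\bbO^{n\times k}$ is closed and bounded, any accumulation point $P_*$ of $\{P_{\alpha}\}$ as $\alpha\to\infty$ lies in $\bbO^{n\times k}$. Concretely, $P_*=\lim_{j\to\infty}P_{\alpha_j}$ for some sequence $\alpha_j\to\infty$.

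Fix a competitor $P_0$ of the form \eqref{eq:extreme-STM-pt}, for instance the first $k$ columns of $I_n$. By \Cref{thm:extreme-STM-pt}, $\|P_0\|_{2,1}=k$. Optimality of $P_\alpha$ gives
$$
g(P_\alpha)-\alpha\|P_\alpha\|_{2,1}\ \ge\ g(P_0)-\alpha k .
$$
Since $g$ is smooth on a neighborhood of the compact set $\bbO^{n\times k}$, it is continuous there, so $M:=\max_{P\in\bbO^{n\times k}}|g(P)|<\infty$. Rearranging yields
$$
k\ \le\ \|P_\alpha\|_{2,1}\ \le\ k+\frac{g(P_\alpha)-g(P_0)}{\alpha}\ \le\ k+\frac{2M}{\alpha},
$$
where the lower bound comes from \Cref{thm:extreme-STM-pt}.

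Along the sequence $\alpha_j\to\infty$, we therefore get $\|P_{\alpha_j}\|_{2,1}\to k$. The $(2,1)$-norm is continuous, being a norm, so $\|P_*\|_{2,1}=k$. The characterization in \Cref{thm:extreme-STM-pt} then shows that $P_*$ is a row permutation of a matrix of the form \eqref{eq:extreme-STM-pt}.

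The only point needing care is the uniform bound on $g$ over the Stiefel manifold. It follows from continuity and compactness, since $g$ is assumed smooth on $\bbO^{n\times k}_\delta$. Once that bound is in hand, the rest of the argument is routine.
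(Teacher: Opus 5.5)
Your proof is correct and uses the same ingredients as the paper's: comparing $P_\alpha$ against a competitor with $(2,1)$-norm $k$, a uniform bound on $|g|$ over $\bbO^{n\times k}$, and continuity of $\|\cdot\|_{2,1}$. The paper argues by contradiction, assuming $\delta=\|P_*\|_{2,1}-k>0$; your direct version gets the same conclusion and also gives the explicit rate $k\le\|P_\alpha\|_{2,1}\le k+2M/\alpha$.
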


\begin{proof}
Because all $P_{\alpha}\in\bbO^{n\times k}$, we know $P_*\in\bbO^{n\times k}$. We claim that $\|P_*\|_{2,1}=k$; otherwise
$\|P_*\|_{2,1}>k$. Suppose that were the case and let $\delta=\|P_*\|_{2,1}-k>0$.
Since $P_*$ is an accumulation point of $\{P_{\alpha}\,:\alpha>0\}$ as $\alpha\to\infty$, there exists an increasing sequence $\{\alpha_j\}_{j=1}^{\infty}$
such that
\begin{equation}\label{eq:extreme-STM-pt-pf-1}
\lim_{j\to\infty}\alpha_j=\infty, \quad \lim_{j\to\infty} P_{\alpha_j}=P_*.
\end{equation}
By the definition of $P_{\alpha}$, we know
$g(P_{\alpha_j})-\alpha_j\|P_{\alpha_j}\|_{2,1}\ge g(P)-\alpha_j\|P\|_{2,1}$ for any $P\in\bbO^{n\times k}$ and any $j$, or equivalently
\begin{equation}\label{eq:extreme-STM-pt-pf-2}
\frac {g(P_{\alpha_j})}{\alpha_j}-\|P_{\alpha_j}\|_{2,1}\ge\frac {g(P)}{\alpha_j}-\|P\|_{2,1}
\quad\mbox{for any $P\in\bbO^{n\times k}$ and any $j$}.
\end{equation}
Also because of the smoothness assumption on $g(P)$ in $\OptSM_{2,1}$~\eqref{eq:OptSTM+L21}, there exists $\Gamma>0$ such that $|g(P)|\le\Gamma$
for all $P\in\bbO^{n\times k}$. By \eqref{eq:extreme-STM-pt-pf-1} and the continuity of $\|\cdot\|_{2,1}$, there exists integer $j_0$
such that
\begin{equation}\label{eq:extreme-STM-pt-pf-3}
\frac {\Gamma}{\alpha_{j_0}}\le\frac {\delta}4, \quad \|P_{\alpha_{j_0}}\|_{2,1}\ge\|P_*\|_{2,1}-\frac {\delta}4=k+\frac 34\delta.
\end{equation}
Let $\wtd P\in\bbO^{n\times k}$ such that $\|\wtd P\|_{2,1}=k$. Such $\wtd P$ exists, for example the one in
\eqref{eq:extreme-STM-pt}. We have by \eqref{eq:extreme-STM-pt-pf-3}
\begin{equation}\label{eq:extreme-STM-pt-pf-4}
\frac {g(\wtd P)}{\alpha_{j_0}}-\|\wtd P\|_{2,1}
  \ge-\frac {\Gamma}{\alpha_{j_0}}-k\ge -\frac {\delta}4-k.
\end{equation}
On the other hand, we also have, by \eqref{eq:extreme-STM-pt-pf-3},
$$
\frac {g(P_{\alpha_{j_0}})}{\alpha_{j_0}}-\|P_{\alpha_{j_0}}\|_{2,1}
  \le \frac {\Gamma}{\alpha_{j_0}}-k-\frac 34\delta\le -k-\frac {\delta}2,
$$
which, together with \eqref{eq:extreme-STM-pt-pf-4}, lead to
$$
\frac {g(P_{\alpha_{j_0}})}{\alpha_{j_0}}-\|P_{\alpha_{j_0}}\|_{2,1}
   \le -k-\frac {\delta}2<-k-\frac {\delta}4\le\frac {g(\wtd P)}{\alpha_{j_0}}-\|\wtd P\|_{2,1},
$$
contradicting \eqref{eq:extreme-STM-pt-pf-2} with $j=j_0$ and $P=\wtd P$.
Therefore $\|P_*\|_{2,1}=k$, as was to be shown.
\end{proof}

What \Cref{thm:extreme-STM-pt:lim} says,  any maximizer of $\OptSM_{2,1}$~\eqref{eq:OptSTM+L21}
selects exactly $k$ rows (or features in the terminology of machine learning) in the limit as $\alpha\to\infty$.

\begin{remark}\label{rk:extreme-STM-pt}
There are a few comments in order.
\begin{enumerate}[(i)]
  \item Although $\OptSM_{2,1}$~\eqref{eq:OptSTM+L21} may have more than one maximizer for given $\alpha>0$, $P_{\alpha}$ in \Cref{thm:extreme-STM-pt:lim} can be any one of the maximizers.
  \item \Cref{thm:extreme-STM-pt:lim} does not say that $\lim P_{\alpha}$ as $\alpha\to\infty$ exists
        but merely that any accumulation point $P_*$, as $\alpha\to\infty$,  has $\|P_*\|_{2,1}=k$.
  \item In actual computations, we may select an increasing sequence $\{\alpha_j\}$ and compute one maximizer
        $P_{\alpha_j}$ for each $\alpha_j$ up to $\alpha_j$ sufficiently large so that $\|P_{\alpha_j}\|_{2,1}\approx k$. When that is the case, $P_{\alpha_j}$ will have $n-k$ rows with negligible norms while the norms of the other $k$ rows are
        approximately $1$. Conceivably, $P_{\alpha_{j-1}}$ can be used as an initial for computing he next $P_{\alpha_j}$.
  \item \Cref{thm:extreme-STM-pt:lim} does not require $g(P)$ taking the form as in \eqref{eq:g-cvx-comp}
        that our NEPv approach in \cref{sec:NEPv+L21} relies on for convergence guarantee. In fact,
        the proof of the theorem only demands that $g(\cdot)$ is bounded on $\bbO^{n\times k}$.
\end{enumerate}
\end{remark}

\Cref{thm:extreme-STM-pt:lim} is about problem $\OptSM_{2,1}$~\eqref{eq:OptSTM+L21}, but what we are promoting computationally is
the perturbed problem \eqref{eq:OptSTM+L21:eps}. Naturally we may wonder if we could have something
similar for the latter.

\begin{lemma}\label{lm:basic-2}
Given $\epsilon_0\ge 0$ and $\bz=[z_i]\in\bbR^n$ satisfying \eqref{eq:min||z||1a}, we have
\begin{equation}\label{eq:min||z||1b-2}
\sqrt{n(k+n\epsilon_0^2)}\ge h(\bz):=\sum_{i=1}^n\sqrt{z_i^2+\epsilon_0^2}\ge k\sqrt{1+\epsilon_0^2}+(n-k)\epsilon_0,
\end{equation}
where the upper bound $\sqrt{n(k+n\epsilon_0^2)}$ on $h(\bz)$ is achieved by and only by $\bz$ with $z_i=\sqrt{k/n}$
               for $1\le i\le n$, and
the lower bound $k\sqrt{1+\epsilon_0^2}+(n-k)\epsilon_0$ on $h(\bz)$ is achieved by and only by $\bz\equiv [z_i]$
as characterized by \eqref{eq:optimal-z}.
% such that $k$ of the $z_i$'s are $1$ and the rest $n-k$ of
%the $z_i$'s are $0$, i.e., $[\bone_k^{\T},0,\ldots,0]^{\T}$ and all of its permutations.
As a consequence, it is implied that
\begin{equation}\label{eq:min||z||1b-2'}
\sqrt{n(k+n\epsilon_0^2)}> k\sqrt{1+\epsilon_0^2}+(n-k)\epsilon_0
\quad\mbox{if $k<n$}.
\end{equation}
\end{lemma}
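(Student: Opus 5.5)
The natural route is a change of variable that turns \Cref{lm:basic-2} into a concavity statement. Set $t_i=z_i^2\in[0,1]$, so that $\sum_{i=1}^n t_i=k$ and
$$
h(\bz)=\sum_{i=1}^n\varphi(t_i),\qquad \varphi(t):=\sqrt{t+\epsilon_0^2}.
$$
The function $\varphi$ is strictly concave on $[0,1]$, since $\varphi''(t)=-\frac14(t+\epsilon_0^2)^{-3/2}<0$ there. This covers $\epsilon_0=0$ as well, because $\sqrt{t}$ is strictly concave on $[0,\infty)$. Both bounds will therefore be statements about a strictly concave separable function on the polytope
$$
\mathfrak{T}:=\Big\{\bz\in[0,1]^n: \sum_{i=1}^n t_i=k\Big\},
$$
written here in the $t$-coordinates.

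For the upper bound I apply Jensen's inequality: $\frac1n\sum_i\varphi(t_i)\le\varphi\big(\frac1n\sum_i t_i\big)=\varphi(k/n)$. This gives $h(\bz)\le n\sqrt{k/n+\epsilon_0^2}=\sqrt{n(k+n\epsilon_0^2)}$. Strict concavity forces equality exactly when all $t_i=k/n$, that is, when $z_i=\sqrt{k/n}$ for every $i$. An alternative that mirrors \Cref{lm:basic-1} is Cauchy--Schwarz applied to the vector $(\sqrt{z_i^2+\epsilon_0^2})_i$, whose squared $2$-norm is $k+n\epsilon_0^2$.

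For the lower bound I use the chord of $\varphi$ on $[0,1]$. Strict concavity gives
$$
\varphi(t)\ge(1-t)\varphi(0)+t\varphi(1)=(1-t)\epsilon_0+t\sqrt{1+\epsilon_0^2}
$$
for $t\in[0,1]$, with equality if and only if $t\in\{0,1\}$. Summing over $i$ and using $\sum_i t_i=k$ gives $h(\bz)\ge(n-k)\epsilon_0+k\sqrt{1+\epsilon_0^2}$. Equality holds if and only if every $t_i\in\{0,1\}$. Since $\sum_i t_i=k$, this means exactly $k$ of the $z_i$ equal $1$ and the rest equal $0$, which is the characterization \eqref{eq:optimal-z}. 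This avoids the critical-point case analysis used in \Cref{lm:basic-1}.

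The main delicacy is the degenerate case $\epsilon_0=0$. There the chord argument still works because $\sqrt t$ is strictly concave on $[0,1]$, and the result reduces to \Cref{lm:basic-1}. One must also check the equality claims carefully, namely that strictness of the chord inequality holds for $t\in(0,1)$.

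Finally, for \eqref{eq:min||z||1b-2'} with $k<n$: the maximizer $z_i=\sqrt{k/n}\in(0,1)$ does not satisfy \eqref{eq:optimal-z}. Hence its value, which is the upper bound, strictly exceeds the minimum value $k\sqrt{1+\epsilon_0^2}+(n-k)\epsilon_0$, because the lower bound is attained only at points of the form \eqref{eq:optimal-z}. When $k=n$ both sides equal $n\sqrt{1+\epsilon_0^2}$, which is why the strict inequality requires $k<n$.
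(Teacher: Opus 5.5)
Your proof is correct. For the lower bound it takes a genuinely different and much shorter route than the paper.

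For the upper bound, the paper uses exactly the Cauchy--Schwarz argument you list as an alternative, and your Jensen argument is equivalent to it.

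For the lower bound, the paper stays in the variables $z_i$ and does a constrained critical-point analysis of $h$ on the cube $[0,1]^n$ subject to $\|\bz\|_2^2=k$:
\begin{enumerate}[(i)]
  \item A Lagrangian computation shows that the only interior critical point is the uniform vector.
  \item The boundary is split into strata $\mathfrak{B}_{(n_0,n_1)}$, in which some entries are frozen at $0$ or $1$.
  \item Each stratum reduces to a lower-dimensional instance of the same problem, whose interior critical point is again uniform.
  \item That critical value is compared with the target using the strict inequality \eqref{eq:min||z||1b-2'} in dimension $(\hat n,\hat k)$. The paper obtains that inequality beforehand from the uniqueness of the maximizer.
\end{enumerate}
This route needs existence of a minimizer, a constraint qualification, and the stratum-by-stratum case analysis.

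Your substitution $t_i=z_i^2$ makes the constraint linear. The secant-line inequality for the strictly concave $\varphi(t)=\sqrt{t+\epsilon_0^2}$ on $[0,1]$ then gives both the bound and its equality cases in one line. None of the machinery above is needed.

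For $\epsilon_0=0$ your secant inequality is exactly the inequality $z_i\ge z_i^2$ used in \Cref{lm:basic-1}. Strictness on $(0,1)$ now yields the characterization \eqref{eq:optimal-z} directly, so your argument also removes the critical-point analysis from \Cref{lm:basic-1}.

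You derive \eqref{eq:min||z||1b-2'} from uniqueness of the minimizer, whereas the paper uses uniqueness of the maximizer. Either works.

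The paper's approach is a general template that enumerates candidate extrema face by face. Yours exploits the structural fact that $h$ is a separable concave function of the $z_i^2$, and that is what makes the problem easy; here your version is preferable.

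One small slip: in defining $\mathfrak{T}$ you wrote $\bz\in[0,1]^n$ where you meant the vector $(t_1,\dots,t_n)$.
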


\begin{proof}
Inequality \eqref{eq:min||z||1b-2'} is a corollary of \eqref{eq:min||z||1b-2} because, as we will show in a minute,
$\sqrt{n(k+n\epsilon_0^2)}$ is the maximum of $h(\cdot)$ achieved by and only by $\bz$ with all $z_i=\sqrt{k/n}$
whereas the right-side of \eqref{eq:min||z||1b-2'} is $h(\cdot)$ evaluated at $\bz=[\bone_k^{\T},0,\ldots,0]^{\T}$,
a point that is different from the maximizer as described moments ago if $k<n$.

The case $k=n$ is a trivial one because then \eqref{eq:min||z||1a} implies all $z_i=1$.
Thus $h(\bz)=n\sqrt{1+\epsilon_0^2}$.
Assume in what follows that $1\le k<n$.

By the  Cauchy-Bunyakovsky-Schwarz
inequality, we get
$$
\sum_{i=1}^n\sqrt{z_i^2+\epsilon_0^2}\le\sqrt n\cdot\sqrt{\sum_{i=1}^n(z_i^2+\epsilon_0^2)}=\sqrt{n(k+n\epsilon_0^2)}
$$
with equality if and only if all $\sqrt{z_i^2+\epsilon_0^2}$ are the same,
yielding all $z_i=\sqrt{k/n}$. This proves the first inequality in \eqref{eq:min||z||1b-2} and when and only when it becomes an equality.

Next, we prove the second inequality in \eqref{eq:min||z||1b-2}.
To that end, we consider
\begin{equation}\label{eq:minh(z)1b:pf-0'}
\min_{\bz\in\mathfrak{D}_n} h(\bz)
\quad\mbox{subject to}\,\, \|\bz\|_2^2=k\,\,\mbox{and all $0\le z_i\le 1$},
\end{equation}
%minimizing
%$h(\bz)$
%over $\bz\in\mathfrak{D}_n$, subject to $\|\bz\|_2^2=k$,
where
$\mathfrak{D}_n$ (and $\mathfrak{B}_{(n_0,n_1)}$ below too)
are the same ones as in the proof of \Cref{lm:basic-1}.
By calculus, $h(\bz)$ achieves its minimum at one of its critical points over $\bz\in\mathfrak{D}_n$, subject to $\|\bz\|_2^2=k$.
The critical points consist of the ones in the interior of $\mathfrak{D}_n$
and on its boundary, subject to $\|\bz\|_2^2=k$.
\begin{enumerate}[(i)]
  \item The critical points in the interior of $\mathfrak{D}_n$ subject to $\|\bz\|_2^2=k$ are determined through
        Lagrangian
        $$
        \scrL(\bz, \lambda)=h(\bz)-\frac {\lambda}2\big(\sum_{i=1}^nz_i^2-k\big).
        $$
        Specifically, the interior critical points satisfy
        \begin{equation}\label{eq:min||z||1b-2:pf-1}
        \frac {\partial \scrL(\bz, \lambda)}{\partial z_i}
          =\frac {z_i}{\sqrt{z_i^2+\epsilon_0^2}}-\lambda z_i=0
        \quad\Rightarrow\quad
        z_i\left(\frac 1{\sqrt{z_i^2+\epsilon_0^2}}-\lambda\right)=0\,\,\mbox{for $1\le i\le n$}.
        \end{equation}
        Since we are looking at the interior points of $\mathfrak{D}_n$, i.e., all $0<z_i<1$, we get from
        \eqref{eq:min||z||1b-2:pf-1} that $1/\sqrt{z_i^2+\epsilon_0^2}=\lambda$ for all $i$ and thus all $z_i$ are the same,
        yielding all $z_i=\sqrt{k/n}$ by $\|\bz\|_2^2=k$, for which $h(\bz)=\sqrt{n(k+n\epsilon_0^2)}> k\sqrt{1+\epsilon_0^2}+(n-k)\epsilon_0$.
  \item Consider the boundary of $\mathfrak{D}_n$, i.e., some $z_i\in\{0,1\}$. The lines of argument
        in the first paragraph of item (b) in the proof of \Cref{lm:basic-1} remain valid.

        If $n_1=k$ then $\hat n=0$ and thus $n_0=n-k$
        for which case $\bz$ meets the characterization \eqref{eq:optimal-z} already and $h(\bz)=k\sqrt{1+\epsilon_0^2}+(n-k)\epsilon_0$.

        Next consider $n_1< k$, and then $\hat n>\hat k$ by \eqref{eq:min||z||1b:pf-0} because all $z_{i_j}\in(0,1)$.
        Therefore, minimizing $h(\bz)$ over $\bz\in\mathfrak{B}_{(n_0,n_1)}$ is turned into, for any given indices $1\le i_1<\cdots<i_{\hat n}\le n$,
        \begin{equation}\label{eq:min||z||1b-2:pf-2}
        \min\sum_{j=1}^{\hat n}\sqrt{z_{i_j}^2+\epsilon_0^2}
        \quad\mbox{subject to}\,\, \sum_{j=1}^{\hat n}z_{i_j}^2=\hat k\,\,\mbox{and all $0\le z_{i_j}\le 1$},
        \end{equation}
        where $\hat n>\hat k$. By what we have done in item~(i),
        problem \eqref{eq:min||z||1b-2:pf-2} has one interior critical point in $\mathfrak{D}_{\hat n}$ given by all $z_{i_j}=\sqrt{\hat k/\hat n}$, yielding
        \begin{align*}
        h(\bz)&=n_0\epsilon_0+n_1\sqrt{1+\epsilon_0^2}+\sqrt{\hat n(\hat k+\hat n\epsilon_0^2)} \\
          &>n_0\epsilon_0+n_1\sqrt{1+\epsilon_0^2}+\hat k\sqrt{1+\epsilon_0^2}+(\hat n-\hat k)\epsilon_0 \\
          &=k\sqrt{1+\epsilon_0^2}+(n-k)\epsilon_0.
        \end{align*}
        Any boundary case for \eqref{eq:min||z||1b-2:pf-2} is a boundary case for the original \eqref{eq:minh(z)1b:pf-0'}, and thus do not need to be investigated separately.

        In summary, we find $h(\bz)>k\sqrt{1+\epsilon_0^2}+(n-k)\epsilon_0$ at all critical points $\bz$ on the boundary of $\mathfrak{D}_n$,
        subject to $\|\bz\|_2^2=k$,  except for those that meet the characterization \eqref{eq:optimal-z}, and for the latter, clearly
        $h(\bz)=k\sqrt{1+\epsilon_0^2}+(n-k)\epsilon_0$.
\end{enumerate}
The proof is completed.
\end{proof}

Similar to \Cref{thm:extreme-STM-pt:lim}, with the help of \Cref{lm:basic-2}, we arrive at the following theorem.

\begin{theorem}\label{thm:extreme-STM-pt:work-eps}
Let $P\in\bbO^{n\times k}$ and $\epsilon_0\ge 0$. Then
\begin{equation}\label{eq:||z||1:work-eps}
\sum_{i=1}^n\sqrt{\|P_{(i,:)}\|_2^2+\epsilon_0^2}\ge k\sqrt{1+\epsilon_0^2}+(n-k)\epsilon_0
\end{equation}
with equality if and only if
$P$ can be permuted, through permuting its rows, to one in the form of \eqref{eq:extreme-STM-pt}.
\end{theorem}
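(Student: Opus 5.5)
The plan is to mirror the proof of \Cref{thm:extreme-STM-pt}, with \Cref{lm:basic-2} playing the role that \Cref{lm:basic-1} played there. Given $P\in\bbO^{n\times k}$, I set $\bz\equiv[z_i]\in\bbR^n$ with $z_i=\|P_{(i,:)}\|_2$. The left-hand side of \eqref{eq:||z||1:work-eps} is then exactly $h(\bz)$ as defined in \Cref{lm:basic-2}, so the whole task reduces to checking that $\bz$ satisfies the hypotheses \eqref{eq:min||z||1a}.

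First I verify those hypotheses. Nonnegativity $z_i\ge 0$ is automatic. For $z_i\le 1$, I use that $P_{(i,:)}=\be_i^{\T}P$, so $\|P_{(i,:)}\|_2\le\|\be_i\|_2\|P\|_2=1$, since $P$ has orthonormal columns. For the constraint, $\sum_i z_i^2=\|P\|_{\F}^2=\tr(P^{\T}P)=k$. With these in hand, \Cref{lm:basic-2} immediately gives $h(\bz)\ge k\sqrt{1+\epsilon_0^2}+(n-k)\epsilon_0$, which is the inequality.

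Next comes the equality characterization. If equality holds, \Cref{lm:basic-2} says $\bz$ is a permutation of $(1,\ldots,1,0,\ldots,0)$ with $k$ ones. Applying the same permutation to the rows of $P$ moves the $n-k$ zero rows to the bottom. The resulting matrix is still in $\bbO^{n\times k}$, because a row permutation is left multiplication by an orthogonal matrix. Its top $k\times k$ block $\what P$ therefore satisfies $\what P^{\T}\what P=I_k$, so $\what P\in\bbO^{k\times k}$, and the matrix has the form \eqref{eq:extreme-STM-pt}. Conversely, if $P$ is a row permutation of a matrix of the form \eqref{eq:extreme-STM-pt}, its row norms are $k$ ones (the rows of the orthogonal $\what P$) and $n-k$ zeros. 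Direct evaluation then gives $k\sqrt{1+\epsilon_0^2}+(n-k)\epsilon_0$.

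No step is a genuine obstacle, since all the analytic work sits in \Cref{lm:basic-2}. The one point I would state carefully is the case $\epsilon_0=0$, where the statement reduces to \Cref{thm:extreme-STM-pt}; \Cref{lm:basic-2} already allows $\epsilon_0\ge 0$, so no separate treatment is needed.
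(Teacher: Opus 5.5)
Your proposal is correct and follows the paper's intended route. The paper gives no separate proof; it obtains this theorem from \Cref{lm:basic-2} exactly as \Cref{thm:extreme-STM-pt} follows from \Cref{lm:basic-1}, using $z_i=\|P_{(i,:)}\|_2$ and the row-permutation argument. Your explicit checks that $z_i\le 1$, that $\sum_i z_i^2=k$, and that the top $k\times k$ block of the permuted matrix is orthogonal supply details the paper leaves implicit.
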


By \Cref{lm:basic-2}, \eqref{eq:||z||1:work-eps} becoming an equality is equivalent to $\|P\|_{2,1}=k$.
Finally, \Cref{thm:extreme-STM-pt:lim} almost holds verbatim for the perturbed problem \eqref{eq:OptSTM+L21:eps}.

\begin{theorem}\label{thm:extreme-STM-pt:work-eps'}
For any $\alpha>0$, let $P_{\alpha}$ be a maximizer of $\OptSM_{2,1}^{\varepsilon}$~\eqref{eq:OptSTM+L21:eps}. Consider $\{P_{\alpha}\,:\alpha>0\}$ and let $P_*$
be an accumulation point of $\{P_{\alpha}\,:\alpha>0\}$ as $\alpha\to\infty$. Then $P_*\in\bbO^{n\times k}$ and
$$
\sum_{i=1}^n\sqrt{\|P_{*(i,:)}\|_2^2+\epsilon_0^2}= k\sqrt{1+\epsilon_0^2}+(n-k)\epsilon_0,
$$
and thus $P_*$ can be permuted, through permuting its rows, to a matrix in the form of \eqref{eq:extreme-STM-pt}.
\end{theorem}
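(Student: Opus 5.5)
The plan is to mirror the proof of \Cref{thm:extreme-STM-pt:lim}, with the function
$$
h(P):=\sum_{i=1}^n\sqrt{\|P_{(i,:)}\|_2^2+\epsilon_0^2}
$$
playing the role of $\|P\|_{2,1}$. Write $c_*:=k\sqrt{1+\epsilon_0^2}+(n-k)\epsilon_0$ for the minimum value supplied by \Cref{thm:extreme-STM-pt:work-eps}. Since each $P_\alpha\in\bbO^{n\times k}$ and $\bbO^{n\times k}$ is closed, we immediately get $P_*\in\bbO^{n\times k}$. By \Cref{thm:extreme-STM-pt:work-eps}, $h(P)\ge c_*$ on $\bbO^{n\times k}$, so it suffices to rule out $h(P_*)>c_*$.

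Suppose instead that $\delta:=h(P_*)-c_*>0$. Choose $\alpha_j\uparrow\infty$ with $P_{\alpha_j}\to P_*$. Optimality of $P_{\alpha_j}$ for $\OptSM_{2,1}^{\varepsilon}$ gives, after dividing by $\alpha_j$,
$$
\frac{g(P_{\alpha_j})}{\alpha_j}-h(P_{\alpha_j})\ge\frac{g(P)}{\alpha_j}-h(P)
\quad\mbox{for all $P\in\bbO^{n\times k}$}.
$$
Because $g$ is smooth (hence continuous) on a neighborhood of the compact set $\bbO^{n\times k}$, there is $\Gamma$ with $|g|\le\Gamma$ there. Since $h$ is continuous, we can pick $j_0$ so that $\Gamma/\alpha_{j_0}\le\delta/4$ and $h(P_{\alpha_{j_0}})\ge c_*+\tfrac34\delta$.

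Now test the inequality with $\wtd P$ of the form \eqref{eq:extreme-STM-pt}, for which $h(\wtd P)=c_*$. The right-hand side is then at least $-\delta/4-c_*$, while the left-hand side is at most $\delta/4-c_*-\tfrac34\delta=-c_*-\delta/2$. This is a contradiction, so $h(P_*)=c_*$. The equality case of \Cref{thm:extreme-STM-pt:work-eps} (equivalently \Cref{lm:basic-2} applied to $z_i=\|P_{*(i,:)}\|_2$) then shows that $P_*$ is a row permutation of a matrix of the form \eqref{eq:extreme-STM-pt}.

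No step here is a real obstacle. The only points needing care are the boundedness of $g$ on $\bbO^{n\times k}$, which follows from continuity on a compact set, and the fact that the lower bound $c_*$ is actually attained. Both are already available from the earlier results, so the argument is essentially a transcription of the proof of \Cref{thm:extreme-STM-pt:lim}.
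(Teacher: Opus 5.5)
Your proposal is correct and follows the paper's route. The paper gives no separate proof; it only says that the proof of \Cref{thm:extreme-STM-pt:lim} carries over almost verbatim, and your argument is that transcription, with $\|\cdot\|_{2,1}$ replaced by $h$ and the bound $k$ replaced by $k\sqrt{1+\epsilon_0^2}+(n-k)\epsilon_0$ from \Cref{thm:extreme-STM-pt:work-eps}.
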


% Although, in stating \Cref{alg:NEPvSCF4+L21}, we request an to start the SCF iteration, we have to be very careful about what such as initial is to supply. Later in \Cref{sec:EffL21} we argue that

%With random initial $P^{(0)}\in\bbP$,
%        chances are great for the eventually computed $P^{(j)}$ to fall into one of the equivalent classes of KKT points,
%        which does not materially maximize $g(P)$ at all, defeating the purpose of finding a row sparse optimizer to maximize $g(P)$.
%        Numerically, we do witness such a phenomenon. In light of this, we suggest to supply an initial $P^{(0)}\in\bbP$  that
%        approximately maximizes $g(P)$ somewhat. One way to do so is to run \Cref{alg:NEPvSCF4+L21} with $\alpha=0$ first, i.e.,
%        solve $\OptSM$~\eqref{eq:OptSTM} roughly and supply the computed solution as an initial.

\section{Applications to common learning objectives}\label{sec:CEgs}
In this section, we will substantiate $\OptSM_{2,1}$~\eqref{eq:OptSTM+L21} with a few concrete objectives $g(P)$, in the form
\eqref{eq:g-cvx-comp}, that have been commonly used in machine learning. In cases where matrix trace function $\tr(P^{\T}D)$
is involved, $D$ plays a role of selector \cite{luli:2024,teli:2025} and we need
$$
\bbP\in\bbO_{D+}^{n\times k}:=\{P\in\bbO^{n\times k}\,:\,P^{\T}D\succeq 0\}\subset \bbO^{n\times k}
$$
as in \Cref{tbl:AF-NEPv},
and accordingly we take $Q_j$ at Line 4 of \Cref{alg:NEPvSCF4+L21} to be an orthogonal polar factor
of $\big[\what P^{(j)}\big]^{\T}D$ \cite{high:1986,high:2008};
otherwise $\bbP=\bbO^{n\times k}$  and $Q_j=I_k$.

For all concrete objectives in what follows, \Cref{thm:main-NEPv-cvx+L21:eps} and \Cref{alg:NEPvSCF4+L21,alg:NEPvLOCG}
are  readily applicable.
In fact, \Cref{thm:main-NEPv-cvx+L21:eps}(b) can be slightly strengthened. As is, $P_*\in\bbP$ is a condition to \Cref{thm:main-NEPv-cvx+L21:eps}(b). For all concrete objectives in this section, this condition can be removed because always $P_*\in\bbP$
is guaranteed for any maximizer. Also those concrete objectives fall into the kinds investigated
in \cite{ball:2022,luli:2024} on linear convergence analysis of SCF.

\subsection{MAXBET type with the $(2,1)$-norm regularization}\label{ssec:MAXBET+L21}
In statistical learning, sometimes, the following optimization problem
\begin{equation}\label{eq:OptSTM-MAXBET}
\max_{P\in\bbO^{n\times k}}\Big\{g(P):=\tr(P^{\T}AP)+2\tr(P^{\T}D)\Big\}
\end{equation}
 has to be solved,
where symmetric $A\in\bbR^{n\times n}$ and $D\in\bbR^{n\times k}$. Most commonly,
the MAXBET\footnote {MAXBET is a statistical term that appeared first in \cite{vdge:1984} in 1984
and then in \cite{tebe:1988}.}
subproblem can be turned into this form \cite[subsection~5.1]{wazl:2022a} and so can
the unbalanced Procrustes problem \cite{chtr:2001,edas:1999,elpa:1999,godi:2004,huca:1962,zhdu:2006}.

When $D=0$ and $A\succeq 0$, \eqref{eq:OptSTM-MAXBET} is an equivalent problem for PCA.
Hence the associated $\OptSM_{2,1}$~\eqref{eq:OptSTM+L21} can be considered as a mathematical model for sparse PCA.
In particular, as $\alpha\to\infty$, it may pick up the most significant
$k\times k$ principle submatrix of $A$.

Objective $g(P)$ of \eqref{eq:OptSTM-MAXBET} takes the form \eqref{eq:g-cvx-comp}:
\begin{equation}\label{eq:obj-MAXBET}
g(P)=\psi\circ T_0(P)
\quad\mbox{with}\quad
\psi(\bx)=-x_1+2x_2, \quad
T_0(P)=\begin{bmatrix}
         \tr(P^{\T}AP) \\
         \tr(P^{\T}D)
       \end{bmatrix}.
\end{equation}
Since $\psi(\bx)$ is linear in $\bx\equiv[x_i]\in\bbR^2$, it is convex for $\bx\in\bbR^2$. Both
$\tr(P^{\T}AP)$ and $\tr(P^{\T}D)$ are atomic functions for NEPv, according to \Cref{tbl:AF-NEPv}, and more
importantly, \eqref{eq:cond4AF-NEPv:cvx-b} for $\tr(P^{\T}AP)$ is an equality.
% meaning that the sign of $\psi_1(\bx)=-1$ is fine as far as the applicability of \Cref{thm:main-NEPv-cvx} is concerned.

For $\OptSM_{2,1}^{\varepsilon}$~\eqref{eq:OptSTM+L21:eps} with $g(P)$  in \eqref{eq:OptSTM-MAXBET}, we have, corresponding to \eqref{eq:OptSTM+L21:NEPv'-eps} and \eqref{eq:KKT+L21:work-eps},
\begin{align}
\scrH_{\varepsilon_0}(P) %:=\frac {\partial f_{\varepsilon_0}(P)}{\partial P}
    &=2AP+2D-\alpha\sum_{i=1}^n\frac {\be_i\be_i^{\T}P}{\sqrt{\tr(P^{\T}\be_i\be_i^{\T}P)+\varepsilon_0^2}},
         \label{eq:partD-MAXBET+L21:scrH-eps} \\
H_{\varepsilon_0}(P)
     &=2A+2\big(DP^{\T}+PD^{\T})
       -\alpha\sum_{i=1}^n\frac {\be_i\be_i^{\T}}{\sqrt{\tr(P^{\T}\be_i\be_i^{\T}P)+\varepsilon_0^2}}.
         \label{eq:opt-MAXBET+L21:H(P)-eps}
\end{align}
It can be verified that $H_{\varepsilon_0}(P)P-\scrH_{\varepsilon_0}(P)=P\big(2D^{\T}P\big)$ for $P\in\bbO^{n\times k}$.
%\begin{equation}\label{eq:H(p)-scrH(P)-eps:MAXBET}
%H_{\varepsilon_0}(P)P-\scrH_{\varepsilon_0}(P)=P\big(2D^{\T}P\big).
%\end{equation}
Thus by \Cref{thm:H(P)-eligibility}, $P\in\bbO^{n\times k}$ is a solution to the KKT condition \eqref{eq:KKT+L21:work-eps-1} with
\eqref{eq:partD-MAXBET+L21:scrH-eps} if and only if
it is a solution to NEPv \eqref{eq:OptSTM+L21:NEPv-eps} with \eqref{eq:opt-MAXBET+L21:H(P)-eps}
and $D^{\T}P$ is  symmetric.
A reasonable normalization quantity $\xi$ in \eqref{eq:stop-1} for the stopping criterion is
$$
\xi=2\|A\|_{\F}+2\|D\|_{\F}+n\alpha.
$$

%\Cref{thm:main-NEPv-cvx+L21:eps} and \Cref{alg:NEPvSCF4+L21} are now readily applicable, and so is
For \Cref{alg:NEPvLOCG}, we notice from \eqref{eq:NEPv-LOCG-2} that now
\begin{equation}
\wtd H_{\varepsilon_0}(Z)
   =2\wtd A+2\big(\wtd DZ^{\T}+Z\wtd D^{\T})
    -\alpha\sum_{i=1}^n\frac {\bw_i\bw_i^{\T}}{\sqrt{\tr(Z^{\T}\bw_i\bw_i^{\T}Z)+\varepsilon_0^2}}
   \in\bbR^{m\times m}, \label{eq:NEPv-LOCG:MAXBET}
\end{equation}
where $\wtd A=W^{\T}AW$, $\wtd D=W^{\T}D$.
Previously in \Cref{rk:SCF4npd+LOCG} we mentioned some saving can be achieved from properly implementing
\Cref{alg:NEPvLOCG}. For the case, we may use
        $$
        AP^{(j+1)}=(AW)Z_{\opt}, \quad [P^{(j+1)}]^{\T}AP^{(j+1)}=Z_{\opt}^{\T}(W^{\T}AW)Z_{\opt}
        $$
        to compute
        the next $AP^{(j+1)}$ and $[P^{(j+1)}]^{\T}AP^{(j+1)}$ at  costs $O(nk^2)$ and $O(k^3)$, respectively,
        instead of $O(n^2k)$ by reusing $AW$ and $W^{\T}AW$.

%We point out in passing that, for the case $D=0$, solving \eqref{eq:OptSTM+L21:eps} with $g(P)=\tr(P^{\T}AP)$
%is essentially computing a sparse PCA on $A$.

\subsection{$\Theta$-trace ratio with the $(2,1)$-norm regularization}\label{ssec:ThetaTR+L21}
In \cite{wazl:2023}, the $\theta$-trace ratio ($\Theta$TR)  problem was investigated, together
with an application to multi-view learning. The objective takes
the form
\begin{equation}\label{eq:obj-ThetaTR:0}
\frac {\tr(P^{\T}A_2P)+\tr(P^{\T}D)}{[\tr(P^{\T}A_1P)]^{\theta}},
\end{equation}
where  $0\le\theta\le 1$ is a contrastive parameter. Although it encompasses the objectives for LDA, OCCA, and
MAXBET,
it in general cannot be written as a convex composition of three matrix trace functions:
$\tr(P^{\T}A_1P)$, $\tr(P^{\T}A_2P)$ and $\tr(P^{\T}D)$. However, its square is, albeit for $0\le\theta\le 1/2$ only \cite[Remark~8.1]{li:2024}. For this reason, we will have to restrict $\theta$ to
$0\le\theta\le 1/2$ first and then offer a workaround for $1/2<\theta\le 1$.

\subsubsection{Case $0\le\theta\le 1/2$}\label{sssec:TTR-1}
Assume $0\le\theta\le 1/2$ and consider
\begin{subequations}\label{eq:obj-ThetaTR}
\begin{equation}\label{eq:obj-ThetaTR-1}
g(P)=\left[\frac {\tr(P^{\T}A_2P)+\tr(P^{\T}D)}{[\tr(P^{\T}A_1P)]^{\theta}}\right]^2
    =\psi\circ T_0(P),
\end{equation}
where $\bbR^{n\times n}\ni A_1,\, A_2\succeq 0$ with $\rank(A_1)>n-k$, $D\in\bbR^{n\times k}$, and
\begin{equation}\label{eq:obj-ThetaTR-2}
\psi(\bx)=\frac {(x_2+x_3)^2}{x_1^{2\theta}},\quad
T_0(P)=\begin{bmatrix}
                             \tr(P^{\T}A_1P) \\
                             \tr(P^{\T}A_2P) \\
                             \tr(P^{\T}D)
                           \end{bmatrix}.
\end{equation}
\end{subequations}
For $0\le\theta\le 1/2$, $\psi(\bx)$ is convex
\cite[Remark~8.1]{li:2024} for
\begin{equation}\label{eq:frakD-TTR}
\bx\in\mathfrak{D}=\{\bx\equiv[x_i]\in\bbR^3\,:\,x_1>0,\,x_2+x_3\ge 0\}.
\end{equation}
For $\OptSM_{2,1}^{\varepsilon}$~\eqref{eq:OptSTM+L21:eps} with $g(P)$  in \eqref{eq:obj-ThetaTR}, we have, corresponding to \eqref{eq:OptSTM+L21:NEPv'-eps} and \eqref{eq:KKT+L21:work-eps},
\begin{align}
\scrH_{\varepsilon_0}(P)%:=\frac {\partial f_{\varepsilon_0}(P)}{\partial P}
    &=\psi_1(\bx)2A_1P+\psi_2(\bx)2A_2P+\psi_3(\bx)D
      -\alpha\sum_{i=1}^n\frac {\be_i\be_i^{\T}P}{\sqrt{\tr(P^{\T}\be_i\be_i^{\T}P)+\varepsilon_0^2}},
         \label{eq:partD-ThetaTR+L21:scrH-eps} \\
H_{\varepsilon_0}(P)
     &=\psi_1(\bx)2A_1+\psi_2(\bx)2A_2+\psi_3(\bx)\big(DP^{\T}+PD^{\T})
       -\alpha\sum_{i=1}^n\frac {\be_i\be_i^{\T}}{\sqrt{\tr(P^{\T}\be_i\be_i^{\T}P)+\varepsilon_0^2}},
         \label{eq:opt-ThetaTR+L21:H(P)-eps}
\end{align}
where $\bx\equiv[x_i]=T_0(P)$, and partial derivatives
$$
\psi_1(\bx)=-2\theta\frac {(x_2+x_3)^2}{x_1^{2\theta+1}}, \quad
\psi_2(\bx)=\psi_3(\bx)=2\frac {x_2+x_3}{x_1^{2\theta}}.
$$
It can be verified that $H_{\varepsilon_0}(P)P-\scrH_{\varepsilon_0}(P)=P\big(\psi_3(\bx)D^{\T}P\big)$ for $P\in\bbO^{n\times k}$.
%\begin{equation}\label{eq:H(p)-scrH(P)-eps:ThetaTR}
%H_{\varepsilon_0}(P)P-\scrH_{\varepsilon_0}(P)=P\big(\psi_3(\bx)D^{\T}P\big).
%\end{equation}
Thus by \Cref{thm:H(P)-eligibility}, $P\in\bbO^{n\times k}$ is a solution to the KKT condition \eqref{eq:KKT+L21:work-eps-1} with
\eqref{eq:partD-ThetaTR+L21:scrH-eps} if and only if
it is a solution to NEPv \eqref{eq:OptSTM+L21:NEPv-eps} with \eqref{eq:opt-ThetaTR+L21:H(P)-eps}
and $D^{\T}P$ is  symmetric.
A reasonable normalization quantity $\xi$ in \eqref{eq:stop-1} for the stopping criterion is
$$
\xi=\psi_1(\bx)2\|A_1\|_{\F}+\psi_2(\bx)2\|A_2\|_{\F}+\psi_3(\bx)\|D\|_{\F}+n\alpha.
$$

For \Cref{alg:NEPvLOCG}, we notice from \eqref{eq:NEPv-LOCG-2} that now
%\Cref{thm:main-NEPv-cvx+L21:eps} and \Cref{alg:NEPvSCF4+L21} are now readily applicable, and so is
%\Cref{alg:NEPvLOCG}, upon noticing from \eqref{eq:NEPv-LOCG-2} that now
\begin{equation}
\wtd H_{\varepsilon_0}(Z)
   =\psi_1(\bx)2\wtd A_1+\psi_2(\bx)2\wtd A_2+\psi_3(\bx)\big(\wtd DZ^{\T}+Z\wtd D^{\T})
    -\alpha\sum_{i=1}^n\frac {\bw_i\bw_i^{\T}}{\sqrt{\tr(Z^{\T}\bw_i\bw_i^{\T}Z)+\varepsilon_0^2}}, \label{eq:NEPv-LOCG:ThetaTR}
\end{equation}
where $\wtd A_i=W^{\T}A_iW$, $\wtd D=W^{\T}D$, and $\bx=T_0(WZ)$.
As we commented in  subsection~\ref{ssec:MAXBET+L21}, some saving can be achieved from properly implementing
\Cref{alg:NEPvLOCG} as well.

\subsubsection{A workaround for case $1/2<\theta\le 1$}\label{sssec:TTR-2}
Just moments ago, we had to limit $\theta$ to $0\le\theta\le 1/2$ in order to ensure $\psi(\bx)$ in \eqref{eq:obj-ThetaTR}
is convex in $\mathfrak{D}$ of \eqref{eq:frakD-TTR}.
That can be unsatisfactory sometimes. Recall the goal of having the parameter $\theta$ in \eqref{eq:obj-ThetaTR:0}
is to contrast $\tr(P^{\T}A_2P)+\tr(P^{\T}D)$ against $[\tr(P^{\T}A_1P)]^{\theta}$ as $\theta$ varies from $0$ to $1$.
We were able to do that in \cite{wazl:2023} when the $(2,1)$-norm regularization is not present, but with the regularization,
our theory falls short to cover  $1/2<\theta\le 1$.

What if we still need to compare $\tr(P^{\T}A_2P)+\tr(P^{\T}D)$ against $[\tr(P^{\T}A_1P)]^{\theta}$ for $\theta$
varying in $(1/2,1]$? Roughly speaking, what we are trying to do is to compare $A_2$ and $D$ together with some fraction power of $A_1$.
With that in mind, we arrive at the following remedy: instead of $g(P)$ as in \eqref{eq:obj-ThetaTR-1}, we use
\begin{equation}\label{eq:obj-ThetaTR-modified}
\hat g(P)=\left[\frac {\tr(P^{\T}A_2P)+\tr(P^{\T}D)}{[\tr(P^{\T}A_1^2P)]^{\theta/2}}\right]^2,
\end{equation}
for $1/2<\theta\le 1$.
%Roughly speaking, for modelling purpose, we may regard that $\theta$ in \eqref{eq:obj-ThetaTR-1}
%and $\theta/2$ in \eqref{eq:obj-ThetaTR-modified} play the same role. In other words, the objective $\hat g(P)$ of
%\eqref{eq:obj-ThetaTR-modified}
%for $1/4\le\theta\le 1/2$ will do comparable things to the objective $g(P)$ of \eqref{eq:obj-ThetaTR-1} for $1/2\le\theta\le 1$.
Evidently,
\begin{equation}\tag{\ref{eq:obj-ThetaTR-modified}$'$}
\hat g(P)=\hat\psi\circ \what T_0(P)
\quad\mbox{with}\quad
\hat\psi(\bx)=\frac {(x_2+x_3)^2}{x_1^{\theta}},\,\,
\what T_0(P)=\begin{bmatrix}
                             \tr(P^{\T}A_1^2P) \\
                             \tr(P^{\T}A_2P) \\
                             \tr(P^{\T}D)
                           \end{bmatrix}.
\end{equation}
Every statement
we made in subsubsection~\ref{sssec:TTR-1} can be copied verbatim with $A_1$ replaced by $A_1^2$
and $\theta$ by $\theta/2$.

\subsection{LDA with the $(2,1)$-norm regularization}\label{ssec:LDA+L21}
The objective of LDA is a special case of $\Theta$TR \eqref{eq:obj-ThetaTR:0},
or equivalently \eqref{eq:obj-ThetaTR} with $\theta=1$ and $D=0$. It is not clear, if at all possible, how to make a convex
composition of $\tr(P^{\T}A_2P)$ and $\tr(P^{\T}A_1P)$ out of the function in \eqref{eq:obj-ThetaTR:0} for the case  $\theta=1$ and $D=0$ such that maximizing the function
is equivalent to maximizing the composition over the Stiefel manifold. So we return to the suggestion we made
in subsubsection~\ref{sssec:TTR-2}: using, instead,
\begin{equation}\label{eq:obj-LDA-modified}
g(P)=\frac {[\tr(P^{\T}A_2P)]^2}{\tr(P^{\T}A_1^2P)},
\end{equation}
%i.e., letting $\theta=1/2$ and $D=0$ in \eqref{eq:obj-ThetaTR-modified}.
Now we can reuse all formulas in subsubsection~\ref{sssec:TTR-1}
upon setting $\theta=1/2$ and $D=0$ and substituting $A_1^2$ for $A_1$.

\subsection{OCCA with the $(2,1)$-norm regularization}\label{ssec:OCCA+L21}
The workhorse for solving OCCA in \cite{zhwb:2022} is the NEPv approach for the OCCA subproblem whose objective is
exactly $\Theta$TR \eqref{eq:obj-ThetaTR:0} with $\theta=1/2$ and $A_2=0$. To incorporate the $(2,1)$-norm regularization,
we let $g(P)$ be the square of it, i.e.,
\begin{equation}\label{eq:obj-OCCA}
g(P)=\frac {[\tr(P^{\T}D)]^2}{\tr(P^{\T}AP)},
\end{equation}
where we have substitute $A$ for $A_1$.
All formulas in subsubsection~\ref{sssec:TTR-1}, upon minor modifications in notation, can be reused.
% Hence we can use $g(P)$
%in \eqref{eq:obj-ThetaTR} with $\theta=1/2$ and $A_2=0$ and the rest of subsubsection~\ref{sssec:TTR-1}.
More detail can be found in \cite{wazl:2026}.

\subsection{A more general problem}\label{ssec:GL+L21}
In view of \Cref{tbl:AF-NEPv}, in general, our development in \cref{sec:NEPv+L21} works for
$g(P)$ as in \eqref{eq:g-cvx-comp} with
$$
T_0(P)=\begin{bmatrix}
         T_1(P) \\
         T_2(P) \\
         T_3(P)
       \end{bmatrix}\in\bbR^N
$$
where $N=N_1+N_2+1$, and
$$
T_1(P)=\begin{bmatrix}
         \tr((P^{\T}A_1P)^{m_1}) \\
         \vdots \\
         \tr(P^{\T}A_{N_1}P)^{m_{N_1}})
       \end{bmatrix},\,\,
T_2(P)=\begin{bmatrix}
         \tr((P^{\T}B_1P) \\
         \vdots \\
         \tr(P^{\T}B_{N_2}P)
       \end{bmatrix},\,\,
T_3(P)= \tr((P^{\T}D)^m),
$$
where not all of the three $T_i$ have to be present,
%Up to any two of $T_i$ for $1\le i\le 3$ are allowed  not to be present,
and depending on their presences,
it is assumed that integer $m_i>1$ and $\bbR^{n\times n}\ni A_i\succeq 0$ for $1\le i\le N_1$, $B_i\in\bbR^{n\times n}$ for $1\le i\le N_2$
are merely symmetric, and integer $m\ge 1$,
and $\psi(\bx)$ is convex in $\bx\in\mathfrak{D}\subseteq\bbR^N$ with partial derivative $\psi_i(\bx)\ge 0$ for $i$ corresponding to the components
associated with $T_1$ and $T_3$, $\bbP=\bbO_{D_+}^{n\times k}$ if $T_3$ presents but $\bbP=\bbO^{n\times k}$ otherwise.

In the case when all $m_i=2$, the requirement that all $A_i\succeq 0$ can be removed by shifting $A_i$ to transfer the case to one we just considered, as in \cite[Example~8.2]{li:2024}.

\section{Numerical Experiments}\label{sec:egs}
We present numerical results in MATLAB for two problems:
\begin{enumerate}[(1)]
  \item the $(2,1)$-norm regularized MAXBET with $g(P)$ as in \eqref{eq:OptSTM-MAXBET},
  \item the $(2,1)$-norm regularized OCCA with $g(P)$ as in \eqref{eq:obj-OCCA}.
\end{enumerate}
Objectives for both problems are defined by the same two matrices $A\in\bbR^{n\times n}$ and $D\in\bbR^{n\times k}$. In our experiments, we randomly generate and save
\begin{equation}\label{eq:generateBD}
T={\tt randn}(n),\,
A=T{\tt *}T', \,
D=T{\tt *}{\tt randn}(n,k),
\end{equation}
as well as the same initial $P_0={\tt orth}({\tt randn}(n,k))$, for reproducibility,
where $n=1000$ and $k=10$, {\tt randn} is MATLAB's random matrix generator, and {\tt orth} is as before.
%$P_0$ is a preselected initial guess that will be used for all.
It is noted that $A\succeq 0$ in \eqref{eq:generateBD} that is required for the regularized OCCA problem
but not necessary for the regularized MAXBET problem for which
our NEPv approach is still guaranteed to work for symmetric $A$.
Although what follows is about just one set of $A$, $D$, and $P_0$, we have conducted  experiments with numerous different
sets of random
$A$, $D$, and $P_0$ constructed in the same way (with varying $n$ and $k$ as well), albeit the corresponding results are
not reported here due to similarity in behavior.

\begin{figure}[t]
{\centering
\begin{tabular}{ccc}
\resizebox*{0.28\textwidth}{0.15\textheight}{\includegraphics{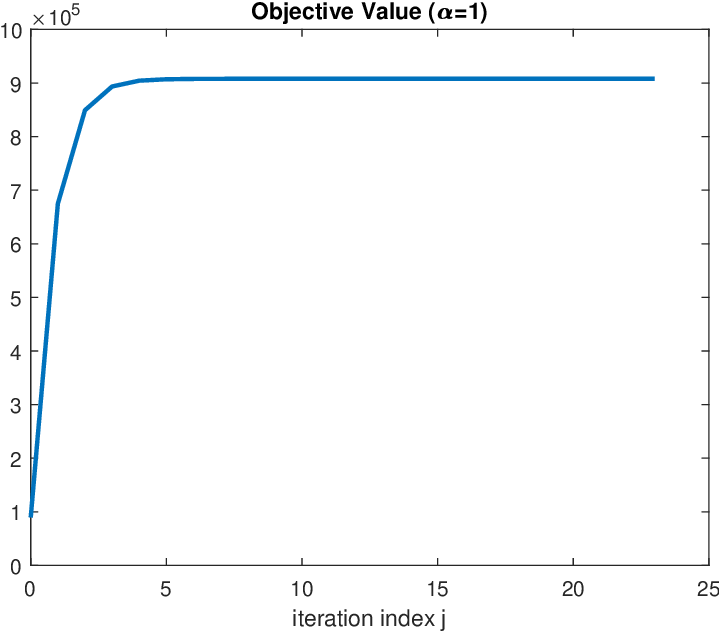}}
  & \resizebox*{0.28\textwidth}{0.15\textheight}{\includegraphics{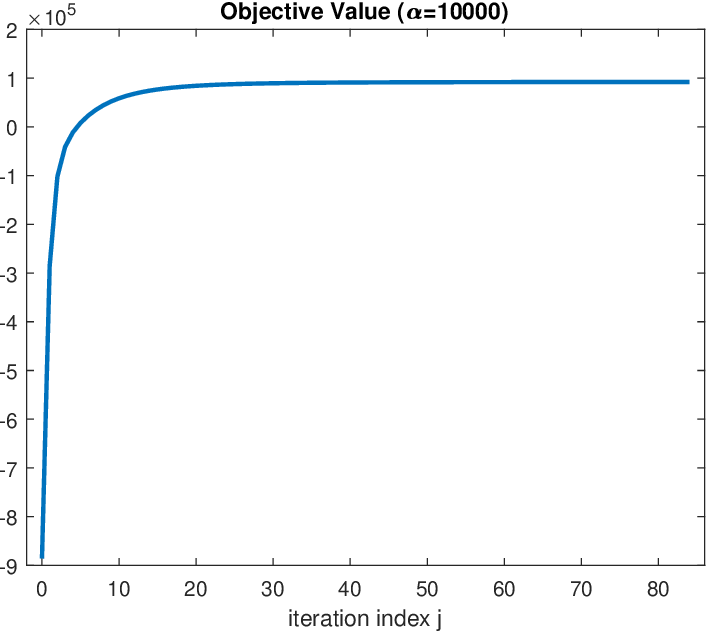}}
  & \resizebox*{0.28\textwidth}{0.15\textheight}{\includegraphics{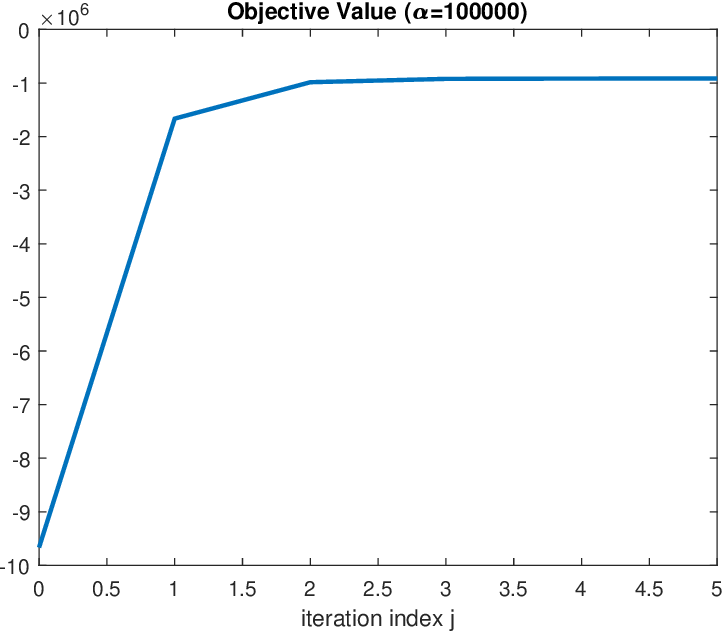}} \\
\resizebox*{0.28\textwidth}{0.15\textheight}{\includegraphics{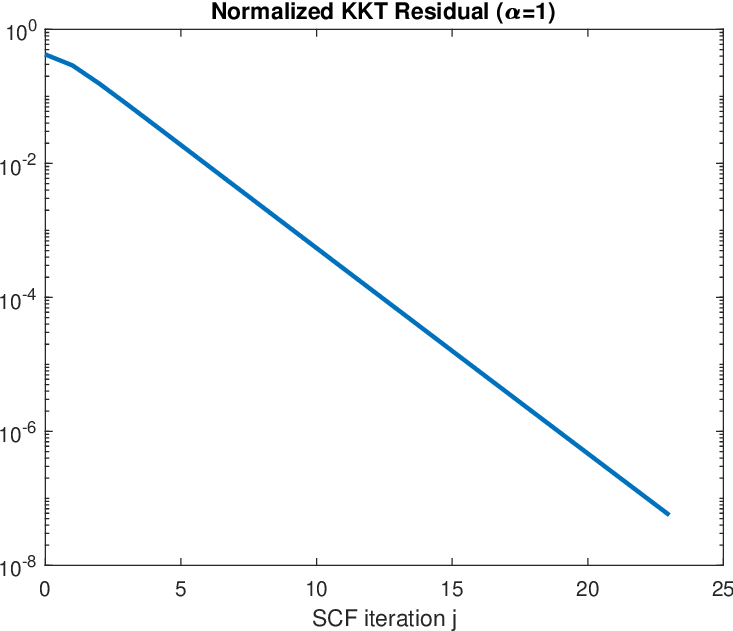}}
  & \resizebox*{0.28\textwidth}{0.15\textheight}{\includegraphics{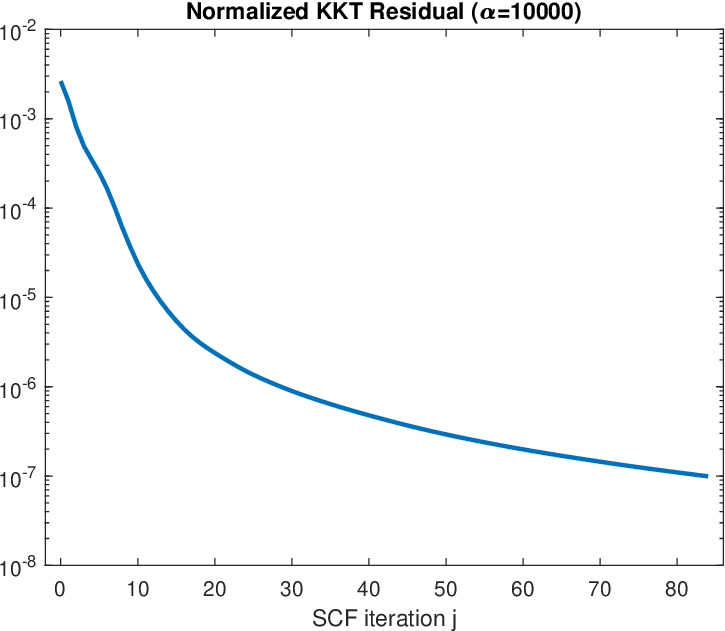}}
  & \resizebox*{0.28\textwidth}{0.15\textheight}{\includegraphics{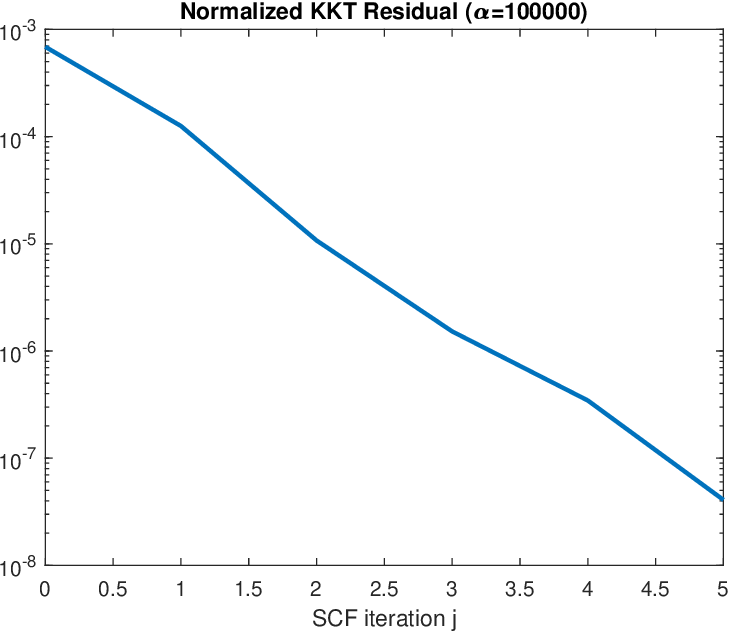}} \\
\resizebox*{0.28\textwidth}{0.15\textheight}{\includegraphics{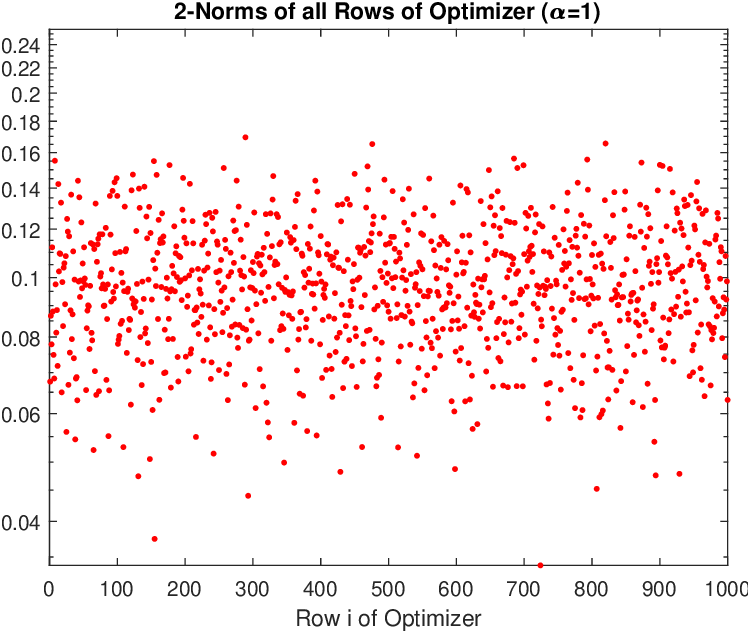}}
  & \resizebox*{0.28\textwidth}{0.15\textheight}{\includegraphics{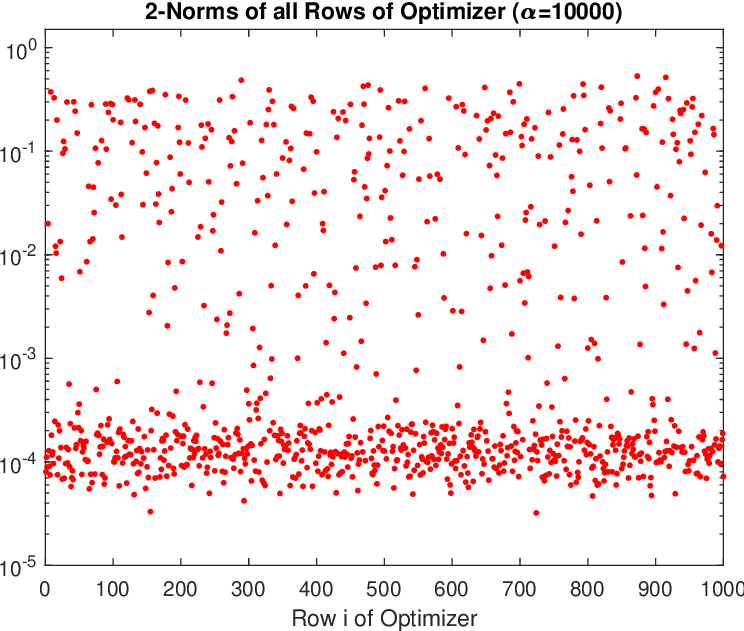}}
  & \resizebox*{0.28\textwidth}{0.15\textheight}{\includegraphics{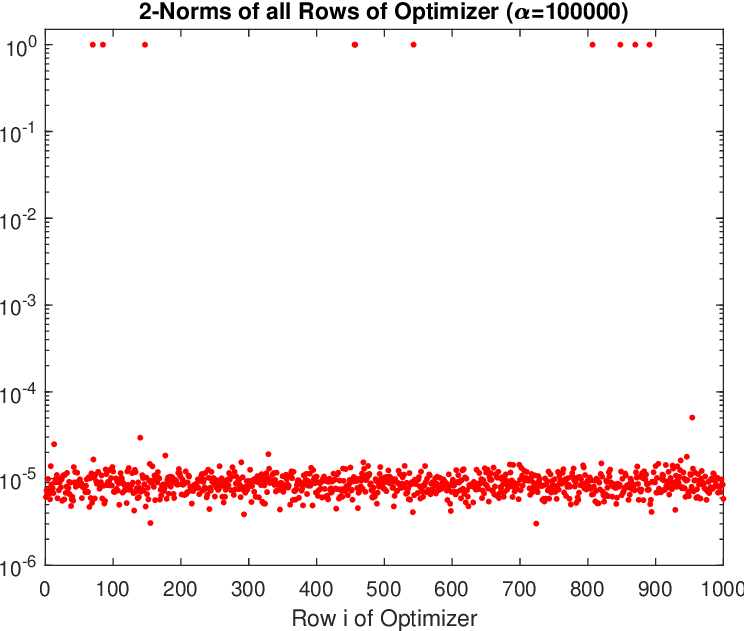}} \\
\end{tabular}\par
}
\vspace{-0.15 cm}
\caption{\small Regularized MAXBET problem with $\alpha=1$ (1st column), $10^4$ (second column), and $10^5$ (third column) by
  SCF  (\Cref{alg:NEPvSCF4+L21} with {\tt eig}). Associated with the last plot, the nontrivial rows of computed maximizer $P$ are its row 70, 85, 147, 456, 457, 543, 807, 848, 870, and 891 with the two collapsing dots for row 456 and 457.
  %The last plot has 10 dots on par with   $10^0$ but the two dots for rows 456 and 457 collapse together.
  }
\label{fig:MAXBET:SCF}
\end{figure}

%\iffalse
\begin{figure}[t]
{\centering
\begin{tabular}{ccc}
\resizebox*{0.28\textwidth}{0.15\textheight}{\includegraphics{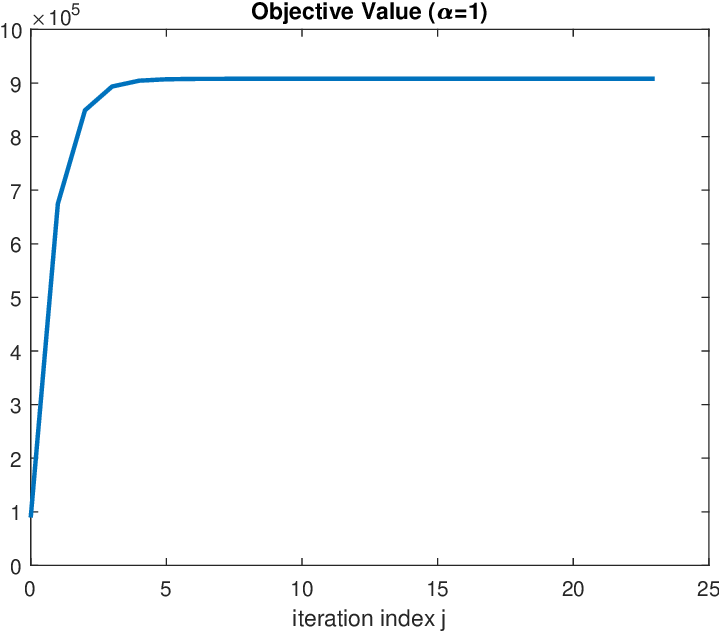}}
  & \resizebox*{0.28\textwidth}{0.15\textheight}{\includegraphics{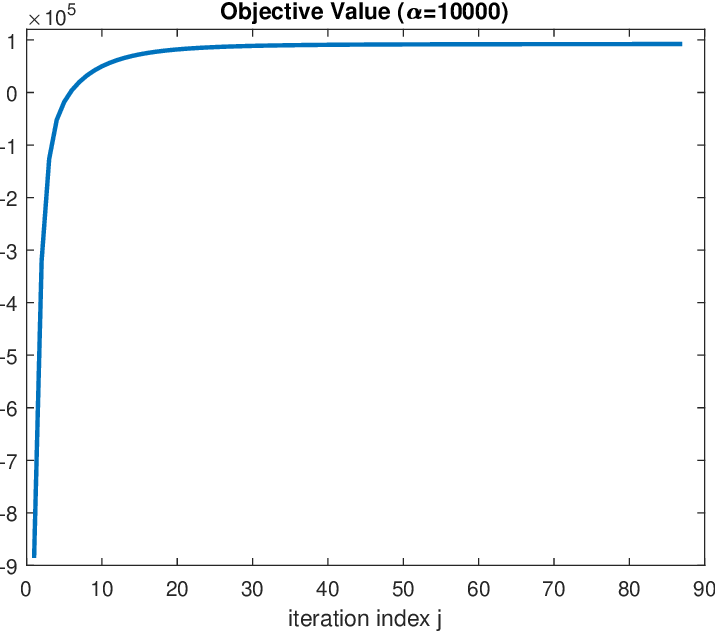}}
  & \resizebox*{0.28\textwidth}{0.15\textheight}{\includegraphics{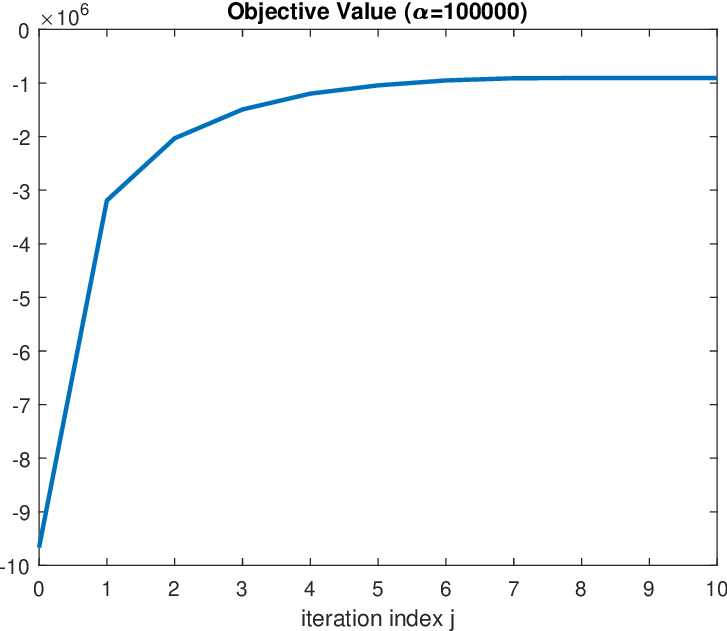}} \\
\resizebox*{0.28\textwidth}{0.15\textheight}{\includegraphics{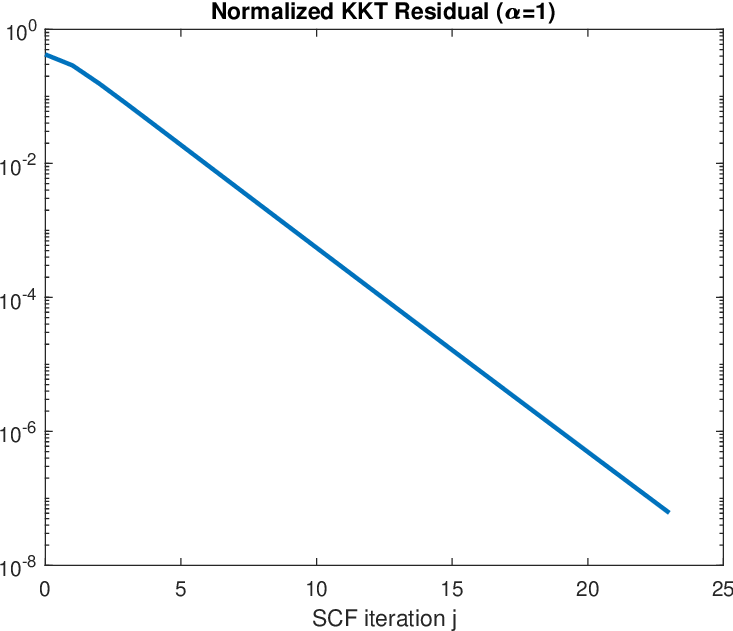}}
  & \resizebox*{0.28\textwidth}{0.15\textheight}{\includegraphics{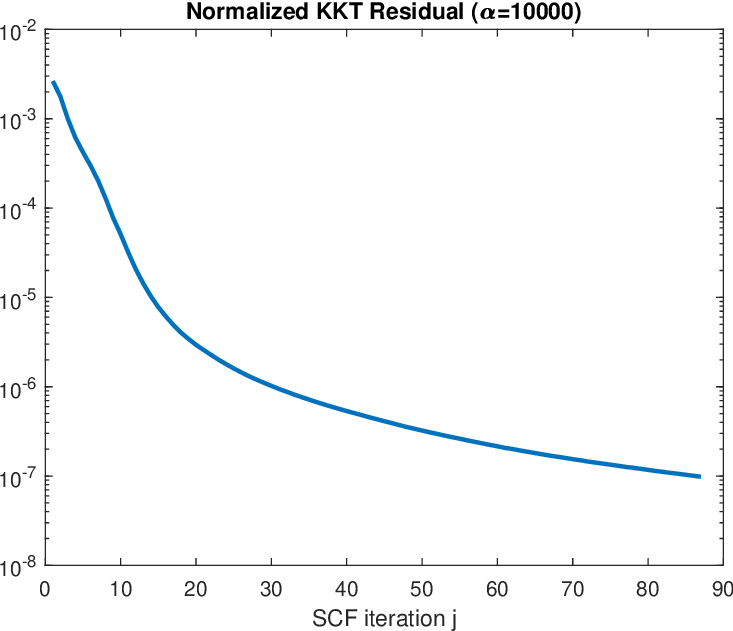}}
  & \resizebox*{0.28\textwidth}{0.15\textheight}{\includegraphics{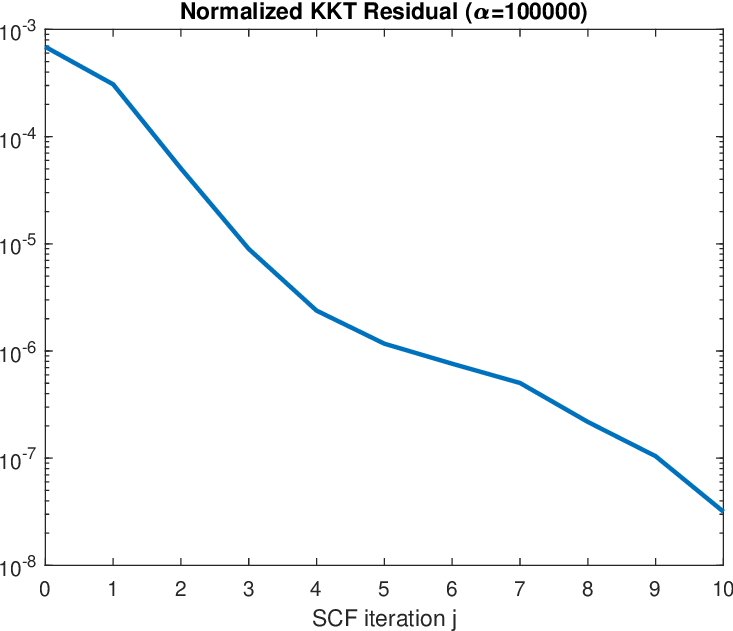}} \\
\resizebox*{0.28\textwidth}{0.15\textheight}{\includegraphics{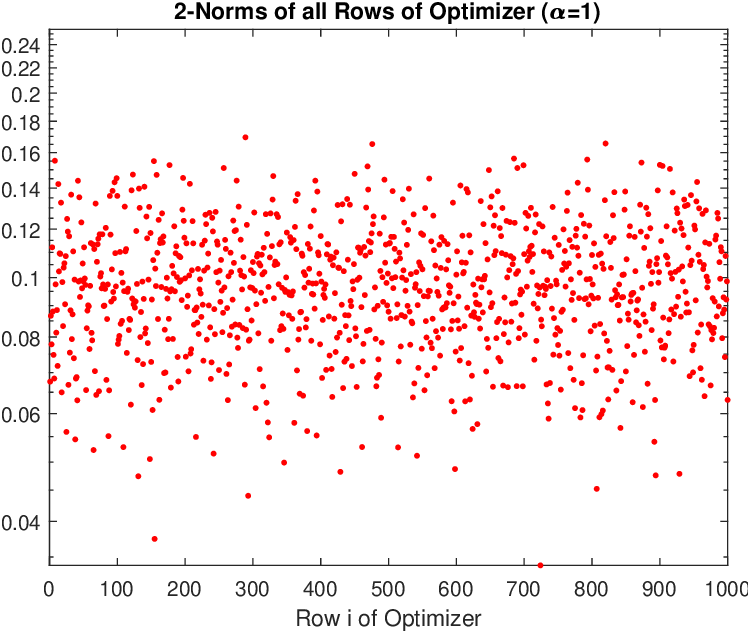}}
  & \resizebox*{0.28\textwidth}{0.15\textheight}{\includegraphics{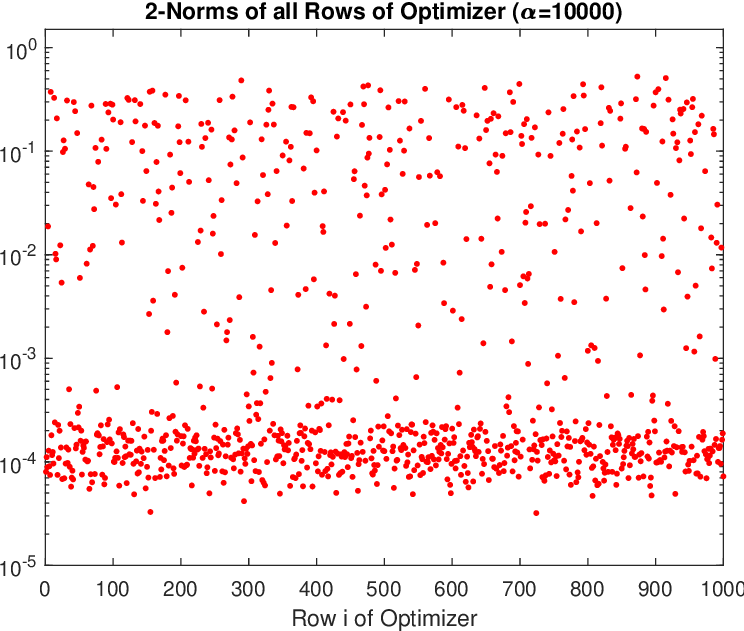}}
  & \resizebox*{0.28\textwidth}{0.15\textheight}{\includegraphics{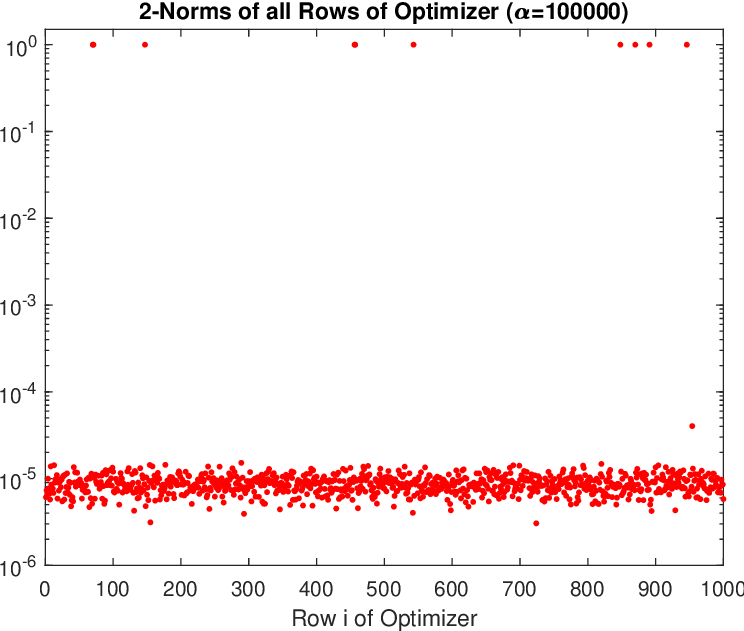}} \\
\end{tabular}\par
}
\vspace{-0.15 cm}
\caption{\small Regularized MAXBET problem with $\alpha=1$ (1st column), $10^4$ (second column), and $10^5$ (third column) by
  SCF  (\Cref{alg:NEPvSCF4+L21} with LOBPCG but without any preconditioning). Associated with the last plot, the nontrivial rows
  of computed maximizer $P$ are its row 8, 154, 289, 476, 685, 699, 793, 820, 873, and 904.
%  It can be observed that the objective (first row) monotonically increases
%  and the normalized KKT residual $\varepsilon_{\KKT}$ as defined in \eqref{eq:stop-1} goes towards $0$.
%  Smaller regularizing parameter $\alpha$ does not induce sparse rows. In fact,
%  row sparsity begins to show at $\alpha=10^4$ but completely emerges at $10^5$. It seems that there are just 9 dots on par with
%  $10^0$ in the last plot, but there are actually 10 dots due to that the two dots for rows 456 and 457 are visually inseparable.
  }
\label{fig:MAXBET:SCF-LOBPCG}
\end{figure}
%\fi

\begin{figure}[t]
{\centering
\begin{tabular}{ccc}
\resizebox*{0.28\textwidth}{0.15\textheight}{\includegraphics{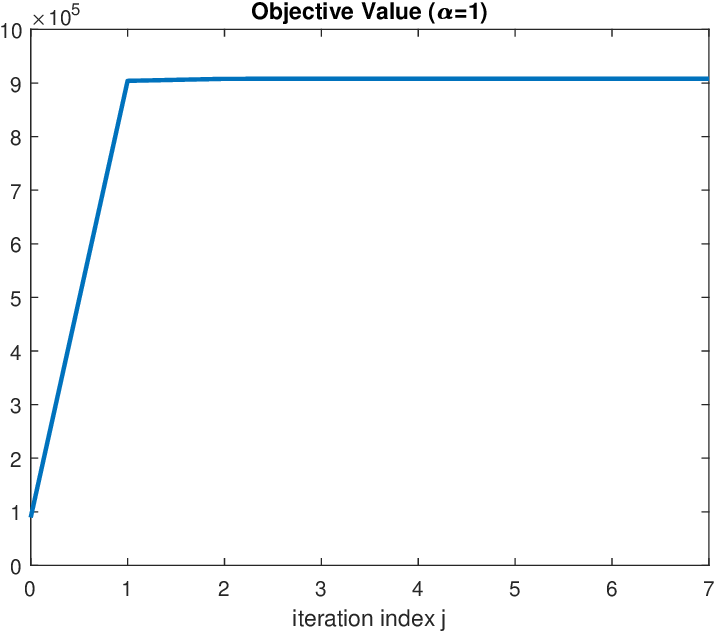}}
  & \resizebox*{0.28\textwidth}{0.15\textheight}{\includegraphics{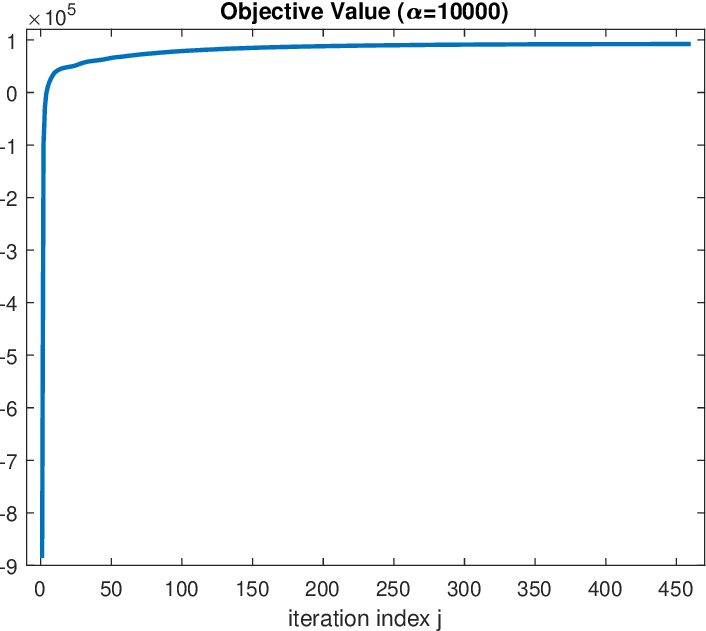}}
  & \resizebox*{0.28\textwidth}{0.15\textheight}{\includegraphics{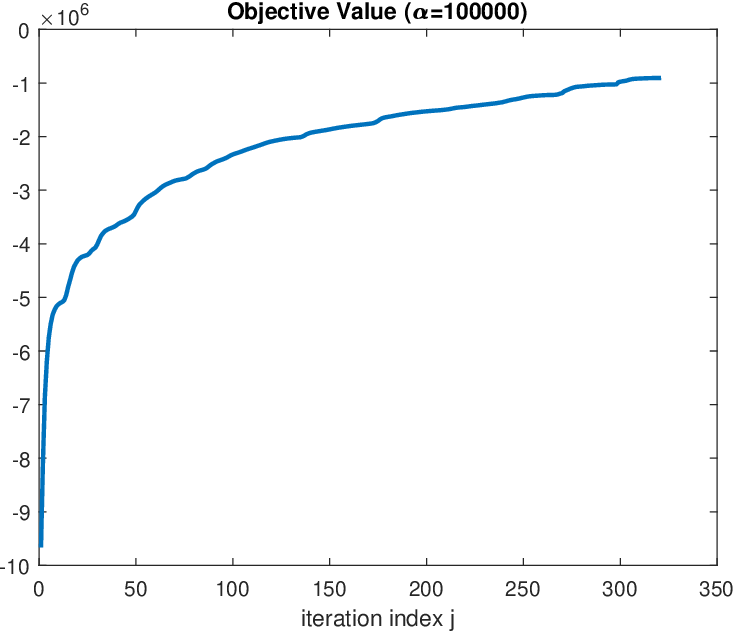}} \\
\resizebox*{0.28\textwidth}{0.15\textheight}{\includegraphics{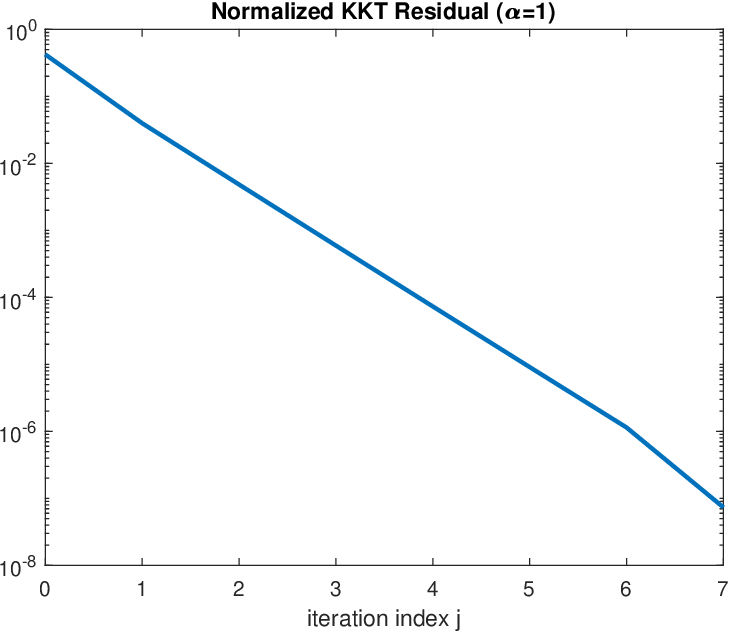}}
  & \resizebox*{0.28\textwidth}{0.15\textheight}{\includegraphics{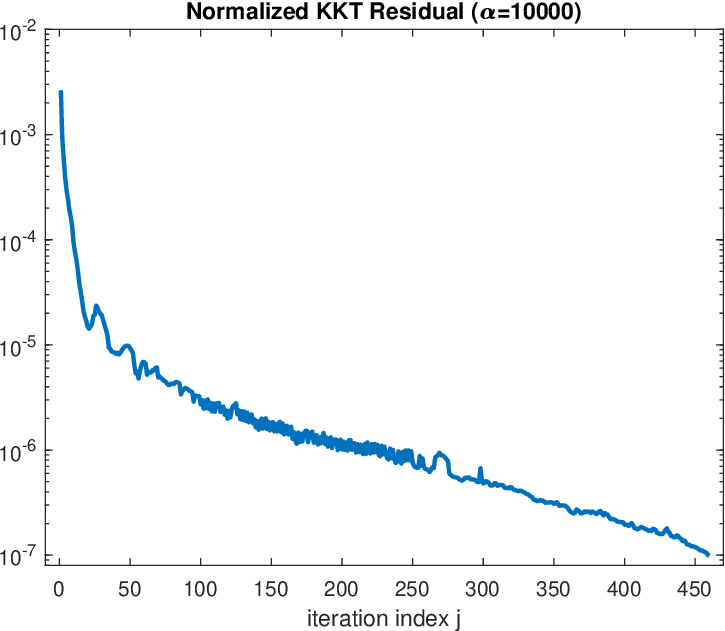}}
  & \resizebox*{0.28\textwidth}{0.15\textheight}{\includegraphics{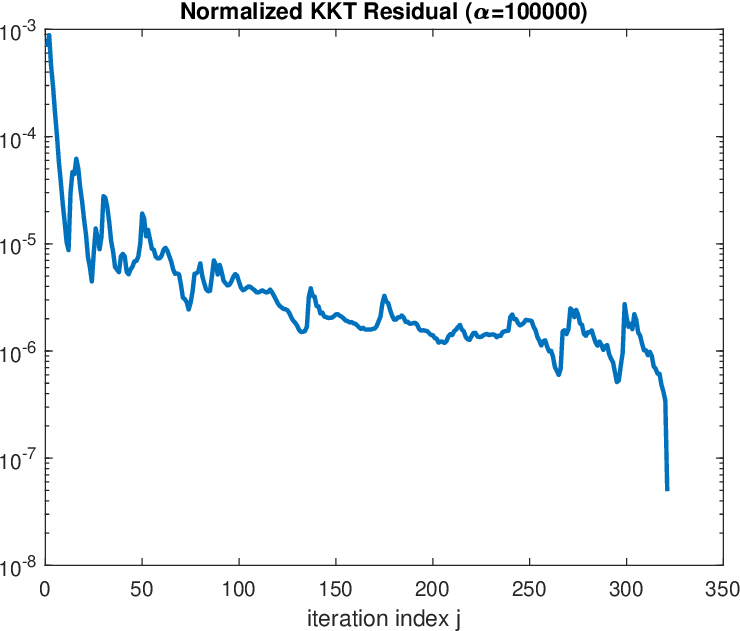}} \\
\resizebox*{0.28\textwidth}{0.15\textheight}{\includegraphics{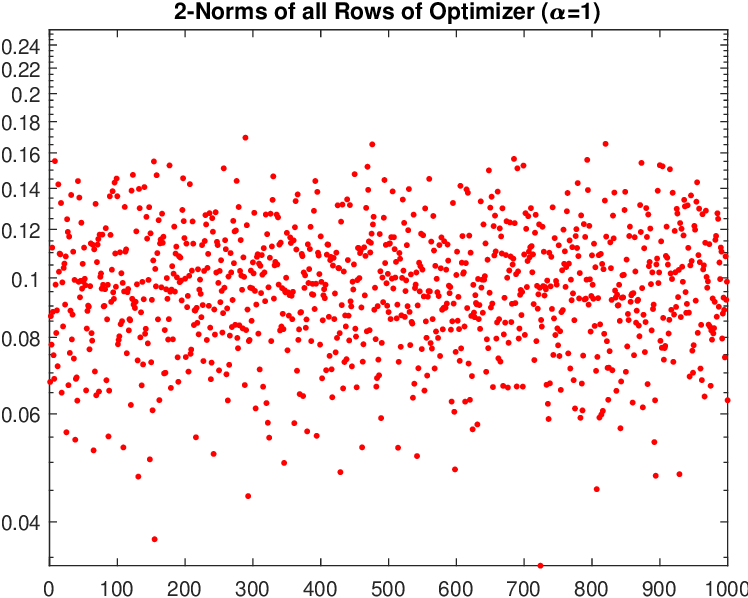}}
  & \resizebox*{0.28\textwidth}{0.15\textheight}{\includegraphics{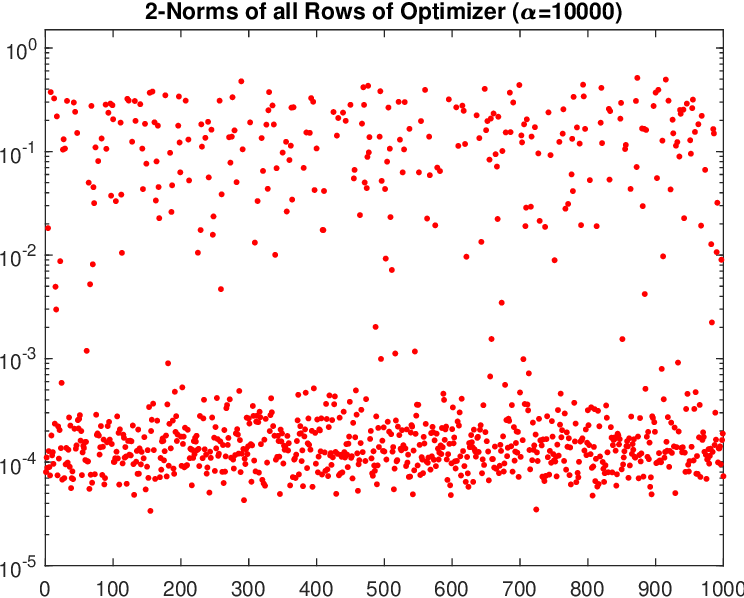}}
  & \resizebox*{0.28\textwidth}{0.15\textheight}{\includegraphics{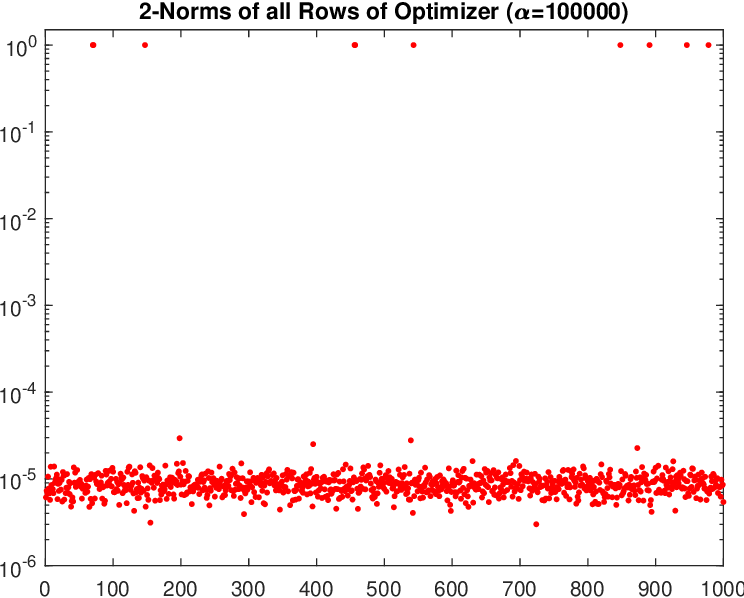}} \\
\end{tabular}\par
}
\vspace{-0.15 cm}
\caption{\small Regularized MAXBET problem with $\alpha=1$ (1st column), $10^4$ (second column), and $10^5$ (third column) by
  SCF with acceleration (\Cref{alg:NEPvLOCG}). Associated with the last plot, the nontrivial rows of computed maximizer $P$ are its row 70, 71, 147, 456, 457, 543, 848, 891, 946, and 978.
  %The last plot has 10 dots on par with   $10^0$ but the two dots for rows 70 and 71 and the ones for rows 456 and 457 collapse together.
%  It can be observed that the objective (durst row) monotonically increases
%  and the normalized KKT residual $\varepsilon_{\KKT}$ as defined in \eqref{eq:stop-1} goes towards $0$.
%  Smaller regularizing parameter $\alpha$ does not induce sparse rows. In fact,
%  row sparsity begins to show at $\alpha=10^4$ but completely emerges at $10^5$. It seems that there are just 9 dots on par with
%  $10^0$ in the last plot, but there are actually 10 dots due to that the two dots for rows 456 and 457 are visually inseparable.
  }
\label{fig:MAXBET:SCFacc}
\end{figure}

We will test three different methods to solve the two problems:
\begin{enumerate}[(a)]
  \item \Cref{alg:NEPvSCF4+L21} with MATLAB's {\tt eig} to solve the SEP at its line 3,
  \item \Cref{alg:NEPvSCF4+L21} with LOBPCG (without preconditioning) to solve the SEP there, and
  \item \Cref{alg:NEPvLOCG} (the acceleration version) which calls \Cref{alg:NEPvSCF4+L21} with {\tt eig} to solve each reduced
problem~\eqref{eq:LOCGsub}.
\end{enumerate}
%1) \Cref{alg:NEPvSCF4+L21} with MATLAB's {\tt eig} to solve the SEP at its line 3,
%2) \Cref{alg:NEPvSCF4+L21} with LOBPCG (without preconditioning) to solve the SEP there, and
%3) \Cref{alg:NEPvLOCG} (the acceleration version) which calls \Cref{alg:NEPvSCF4+L21} with {\tt eig} to solve each reduced
%problem~\eqref{eq:LOCGsub}.
For each of the two problems with the saved set of $A$, $D$, and $P_0$, we will report numerical results on three different regularizing parameters $\alpha$ carefully selected to demonstrate: 1) no row sparsity, 2) emergence of row sparsity, and
3) complete row sparsity (exactly $k$ nontrivial rows in an optimizer),
as $\alpha$ increases to certain level.

In using LOBPCG, each time it is iterated to an accuracy that is fractionally better than
the current approximation $P^{(j)}$ to KKT \eqref{eq:KKT+L21:work-eps}. Specifically,
the SEP at Line 3 of \Cref{alg:NEPvSCF4+L21} is solved by LOBPCG to satisfy
\begin{equation}\label{eq:stop-LOBPCG}
\big\|H^{(j)}\what P^{(j)}-\what P^{(j)}\big(\big[\what P^{(j)}\big]^{\T}H^{(j)}\what P^{(j)}\big)\big\|_{\F}
     \le \frac 18\cdot\big\|\scrH_{\varepsilon_0}(P^{(j)})-P\Lambda_{\varepsilon_0}(P^{(j)})\big\|_{\F},
\end{equation}
where $\Lambda_{\varepsilon_0}(\cdot)$ is the same one as in \eqref{eq:stop-1}.
Ideally, $\what P^{(j)}$  should be computed  accurately just enough so that the
eventual $P^{(j+1)}$ is so good as if the SEP is solved
fully accurately in the working precision. But in general, that is hard to do, if at all possible,
because there is no simple way to find out what the just right accuracy in computed $\what P^{(j)}$ should have
for achieving the ideal outcome. Nonetheless,
\eqref{eq:stop-LOBPCG} seems to strike a good balance between accuracy and effort and it works well
in our experiments.
Similarly, in our implementation of \Cref{alg:NEPvLOCG}, each reduced
problem~\eqref{eq:LOCGsub}  is solved by \Cref{alg:NEPvSCF4+L21} with {\tt eig} just fractionally better
than the current $P^{(j)}$ as an approximate solution of $\OptSM_{2,1}^{\varepsilon}$~\eqref{eq:OptSTM+L21:eps}, as we commented in
\Cref{rk:SCF4npd+LOCG}(iv).

%By default, MATLAB's {\tt eig} is used for solving the SEP at Line 3 of \Cref{alg:NEPvSCF4+L21}.
%We also tested LOBPCG \cite{knya:2001} (without any preconditioning however) to solve the SEP approximately for that purpose
%but did not see much algorithmic behavior difference, except faster in speed.

\Cref{fig:MAXBET:SCF} shows the numerical results for the regularized MAXBET problem for
$\alpha=1,\, 10^4,\, 10^5$ by \Cref{alg:NEPvSCF4+L21} with {\tt eig}.
It can be observed that the objective (first row) monotonically increases
and the normalized KKT residual $\varepsilon_{\KKT}$ as defined in \eqref{eq:stop-1} goes towards $0$.
At $\alpha=1$, there is no visible row sparsity among the rows, but at $\alpha=10^4$, row sparsity begins to emerge
as the norms of quite a number of rows have made their ways to around $\varepsilon_0=10^{-3}\sqrt{k/n}=10^{-4}$, and finally
there are only $k=10$ rows standing out with norms approximately $1$.
It is noted that \Cref{alg:NEPvSCF4+L21} has an easy time for small and sufficiently large $\alpha$, but works hard for the intermediate $\alpha$. It is not clear if the behavior for $\alpha$ varying from small up to certain point can be generalized but the behavior for sufficiently large $\alpha$ is generalizable. The reason hides in the analysis
in \cref{sec:EffL21}. That is, for $\alpha$ sufficiently large, that $g(P)$ in both $\OptSM_{2,1}$~\eqref{eq:OptSTM+L21} and
$\OptSM_{2,1}^{\varepsilon}$~\eqref{eq:OptSTM+L21:eps} becomes negligible. As a consequence, effectively the associated optimization problem,
becomes the one for slightly perturbed $\|P\|_{2,1}$,
i.e.,
$$
\sum_{i=1}^n\sqrt{\tr(P^{\T}\be_i\be_i^{\T}P)+\varepsilon_0^2}.
$$
For this objective, by \Cref{thm:extreme-STM-pt:work-eps},
there are essentially $\binom nk$ equivalent classes of KKT points each of which has $k$ nontrivial rows and $n-k$ rows exactly $0$.
Therefore, it is rather easy for any iteration scheme to get quickly trapped into somewhere sufficiently near one of those KKT points for
$\alpha$ sufficiently large. Unfortunately, it is not good news for feature selection
because then selected features are essentially
obtained without input from the criterion as defined by $g(P)$ and highly sensitive with respect to many factors such as
different initial guesses, and even rounding errors during computations.

\setlength{\tabcolsep}{4pt}
\renewcommand{\arraystretch}{1.15}
\begin{table}
\caption{Performance statistics for regularized MAXBET}\label{tbl:MAXBET21}
\centerline{\scriptsize
\begin{tabular}{|c|c|c|c|c|c|c|c|c|c|}
  \hline
  & \multicolumn{3}{c|}{\Cref{alg:NEPvSCF4+L21} (with {\tt eig})}
  & \multicolumn{3}{c|}{\Cref{alg:NEPvSCF4+L21} (with LOBPCG)}
  & \multicolumn{3}{c|}{\Cref{alg:NEPvLOCG}} \\ \hline
$\alpha$ & $1$ & $10^4$ & $10^5$
         & $1$ & $10^4$ & $10^5$
         & $1$ & $10^4$ & $10^5$ \\ \hline
Obj. & $9.1\cdot 10^5$  & $9.2\cdot 10^4$ & $-9.2\cdot 10^5$
     & $9.1\cdot 10^5$  & $9.2\cdot 10^4$ & $-9.1\cdot 10^5$
     & $9.1\cdot 10^5$  & $9.2\cdot 10^4$ & $-9.0\cdot 10^5$ \\ \hline
CPU & $9.4\cdot 10^{-1}$ & $3.3$ & $2.3\cdot 10^{-1}$
    & $1.2\cdot 10^{-1}$ & $2.0$  & $2.9\cdot 10^{-1}$
    & $2.4\cdot 10^{-2}$ & $7.6\cdot 10^{-1}$ & $5.7\cdot 10^{-1}$\\ \hline
$\varepsilon_{\KKT}$ & $5.6\cdot 10^{-8}$ & $9.9\cdot 10^{-8}$ & $4.1\cdot 10^{-8}$
                     & $6.1\cdot 10^{-8}$ & $9.8\cdot 10^{-8}$ & $3.2\cdot 10^{-8}$
                     & $7.3\cdot 10^{-8}$ & $1.0\cdot 10^{-7}$ & $4.8\cdot 10^{-8}$ \\ \hline
it'ns & 23 & 84 & 5 & 23 & 86 & 10 & 7 & 459 & 320 \\
\hline
\end{tabular}
}
\end{table}

\Cref{fig:MAXBET:SCF-LOBPCG,fig:MAXBET:SCFacc} show the numerical results for the same problem as \Cref{fig:MAXBET:SCF},
except that \Cref{fig:MAXBET:SCF-LOBPCG} is by \Cref{alg:NEPvSCF4+L21}
with LOBPCG \cite{knya:2001} (without any preconditioning however) while
\Cref{fig:MAXBET:SCFacc} is by \Cref{alg:NEPvLOCG}, the accelerated version.
It is noted that for $\alpha=1$ \Cref{alg:NEPvLOCG}
indeed accelerates over \Cref{alg:NEPvSCF4+L21}: \Cref{alg:NEPvLOCG} takes $7$ outer iterations
whereas \Cref{alg:NEPvSCF4+L21} takes 23. Since each inner problem of \Cref{alg:NEPvLOCG} is of size $3k$,
\Cref{alg:NEPvLOCG} runs significantly faster (see \Cref{tbl:MAXBET21}), but
the normalized KKT residual $\varepsilon_{\KKT}$ from \Cref{alg:NEPvLOCG} for $\alpha=10^4,\, 10^5$ wobbles towards $0$.
Likely, this may be caused by the degradation in smoothness of regularized $g(P)$ for very large $\alpha$.
We emphasizes that the nontrivial rows of optimizers revealed in the last plots of
\Cref{fig:MAXBET:SCF,fig:MAXBET:SCF-LOBPCG,fig:MAXBET:SCFacc} are different, highlighting the point we made moments ago
on somewhat ``randomness in feature selection'' with too large $\alpha$.
In light of this, it is our recommendation not to use regularizing parameters $\alpha$ that is so large
for achieving complete row sparsity but rather one that leads to an emergence of row sparsity and then select $1.5k$ up to $2k$ features for any following up data science tasks.

%In fact, it is noted that sets of the row indices for the nontrivial rows associated with the last plots in
%\Cref{fig:MAXBET:SCF,fig:MAXBET:SCF-LOBPCG,fig:MAXBET:SCFacc} are all different.

%%%%%%%%%%%%%%%%%%%%%%%%%%%%%%%%%%%%%%%%%%%%%%%%%%%%%%%%%%%%%%%%%%%%%%%%%%%%%%%%%%%%%%%%%%%%%%%%%%%%%%%%%%%%%%%%%%%%%%%

\begin{figure}[t]
{\centering
\begin{tabular}{ccc}
\resizebox*{0.28\textwidth}{0.15\textheight}{\includegraphics{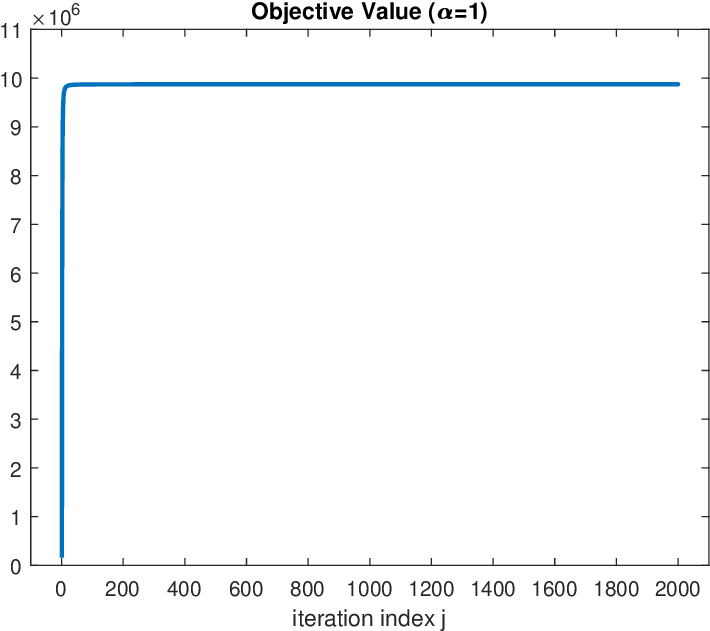}}
  & \resizebox*{0.28\textwidth}{0.15\textheight}{\includegraphics{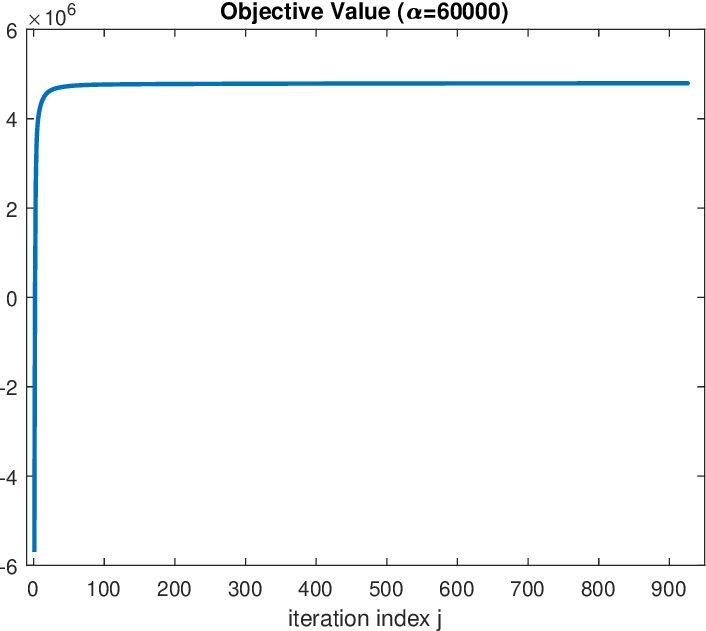}}
  & \resizebox*{0.28\textwidth}{0.15\textheight}{\includegraphics{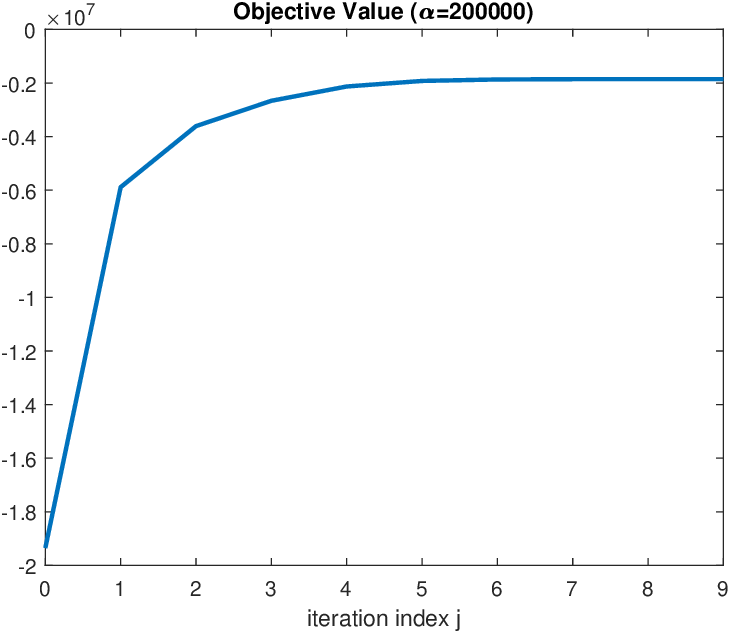}} \\
\resizebox*{0.28\textwidth}{0.15\textheight}{\includegraphics{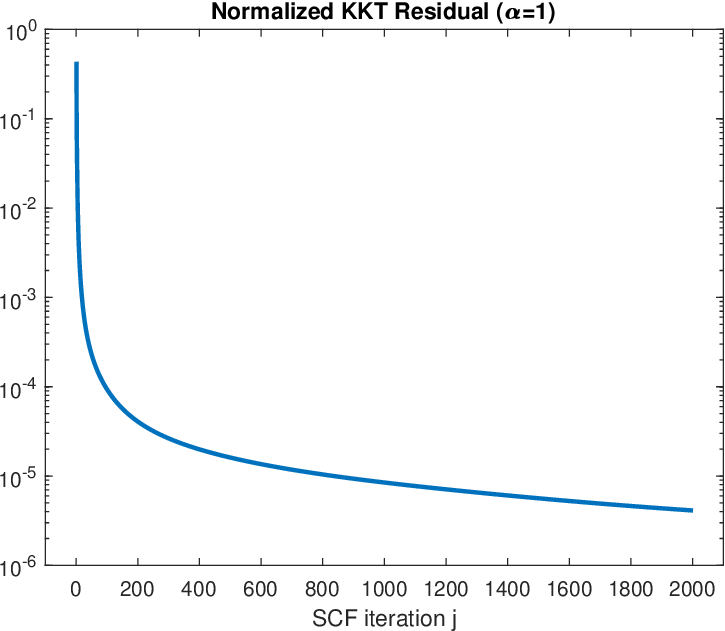}}
  & \resizebox*{0.28\textwidth}{0.15\textheight}{\includegraphics{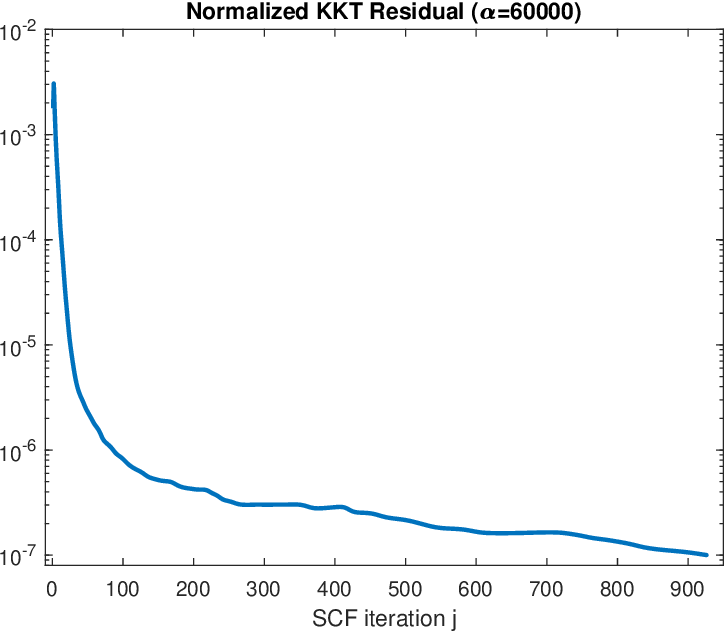}}
  & \resizebox*{0.28\textwidth}{0.15\textheight}{\includegraphics{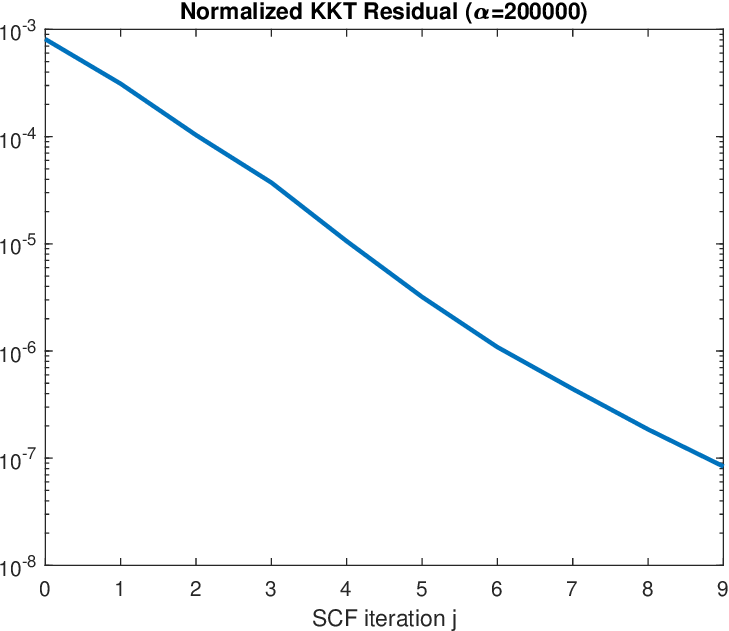}} \\
\resizebox*{0.28\textwidth}{0.15\textheight}{\includegraphics{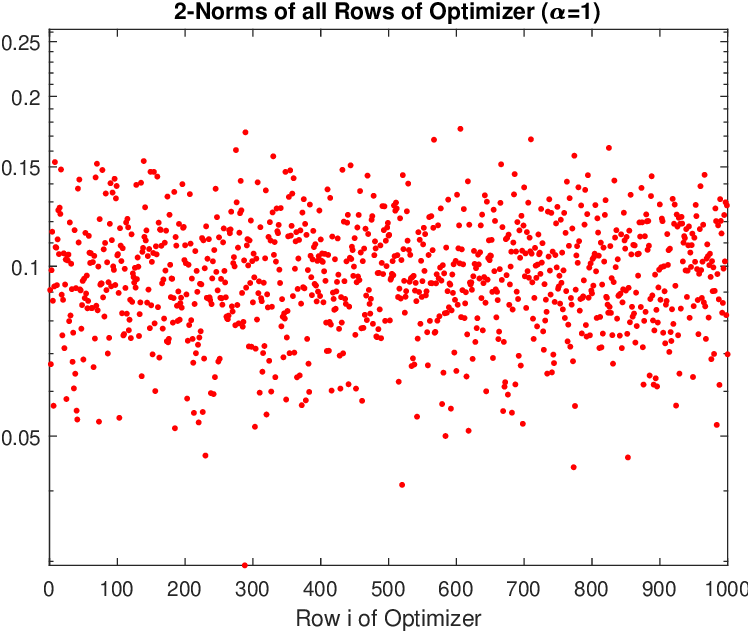}}
  & \resizebox*{0.28\textwidth}{0.15\textheight}{\includegraphics{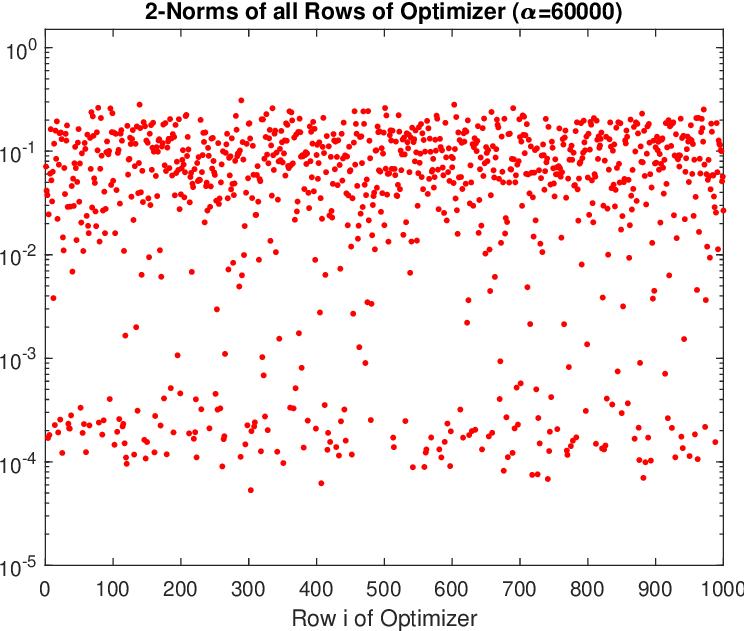}}
  & \resizebox*{0.28\textwidth}{0.15\textheight}{\includegraphics{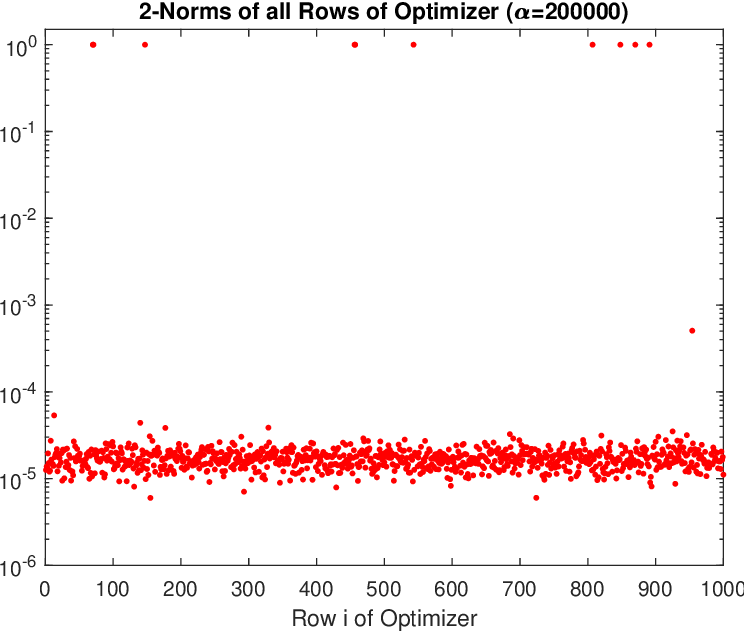}} \\
\end{tabular}\par
}
\vspace{-0.15 cm}
\caption{\small Regularized OCCA problem with $\alpha=1$ (1st column), $6\cdot 10^4$ (second column), and $2\cdot 10^5$ (third column) by
  SCF  (\Cref{alg:NEPvSCF4+L21} with {\tt eig}). Associated with the last plot, the nontrivial rows of computed maximizer $P$ are its row 70, 71, 147, 456, 457, 543, 807, 848, 870, and 891.
  %The last plot has 10 dots on par with   $10^0$ but the two dots for rows 70 and 71 and the ones for rows 456 and 457 collapse together.
%  It can be observed that the objective (durst row) monotonically increases
%  and the normalized KKT residual $\varepsilon_{\KKT}$ as defined in \eqref{eq:stop-1} goes towards $0$.
%  Smaller regularizing parameter $\alpha$ does not induce sparse rows. In fact,
%  row sparsity begins to show at $\alpha=10^4$ but completely emerges at $10^5$. It seems that there are just 9 dots on par with
%  $10^0$ in the last plot, but there are actually 10 dots due to that the two dots for rows 456 and 457 are visually inseparable.
  }
\label{fig:OCCA:SCF}
\end{figure}

%\iffalse
\begin{figure}[t]
{\centering
\begin{tabular}{ccc}
\resizebox*{0.28\textwidth}{0.15\textheight}{\includegraphics{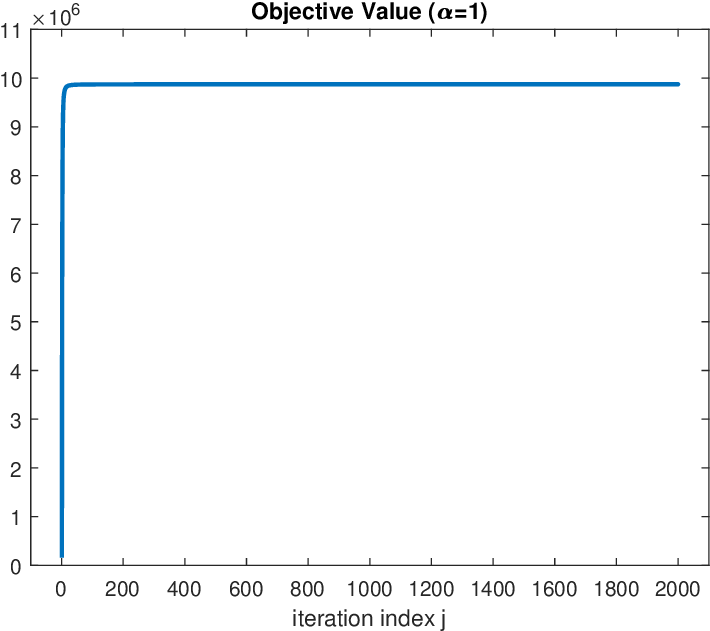}}
  & \resizebox*{0.28\textwidth}{0.15\textheight}{\includegraphics{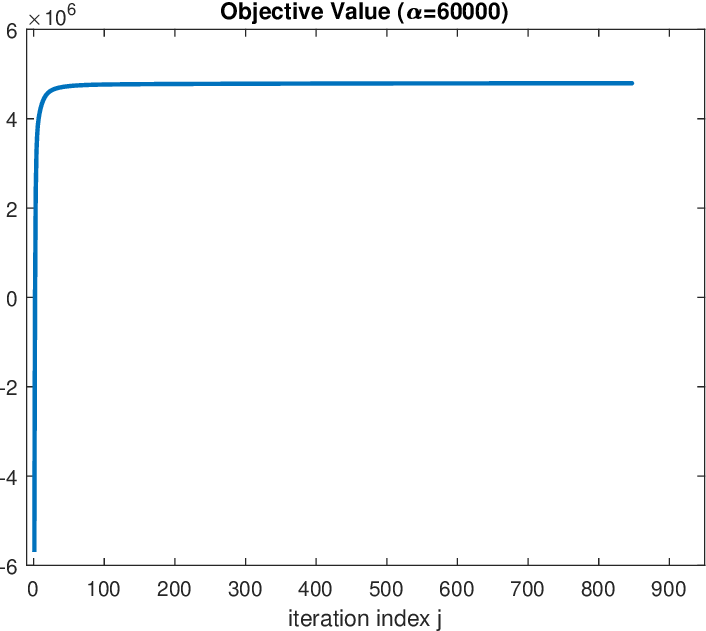}}
  & \resizebox*{0.28\textwidth}{0.15\textheight}{\includegraphics{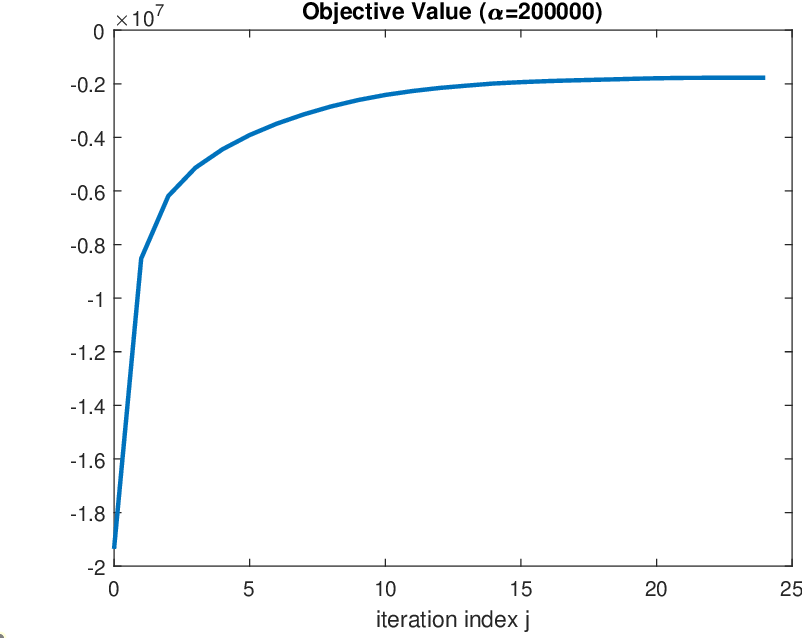}} \\
\resizebox*{0.28\textwidth}{0.15\textheight}{\includegraphics{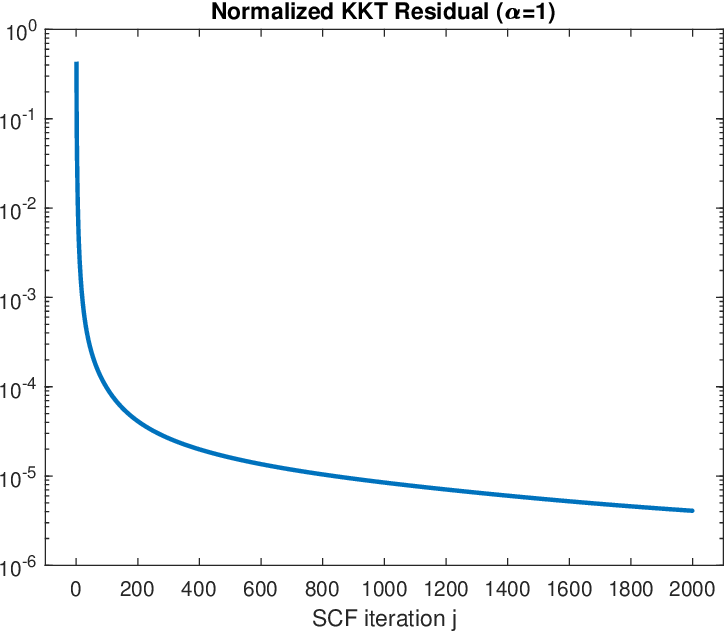}}
  & \resizebox*{0.28\textwidth}{0.15\textheight}{\includegraphics{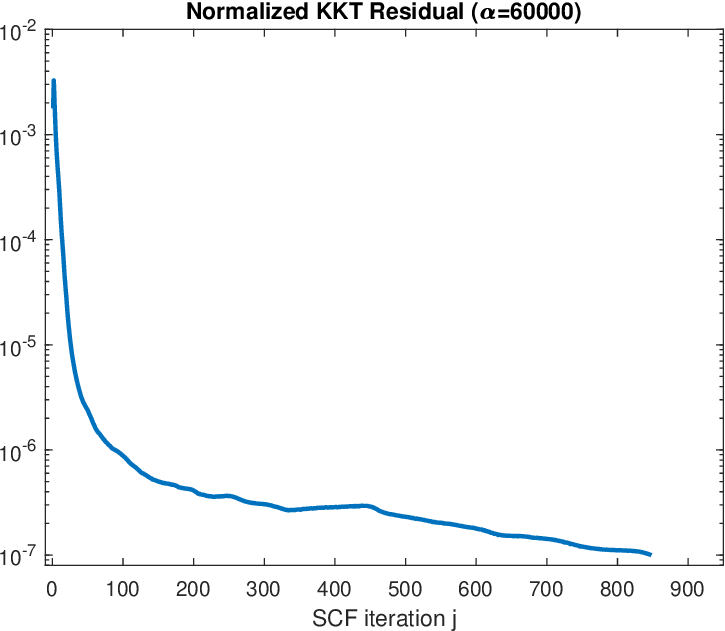}}
  & \resizebox*{0.28\textwidth}{0.15\textheight}{\includegraphics{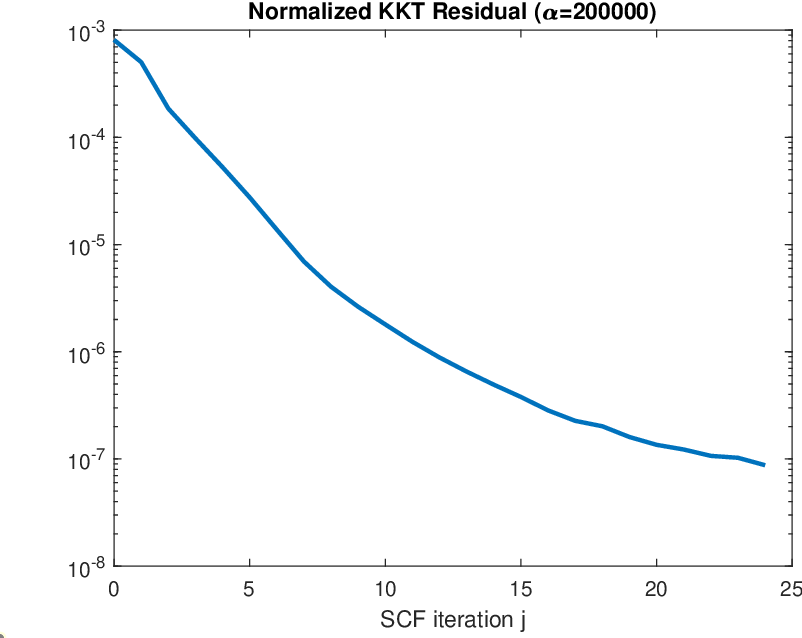}} \\
\resizebox*{0.28\textwidth}{0.15\textheight}{\includegraphics{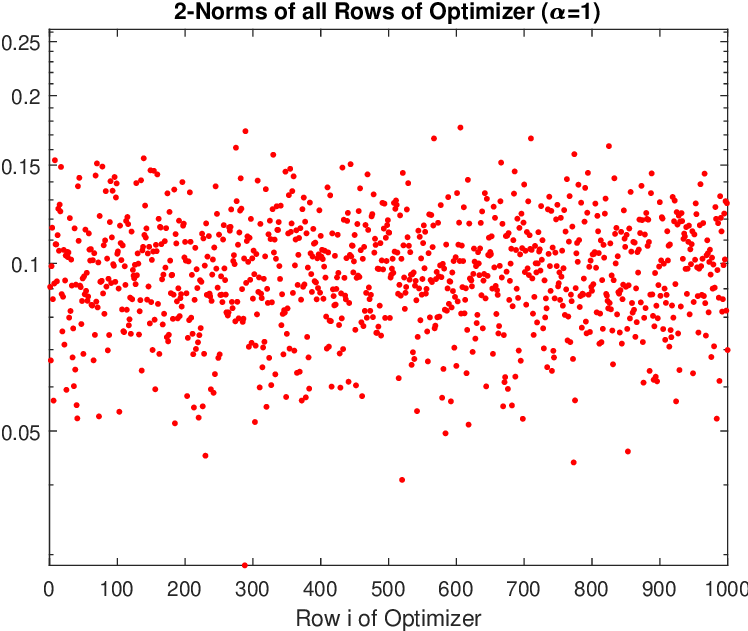}}
  & \resizebox*{0.28\textwidth}{0.15\textheight}{\includegraphics{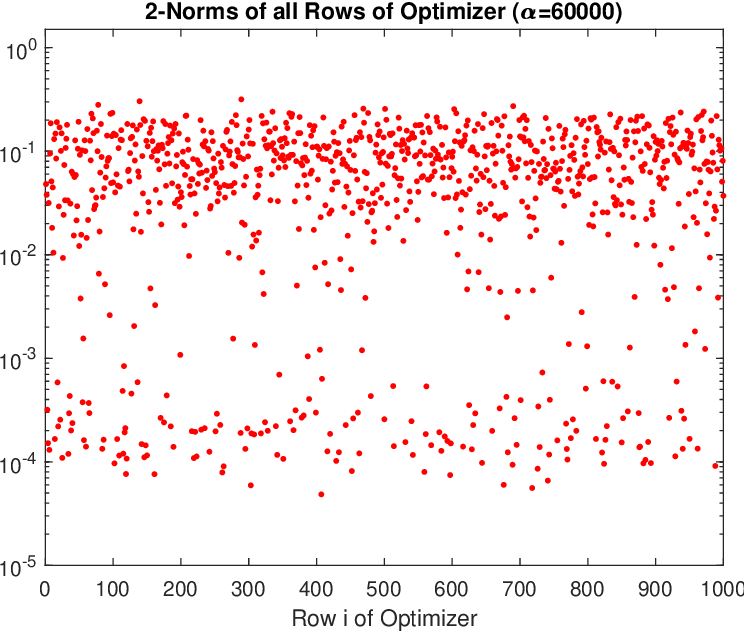}}
  & \resizebox*{0.28\textwidth}{0.15\textheight}{\includegraphics{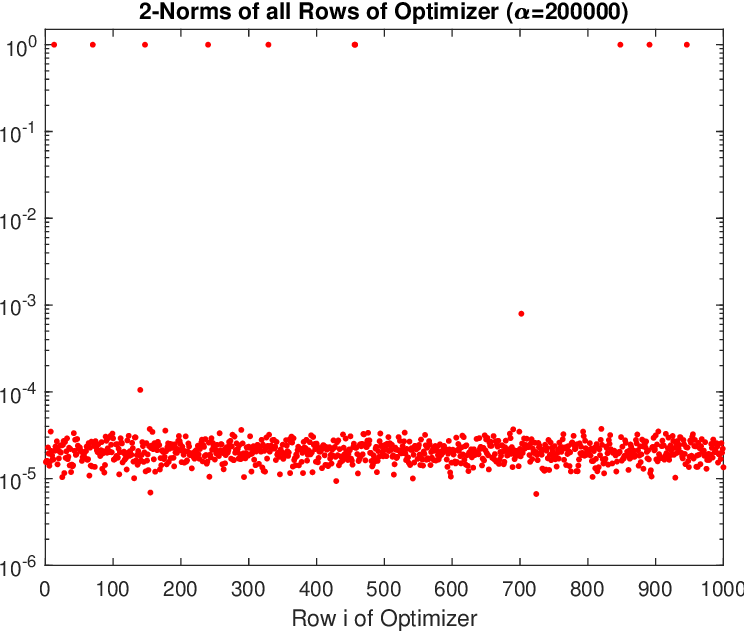}} \\
\end{tabular}\par
}
\vspace{-0.15 cm}
\caption{\small Regularized OCCA problem with $\alpha=1$ (1st column), $6\cdot 10^4$ (second column), and $2\cdot 10^5$ (third column) by
  SCF  (\Cref{alg:NEPvSCF4+L21} with LOBPCG but without any preconditioning). Associated with the last plot, the nontrivial rows of computed maximizer $P$ are its row 96, 289, 330, 521, 601, 690, 793, 849, 900, and 937.
%  It can be observed that the objective (durst row) monotonically increases
%  and the normalized KKT residual $\varepsilon_{\KKT}$ as defined in \eqref{eq:stop-1} goes towards $0$.
%  Smaller regularizing parameter $\alpha$ does not induce sparse rows. In fact,
%  row sparsity begins to show at $\alpha=10^4$ but completely emerges at $10^5$. It seems that there are just 9 dots on par with
%  $10^0$ in the last plot, but there are actually 10 dots due to that the two dots for rows 456 and 457 are visually inseparable.
  }
\label{fig:OCCA:SCF-LOBPCG}
\end{figure}
%\fi

\begin{figure}[t]
{\centering
\begin{tabular}{ccc}
\resizebox*{0.28\textwidth}{0.15\textheight}{\includegraphics{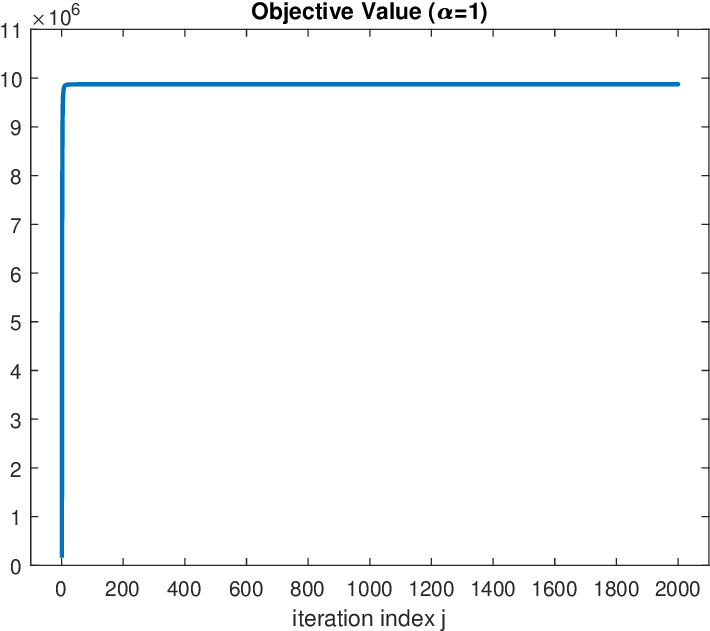}}
  & \resizebox*{0.28\textwidth}{0.15\textheight}{\includegraphics{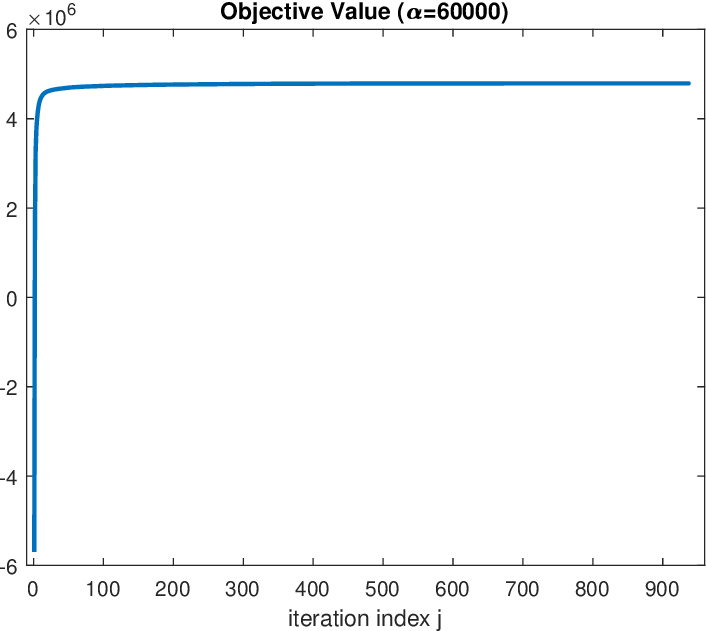}}
  & \resizebox*{0.28\textwidth}{0.15\textheight}{\includegraphics{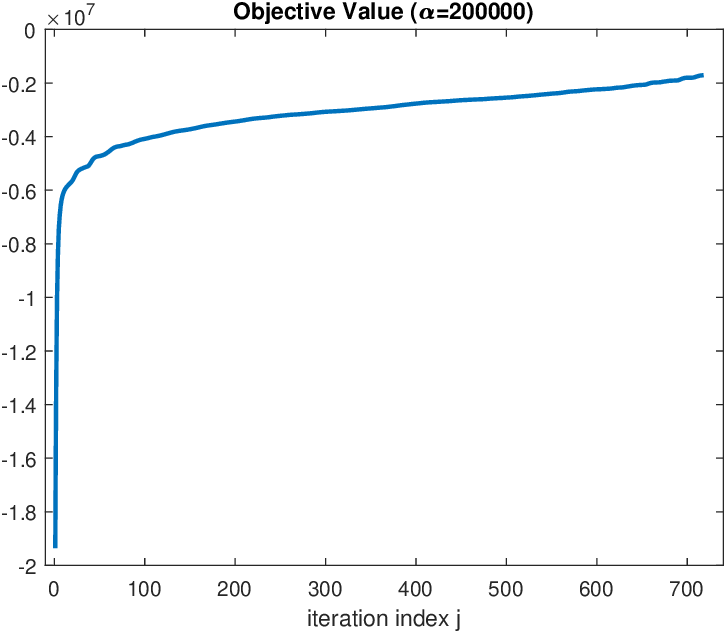}} \\
\resizebox*{0.28\textwidth}{0.15\textheight}{\includegraphics{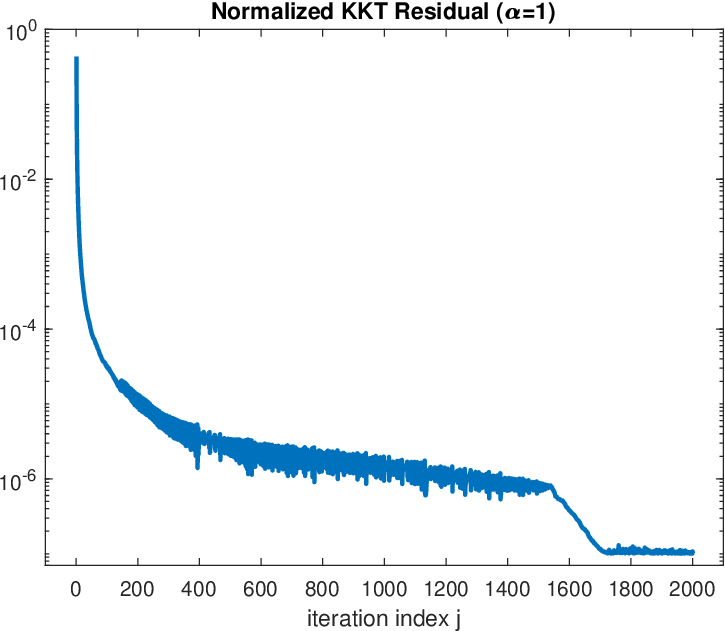}}
  & \resizebox*{0.28\textwidth}{0.15\textheight}{\includegraphics{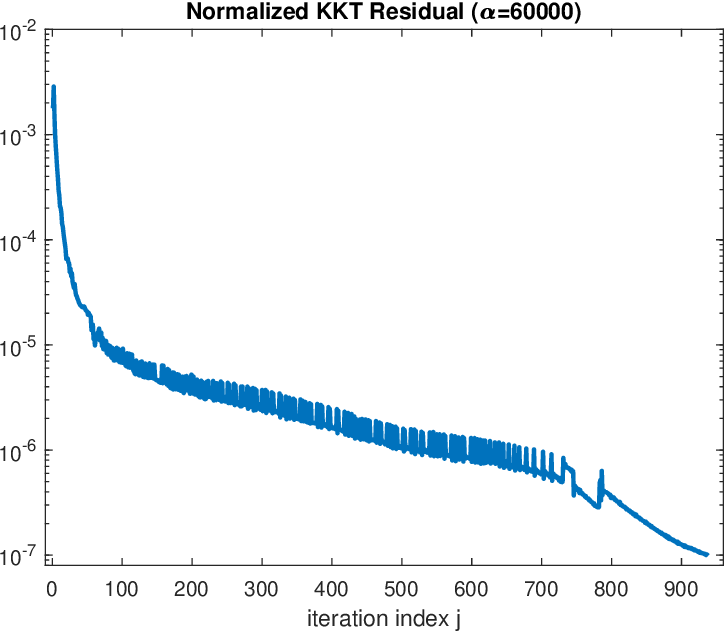}}
  & \resizebox*{0.28\textwidth}{0.15\textheight}{\includegraphics{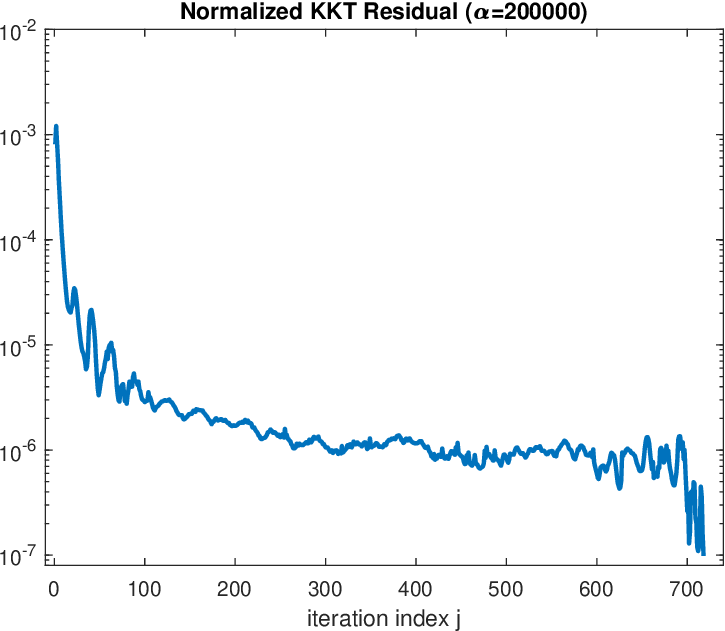}} \\
\resizebox*{0.28\textwidth}{0.15\textheight}{\includegraphics{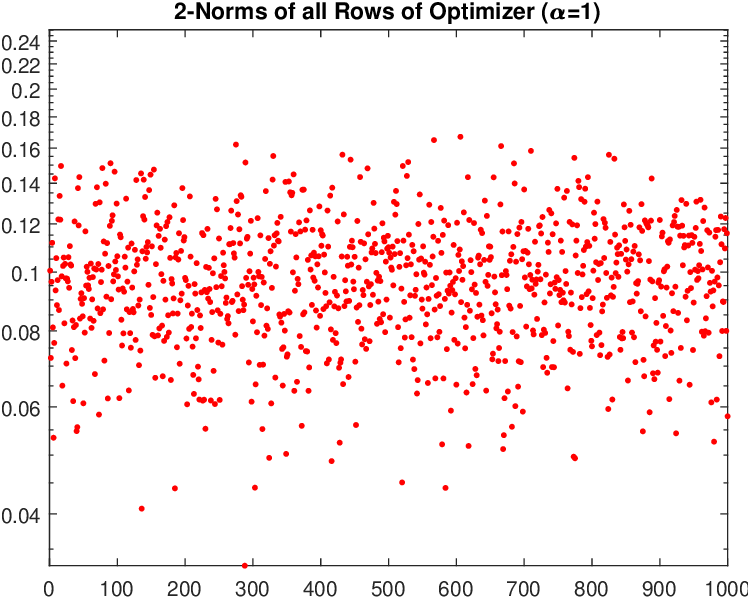}}
  & \resizebox*{0.28\textwidth}{0.15\textheight}{\includegraphics{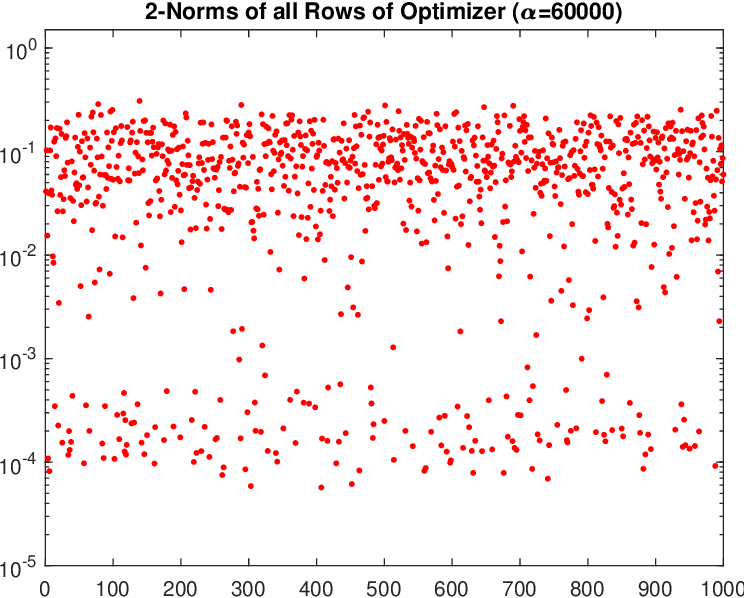}}
  & \resizebox*{0.28\textwidth}{0.15\textheight}{\includegraphics{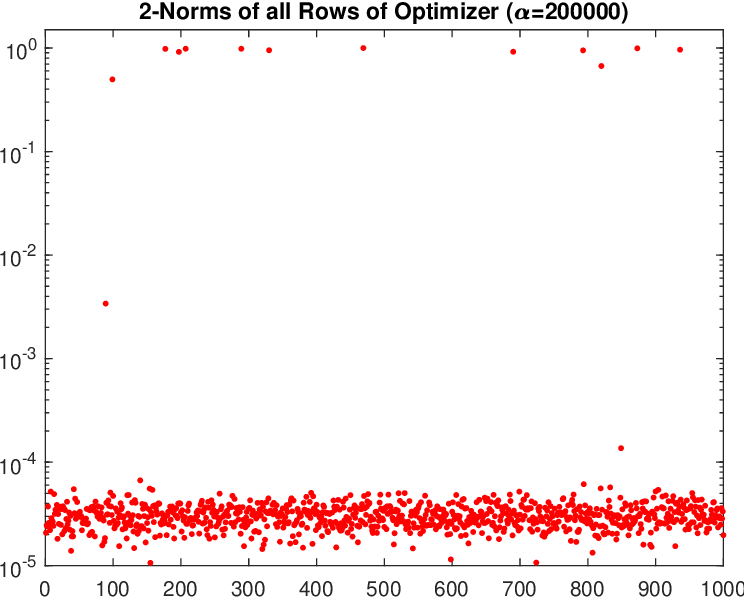}} \\
\end{tabular}\par
}
\vspace{-0.15 cm}
\caption{\small Regularized OCCA problem with $\alpha=1$ (1st column), $6\cdot 10^4$ (second column), and $2\cdot 10^5$ (third column) by
  SCF with acceleration (\Cref{alg:NEPvLOCG}). Associated with the last plot, the nontrivial rows of computed maximizer $P$ are its row 177, 197, 207, 289, 330, 469, 690, 793, 873, and 936.
%  It can be observed that the objective (durst row) monotonically increases
%  and the normalized KKT residual $\varepsilon_{\KKT}$ as defined in \eqref{eq:stop-1} goes towards $0$.
%  Smaller regularizing parameter $\alpha$ does not induce sparse rows. In fact,
%  row sparsity begins to show at $\alpha=10^4$ but completely emerges at $10^5$. It seems that there are just 9 dots on par with
%  $10^0$ in the last plot, but there are actually 10 dots due to that the two dots for rows 456 and 457 are visually inseparable.
  }
\label{fig:OCCA:SCFacc}
\end{figure}

\Cref{tbl:MAXBET21} collects performance statistics:
objective value, CPU time, KKT residual, and (outer) iteration number,
associated with the numerical experiments shown in
\Cref{fig:MAXBET:SCF,fig:MAXBET:SCF-LOBPCG,fig:MAXBET:SCFacc}. While all three methods delivered comparable
normalized KKT residual at the end, it exposes three additional phenomena that are hard to tell or cannot be told
by the plots:
\begin{itemize}
  \item At $\alpha=1$ and $10^4$, respectively, all algorithms deliver pretty much the same objective value, but not at $\alpha=10^5$ at which the regularizing term dominates  and leads to complete row sparsity but different
       implementations/methods produce different optimizers and,
      as a result, different selected features,
      confirming the ``randomness in feature selection'' with too large $\alpha$.
  \item At $\alpha=1$ and $10^4$, \Cref{alg:NEPvSCF4+L21} with LOBPCG beats \Cref{alg:NEPvSCF4+L21} with {\tt eig} in CPU time
        but not at $\alpha=10^5$ again, ay which the former takes 10 outer iterations, twice as many as the latter.
  \item At $\alpha=1$ and $10^4$,  \Cref{alg:NEPvLOCG} beats the two variants of \Cref{alg:NEPvSCF4+L21} in CPU time but not at $\alpha=10^5$ again. % at which the regularizing term dominates.
      It turns out that, at $\alpha=10^5$,
      \Cref{alg:NEPvLOCG} takes about 45 and 64 times as many as the numbers of iterations by both variants of \Cref{alg:NEPvSCF4+L21}, respectively.
      We also note that, at $\alpha=10^4$, \Cref{alg:NEPvLOCG}  takes about just 5 times as many as the numbers of iterations by the variants, but \Cref{alg:NEPvLOCG} still wins in CPU time due to the fact that
      each inner iteration of \Cref{alg:NEPvLOCG} works with a problem of size $3k=30$ instead of $n=1000$.
\end{itemize}

Next we get to the regularized OCCA problem.
\Cref{fig:OCCA:SCF,fig:OCCA:SCF-LOBPCG,fig:OCCA:SCFacc} show the numerical results for
$\alpha=1,\, 6\cdot 10^4,\, 2\cdot 10^5$ with  \Cref{fig:OCCA:SCF} for \Cref{alg:NEPvSCF4+L21} with {\tt eig},
\Cref{fig:OCCA:SCF-LOBPCG} for \Cref{alg:NEPvSCF4+L21}  with LOBPCG \cite{knya:2001} (without any preconditioning however), and finally,
\Cref{fig:OCCA:SCFacc} for \Cref{alg:NEPvLOCG}.
While there are many similar things to what we have seen for the regularized MAXBET problem,
 we notice that at $\alpha=1$ \Cref{alg:NEPvSCF4+L21} takes 2000 SCF iterations
without reducing the normalized KKT residual to $10^{-7}$, but it succeeds at other two $\alpha$.
Once again, \Cref{alg:NEPvSCF4+L21} has an easier time for sufficient larger $\alpha$ than much smaller $\alpha$ for the same reason as we explained before.
Also the sets of the row indices for the nontrivial rows associated with the last plots in
\Cref{fig:OCCA:SCF,fig:OCCA:SCF-LOBPCG,fig:OCCA:SCFacc} are different.

Finally, \Cref{tbl:OCCA21} collects performance statistics associated with the numerical experiments shown in
\Cref{fig:OCCA:SCF,fig:OCCA:SCF-LOBPCG,fig:OCCA:SCFacc}. Our earlier observations from \Cref{tbl:MAXBET21}
remain true, unsurprisingly.

\setlength{\tabcolsep}{4pt}
\renewcommand{\arraystretch}{1.25}
\begin{table}[b]
\caption{Performance statistics for regularized OCCA}\label{tbl:OCCA21}
\centerline{\scriptsize
\begin{tabular}{|c|c|c|c|c|c|c|c|c|c|}
  \hline
  & \multicolumn{3}{c|}{\Cref{alg:NEPvSCF4+L21} (with {\tt eig})}
  & \multicolumn{3}{c|}{\Cref{alg:NEPvSCF4+L21} (with LOBPCG)}
  & \multicolumn{3}{c|}{\Cref{alg:NEPvLOCG}} \\ \hline
$\alpha$ & $1$ & $6\cdot 10^4$ & $2\cdot 10^5$
         & $1$ & $6\cdot 10^4$ & $2\cdot 10^5$
         & $1$ & $6\cdot 10^4$ & $2\cdot 10^5$ \\ \hline
Obj. & $9.9\cdot 10^6$  & $4.8\cdot 10^6$ & $-1.9\cdot 10^6$
     & $9.9\cdot 10^6$  & $4.8\cdot 10^6$ & $-1.8\cdot 10^6$
     & $9.9\cdot 10^6$  & $4.8\cdot 10^6$ & $-1.7\cdot 10^6$ \\ \hline
CPU & $7.8\cdot 10^1$ & $3.5\cdot 10^1$ & $3.9\cdot 10^{-1}$
    & $1.2\cdot 10^1$ & $1.5\cdot 10^1$ & $5.6\cdot 10^{-1}$
    & $3.4\cdot 10^1$ & $1.5$           & $1.1$\\ \hline
$\varepsilon_{\KKT}$
    & $4.1\cdot 10^{-6}$ & $1.0\cdot 10^{-7}$ & $8.3\cdot 10^{-8}$
    & $4.1\cdot 10^{-6}$ & $1.0\cdot 10^{-7}$ & $8.1\cdot 10^{-8}$
    & $1.0\cdot 10^{-7}$ & $1.0\cdot 10^{-7}$ & $7.6\cdot 10^{-8}$ \\ \hline
it'ns & 2000 & 926 & 9 & 2000 & 847 & 24 & 2000 & 937 & 717\\
\hline
\end{tabular}
}
\end{table}

\clearpage
\section{Conclusion}\label{sec:concl}
%Linear dimensionality reduction is about computing a proper projection matrix $P$ that optimizes
%certain suitably learning objective constructed from given sample data points.
%When $P$ is restricted to the Stiefel manifold, such as PCA and
%orthogonal LDA, an optimization problem on the Stiefel manifold arises. Further, when the objective is properly regularized
%by the matrix $(2,1)$-norm, optimizer $P$ may have numerous negligible rows whose corresponding features in
%the data points may be regarded irrelevant for the purpose of feature selection. However,
%optimization on the Stiefel manifold is usually difficult to deal with numerically, and that combined with the matrix $(2,1)$-norm which is nonsmooth results in even more challenging optimization problems.

Optimization on Stiefel manifold with the $(2,1)$-norm regularization can lead to row-sparse projections
in linear dimensionality reduction for the purpose of feature selections. In this paper,
we have developed a unifying NEPv framework for effectively handling such optimization problems, both theoretically and numerically.
Our success in building such a unifying framework critically relies  on an observation that the matrix $(2,1)$-norm $\|P\|_{2,1}$
can be reformulated as
$$
\|P\|_{2,1}=\sum_{i=1}^n\sqrt{\tr(P^{\T}\be_i\be_i^{\T}P)},
$$
a concave composition of matrix trace functions, which makes it possible to utilize the recent NEPv development in
\cite{li:2024}. In that regards, we point out that what we have done so far in this paper can be extended straightforwardly to
\begin{equation}\label{eq:OptSTM+L21-p}
\max_{P\in\bbO^{n\times k}}\Big\{f(P):=g(P)-\sum_{i=1}^M\alpha_i\big[\tr(P^{\T}A_iP)\big]^p\Big\},
\end{equation}
where $g(P)$ is the same as before, $0<p\le 1$, $\alpha_i>0$ and $\bbR^{n\times n}\ni A_i\succeq 0$ for $1\le i\le M$.
Evidently, $\OptSM_{2,1}$~\eqref{eq:OptSTM+L21} is a special case of \eqref{eq:OptSTM+L21-p}, which can be viewed as
objective $g(P)$ regularized by $\big[\tr(P^{\T}A_iP)\big]^p$ for $1\le i\le M$ with $M$
regularizing parameters $\alpha_i$.

It is explained that how the framework can be used to optimize today's commonly used objectives, such as
PCA, LDA, OCCA, MAXBET and the unbalanced Procrustes problem, etc, combined with the $(2,1)$-norm regularization
to yield row-sparse projections.
Preliminary numerical experiments are reported on regularized OCCA and MAXBET for illustration purposes.
These experiments reveal that too large regularizing parameters will nicely lead to complete row-sparse
optimizers but may  overlook any effect from the original objective $g(P)$ and, as a result,
any such complete row-sparse optimizer may not be practically useful. It is suggested that
a proper regularizing parameter should be the ones that result in emergence of row-sparsity, but exactly how to
select such parameters  warrants further investigations.

Throughout the article, we limit ourselves to  the field of real numbers because most optimization on the Stiefel manifold from machine learning dominantly falls into. But our developments can be extended to
the field of complex numbers, with minor modifications: replacing
        transpose $(\cdot)^{\T}$ with conjugate transpose $(\cdot)^{\HH}$ and some functions with
their real parts, such as $\tr((P^{\T}D)^m)$ with $\Re(\tr((P^{\HH}D)^m))$
        (where $\Re(\cdot)$ takes the real part of a complex number).

\clearpage
{\small
\def\noopsort#1{}\def\l{\char32l}\def\v#1{{\accent20 #1}}
  \let\^^_=\v\def\hbk{hardback}\def\pbk{paperback}

}

\end{document}